\theoremstyle{plain}
\newtheorem{theorem}{Theorem}[section]
\newtheorem{lemma}[theorem]{Lemma}
\newtheorem{proposition}[theorem]{Proposition}
\theoremstyle{definition}
\newtheorem{example}[theorem]{Example}
\newtheorem{remark}[theorem]{Remark}
\newtheorem{notation}[theorem]{Notation}
\newcommand{\e}{\mathrm{e}}
\begin{document}

\title{On the asymptotics of the $\alpha$-Farey transfer operator}

\thanks{The first two authors were supported by the German Research Foundation (DFG) grant 
\textit{Renewal Theory and Statistics of Rare Events in Infinite Ergodic Theory} ({Gesch{\"a}ftszeichen} KE 1440/2-1).}

\author{J.~Kautzsch}
\address{Fachbereich Mathematik, Universit\"at Bremen, 28359 Bremen, Germany}
\email{kautzsch@math.uni-bremen.de}

\author{M.~Kesseb\"ohmer}
\address{Fachbereich Mathematik, Universit\"at Bremen, 28359 Bremen, Germany}
\email{mhk@math.uni-bremen.de}

\author{T.~Samuel}
\address{Fachbereich Mathematik, Universit\"at Bremen, 28359 Bremen, Germany}
\email{tony@math.uni-bremen.de}

\author{B.~O.~Stratmann}
\address{Fachbereich Mathematik, Universit\"at Bremen, 28359 Bremen, Germany}
\email{bos@math.uni-bremen.de}

\maketitle

\begin{abstract}
We study the asymptotics of iterates of the transfer operator for non-uniformly hyperbolic $\alpha$-Farey maps.  We provide a family of observables which are Riemann integrable, locally constant and of bounded variation, and for which the iterates of the transfer operator, when applied to one of these observables, is not asymptotic to a constant times the wandering rate on the first element of the partition $\alpha$.  Subsequently, sufficient conditions on observables are given under which this expected asymptotic holds. In particular, we obtain an extension theorem which establishes that, if the asymptotic behaviour of iterates of the transfer operator is known on the first element of the partition $\alpha$, then the same asymptotic holds on any compact set bounded away from the indifferent fixed point.
\end{abstract}



\section{Introduction}\label{intro}

Expanding maps of the unit interval have been widely studied in the last decades and the associated transfer operators have proven to be of vital importance in solving problems concerning the statistical behaviour of the underlying interval maps \cite{B:2000,M:1991}.

In recent years an increasing amount of  interest has developed in maps which are expanding everywhere except on an unstable fixed point (that is, an indifferent fixed point)  at which trajectories are considerably slowed down.  This leads to an interplay of chaotic and regular dynamics, a characteristic of intermittent systems \cite{PM:1980,S:1988}.  From an ergodic theory viewpoint, this phenomenon leads to an absolutely continuous invariant measure having infinite mass.  Therefore, standard methods of ergodic theory cannot be applied in this setting; indeed it is wellknown that Birkhoff's ergodic theorem does not hold under these circumstances, see for instance \cite{JA:1997}.

In this paper we will be concerned with $\alpha$-Farey maps, see Figure~\ref{Fig1}.  These maps are of great interest since they are piecewise linear and expanding everywhere except for  at the indifferent fixed point where they have (right) derivative one. This makes the $\alpha$-Farey maps a simple model for studying the physical phenomenon of intermittency \cite{PM:1980}.  Moreover, an induced version of the $\alpha$-Farey maps are given by the $\alpha$-L\"uroth maps introduced in \cite{Lu:1883}, which have significant meaning in number theory, see for instance \cite{BBDK:1996,KMS:2012}.  

Thaler \cite{T:2000} was the first to discern the asymptotics of the transfer operator of a class of interval maps preserving an infinite measure.  This class of maps, to which the $\alpha$-Farey maps do not belong, have become to be known as Thaler maps.  In an effort to generalise this work, by combining  renewal theoretical arguments and functional analytic techniques, a new approach to estimate the decay of correlation of a dynamical system was achieved by Sarig \cite{S:2002}. Subsequently, Gou\"{e}zel \cite{G:2004,G:2005,G:2011} generalised these methods.  Using these ideas and employing the methods of Garsia and Lamperti \cite{GL:1969}, Erickson \cite{E:1970} and Doney \cite{D:1997}, recently Melbourne and Terhesiu \cite[Theorem 2.1 to 2.3]{MT:2011} proved a landmark result on the asymptotic rate of convergence of iterates of the induced transfer operator and showed that these result can be applied to Gibbs-Markov maps, Thaler maps, AFN maps, and Pomeau-Manneville maps.  Thus, the question which naturally arises is, whether this asymptotic rate can be related to the asymptotic rate of convergence of iterates of the actual transfer operator.  The results of this paper give some positive answers to this question for $\delta$-expansive $\alpha$-Farey maps.

As mentioned above, in this paper, we will consider the $\alpha$-Farey map $F_{\alpha} \colon [0, 1] \to [0, 1]$, which is given for a countable infinite partition $\alpha \coloneqq \{ A_{n} : n \in \mathbb{N}\}$ of $(0, 1)$ by non-empty intervals $A_{n}$.  It is assumed throughout that the atoms of $\alpha$ are ordered from right to left, starting with $A_{1}$, and that these atoms only accumulate at zero.  Further, we assume that $A_{n}$ is right-open and left-closed, for all natural numbers $n$.  We define the $\alpha$-\textit{Farey map}  $F_{\alpha} \colon [0, 1] \to [0, 1]$ by
\[
F_{\alpha}(x) \coloneqq \begin{cases}
(1-x)/a_{1} & \text{if} \; x \in \overline{A}_{1} \coloneqq A_{1} \cup \{ 1 \},\\
a_{n-1}(x-t_{n+1})/a_{n}+t_{n} & \text{if} \; x \in A_{n}, \; \text{for} \; n \geq 2,\\
0 & \text{if} \; x = 0,
\end{cases}
\]
where $a_{n}$ is equal to the Lebesgue measure $\lambda(A_{n})$ of the atom $A_{n} \in \alpha$ and $t_{n} \coloneqq \sum_{k = n}^{\infty} a_{k}$ denotes the Lebesgue measure of the $n$-th tail $\bigcup_{k = n}^{\infty} A_{k}$ of $\alpha$, see Figure~\ref{Fig1}.  Throughout, we will assume that the partition $\alpha$ satisfies the condition that the sequence $( t_{n} )_{n \in \mathbb{N}}$ is not summable.  For $\delta \in (0, 1]$, an $\alpha$-Farey map $F_{\alpha}$ is said to be \textit{$\delta$-expansive} if the sequence $(a_{n})_{n\in\mathbb{N}}$ is regularly varying of order $- (1 + \delta)$, that is, if there exists a slowly varying function $l \colon \mathbb{R} \to \mathbb{R}$ such that $a_{n} = \delta l(n) n^{-(1+\delta)}$, for all $n \in \mathbb{N}$.  (Recall that $l \colon [a, \infty) \to \mathbb{R}$ is called a \textit{slowly varying function}, if it is measurable, locally Riemann integrable and $\lim_{x \to \infty} l(\eta x)/l(x) = 1$, for each $\eta > 0$ and for some $a \in \mathbb{R}$, see \cite{BGT:1987,Seneta:1976} for  further details.)  In this situation, \cite[Theorem 1.5.10]{BGT:1987} implies that
\[
\lim_{n \to \infty} \frac{l(n) n^{-\delta}}{t_{n}}
= \lim_{n \to \infty} \frac{l(n) n^{-\delta}}{\sum_{k = n}^{\infty} a_{k}}
= \lim_{n \to \infty} \frac{l(n) n^{-\delta}}{\sum_{k = n}^{\infty} \delta l(n) n^{-(1+\delta)}}
= 1.
\]
Therefore, the Lebesgue measure of the $n$-th tail of $\alpha$ is asymptotic to a regularly varying function of order $-\delta$.  Thus, $\delta$-expansive implies expansive of order $\delta$ in the sense of \cite{KMS:2012}.  However, an expansive $\alpha$-Farey map of order $\delta$ is not necessarily $\delta$-expansive.

 \begin{figure}[htbp]
 \begin{center}
 \subfloat[$\delta = 65/128$.]{
 \scalebox{0.5}{
 \includegraphics{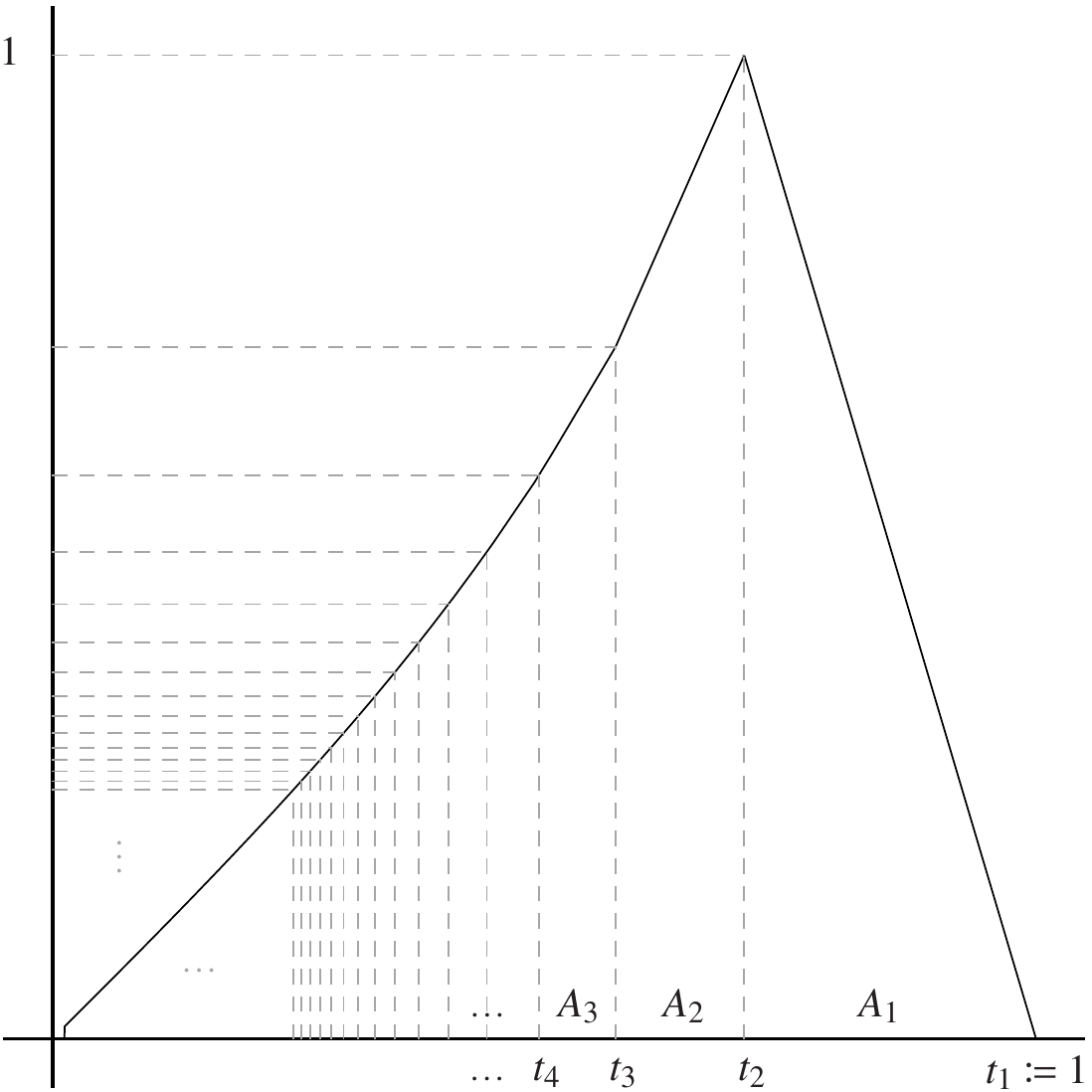}
 }}
 \hspace{0.5em}
 \subfloat[$\delta = 1$.]{
 \scalebox{0.5}{
 \includegraphics{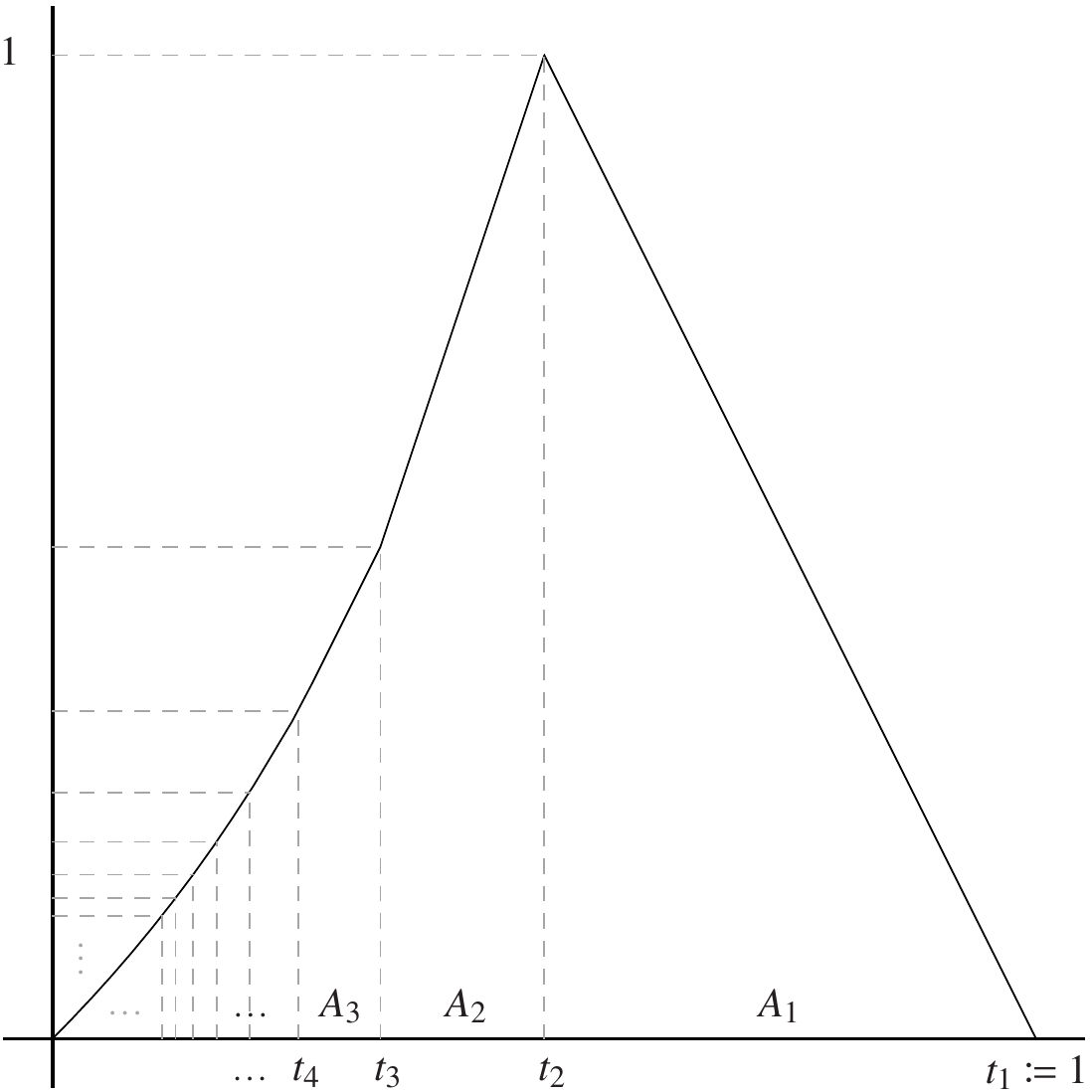}
 }}
 \end{center}
 \caption{The $\alpha$-Farey map, where $t_{n} = n^{-\delta}$, for all $n \in \mathbb{N}$.}
 \label{Fig1}
 \end{figure}

Throughout, let $\mu_{\alpha}$ denote the $F_{\alpha}$-invariant measure which is determined by
\begin{equation}\label{eq:RND}
h_{\alpha} \coloneqq \frac{\mathrm{d}\mu_{\alpha}}{\mathrm{d}\lambda} = \sum_{n \in \mathbb{N}} \frac{t_{n}}{a_{n}} \mathds{1}_{A_{n}}
\end{equation}
and let $\mathscr{B}$ denote the Borel $\sigma$-algebra of $[0, 1]$.  Here and in the sequel, for a given Borel set $B \in \mathscr{B}$, we let $\mathds{1}_{B}$ denote the indicator function on $B$.  It is verified in \cite{KMS:2012} that, since the sequence $(a_{n})_{n\in\mathbb{N}}$ is regularly varying of order $-(1+\delta)$, the map $F_{\alpha}$ is conservative, ergodic and measure preserving on the infinite and $\sigma$-finite measure space $([0, 1], \mathscr{B}, \mu_{\alpha})$.  The dynamical system $( [0, 1], \mathscr{B}, \mu_{\alpha}, F_{\alpha})$ will be referred to as a \textit{$\alpha$-Farey system}.  

Following the definitions and notations of \cite{Billingsley:1995}, throughout, we let $\mathcal{L}_{\mu_{\alpha}}^{1}([0, 1])$ (respectively $\mathcal{L}_{\lambda}^{1}([0, 1])$) denote the class of measurable functions $f$ with domain $[0, 1]$ for which  $\lvert f \rvert$ is $\mu_{\alpha}$-integrable (respectively $\lambda$-integrable), and for $f \in \mathcal{L}_{\mu_{\alpha}}^{1}([0, 1])$ (respectively $\mathcal{L}_{\lambda}^{1}([0, 1])$), define $\lVert f \rVert_{\mu_{\alpha}}$ (respectively $\lVert f \rVert_{\lambda}$) by
\[
\lVert f \rVert_{\mu_{\alpha}} \coloneqq \int \lvert f \rvert \; \mathrm{d}\mu_{\alpha} \quad  \left(\text{respectively} \; \lVert f \rVert_{\lambda} \coloneqq \int \lvert f \rvert \; \mathrm{d}\lambda \right).
\]
Further, for a given measurable function $w$, we set $\lVert w \rVert_{\infty} \coloneqq \sup_{x \in [0, 1]} \rvert w(x) \lvert$.

 \begin{figure}[htbp]
 \begin{center}
 \subfloat[$\delta = 65/128$.]{
 \scalebox{0.48}{
 \includegraphics{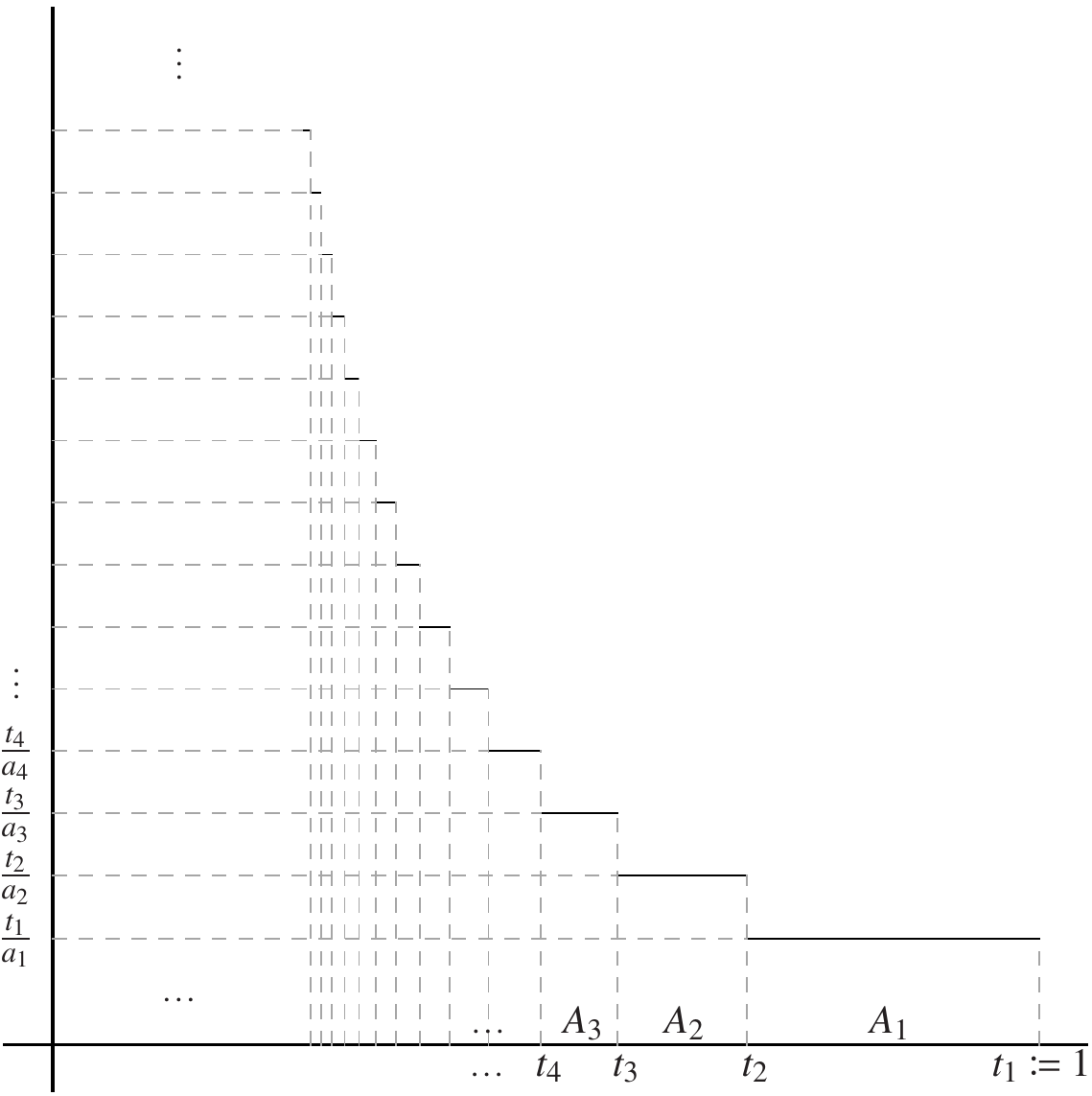}
 }}
 \hspace{0.5em}
 \subfloat[$\delta = 1$.]{
 \scalebox{0.48}{
  \includegraphics{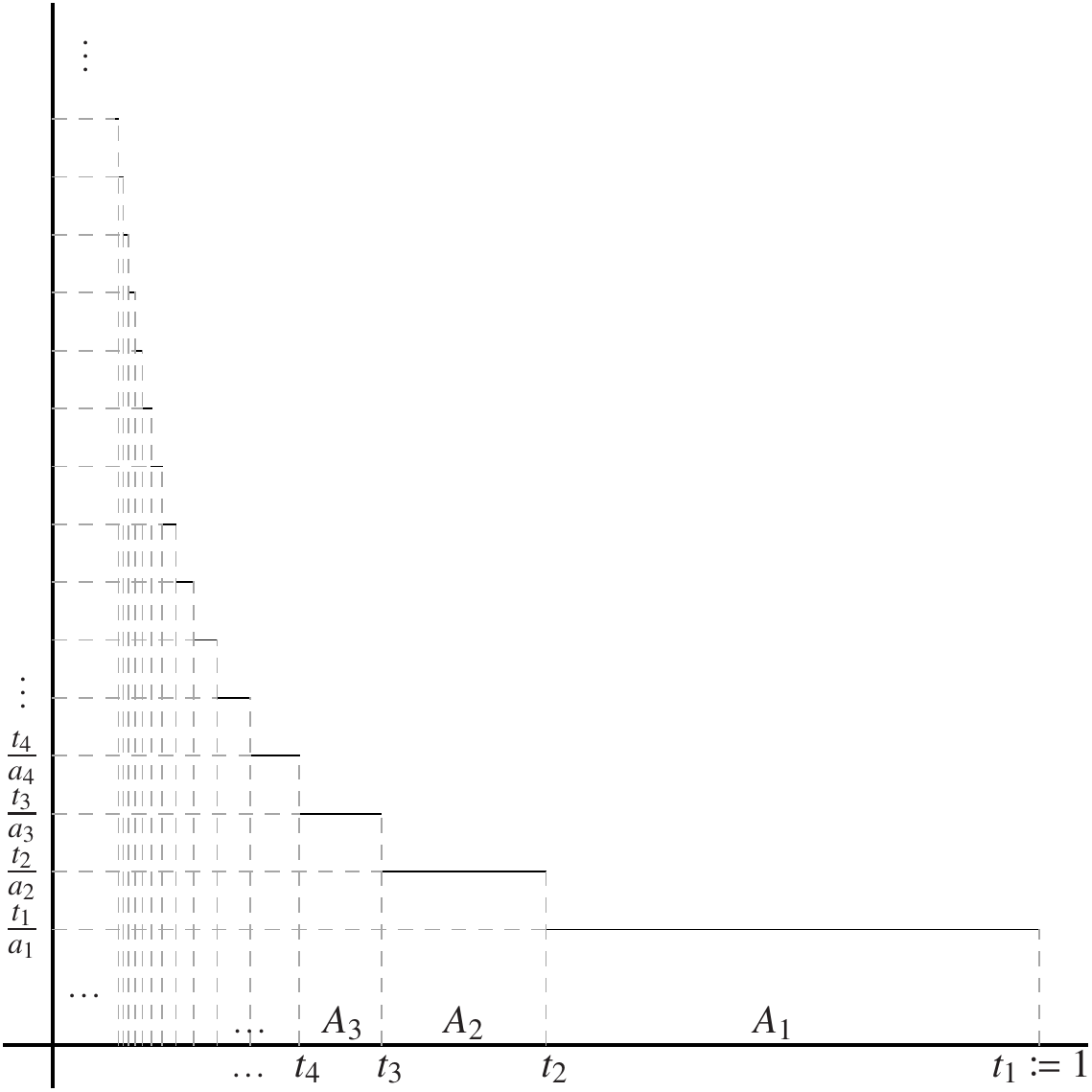}
 }}
 \end{center}
\caption{Plot of the density function $h_{\alpha}$ for the $\alpha$-Farey map, where $t_{n} = n^{-\delta}$ for all $n \in \mathbb{N}$.}
 \label{Fig4}
 \end{figure}

The \textit{$\alpha$-Farey transfer operator} $\widehat{F}_{\alpha} \colon \mathcal{L}^{1}_{\mu_{\alpha}}([0, 1]) \to \mathcal{L}^{1}_{\mu_{\alpha}}([0,1])$ is the positive linear operator given by
\begin{equation}\label{eq:defofThat}
\widehat{F}_{\alpha}(v) \coloneqq \sum_{n \in \mathbb{N}} \left( \frac{t_{n+1}}{t_{n}} v \circ F_{\alpha, 0} + \left( 1 - \frac{t_{n+1}}{t_{n}}\right) v \circ F_{\alpha, 1} \right) \cdot \mathds{1}_{A_{n}},
\end{equation}
where $F_{\alpha, 0} \coloneqq (F_{\alpha}\lvert_{[0,t_{2}]})^{-1}$ and $F_{\alpha, 1} \coloneqq (F_{\alpha}\lvert_{[t_{2}, 1]})^{-1}$ refer to the inverse branches of $F_{\alpha}$.  In particular, for all $v \in \mathcal{L}^{1}_{\mu_{\alpha}}([0,1])$ and all measurable functions $w$ with $\lVert w \rVert_{\infty} < \infty$,
\begin{equation}\label{eq:transfer_cheat}
\int \widehat{F}_{\alpha}(v) \cdot w \,\mathrm{d}\mu_{\alpha} = \int v \cdot w \circ F_{\alpha} \,\mathrm{d}\mu_{\alpha}.
\end{equation}
(The above equality is a direct consequence of \cite[Lemma 2.5]{KMS:2012}.)  Note that the equality given in \eqref{eq:transfer_cheat} is the usual defining relation for the \textit{transfer operator} of $F_{\alpha}$.  However, the relation in \eqref{eq:transfer_cheat} only determines values of the transfer operator of $F_{\alpha}$ applied to an observable $\mu_{\alpha}$-almost everywhere.  Thus the $\alpha$-Farey transfer operator is a version of the transfer operator of $F_{\alpha}$.

In order to state our main theorems, we will also require the following function spaces.  We let $\phi \colon \overline{A}_{1} \to \mathbb{N} \cup \{ +\infty \}$ denote the \textit{first return time}, given by $\phi(y) \coloneqq \inf \{ n \in \mathbb{N} : F_{\alpha}^{n}(y) \in \overline{A}_{1} \}$, and we write $\{\phi = n \} \coloneqq \{ y \in A_{1} : \phi(y) = n\}$.  Let $\beta_{\alpha}$ denote the countable-infinite partition $\{ \{ \phi = n \} : n \in \mathbb{N} \}$ of $A_{1}$ and let $\mathcal{B}_{\alpha}$ denote the set of functions with domain $[0, 1]$ that are supported on a subset of $\overline{A}_{1}$ and which have finite $\lVert \cdot \rVert_{\mathcal{B}_{\alpha}}$-norm, where $\lVert \cdot \rVert_{\mathcal{B}_{\alpha}} \coloneqq \lVert \cdot \rVert_{\infty} + D_{\alpha}(\cdot)$ and where
\[
D_{\alpha}(f) \coloneqq \sup_{a \in \beta_{\alpha}}  \sup_{x \neq y \in a} \frac{\lvert f(x) - f(y) \rvert}{\lvert x - y \rvert}.
\]
In particular, if $f \in \mathcal{B}_{\alpha}$, then $f$ is Lipschitz continuous on each atom of $\beta_{\alpha}$, zero outside of $\overline{A}_{1}$ and bounded (everywhere).  We then define
\[
\mathcal{A}_{\alpha} \coloneqq \left\{ v \in \mathcal{L}_{\mu_{\alpha}}^{1}([0, 1]) : \lVert v \rVert_{\infty} < \infty\; \text{and} \; \widehat{F}^{n-1}_{\alpha}(v \cdot \mathds{1}_{A_{n}}) \in \mathcal{B}_{\alpha} \; \text{for all} \; n \in \mathbb{N}   \right\}.
\]
For examples of observables belonging to $\mathcal{A}_{\alpha}$, we refer the reader to Example~\ref{ex:ex1} and the discussion succeeding our main results, Theorems~\ref{thm:main2} and \ref{thm:main1}.  Let us also recall from \cite{KMS:2012} that the \textit{wandering rate} of $F_{\alpha}$ is given by
\[
w_{n} = w_{n}(F_{\alpha}) \coloneqq \mu_{\alpha} \left( \bigcup_{k=0}^{n-1} F_{\alpha}^{-k}(A_{1})\right) = \mu_{\alpha} \left( \bigcup_{k=1}^{n} A_{k} \right) = \sum_{k = 1}^{n} t_{k}.
\]
Further, as we will see in \eqref{eq:Karamata}, if $\delta \in (0, 1)$ and if the given $\alpha$-Farey system is $\delta$-expansive, then the wandering rate is regularly varying of order $1 - \delta$.  Also, in the case that $\delta = 1$, if
\[
\left( w_{n} / w_{\lceil n \cdot {w_{n}}^{-2} \rceil }\right)_{n\in\mathbb{N}}
\]
is a bounded sequence, then we say that the wandering rate $w_{n}$ is \textit{moderately increasing}.  Here and in the sequel for $r \in \mathbb{R}$ we let $\lceil r \rceil$ denote the smallest integer greater than or equal to $r$.

With the above preparations, we are now in a position to state the main results, Theorems~\ref{thm:main2} and \ref{thm:main1}. Theorem~\ref{thm:main2} provides mild conditions under which the asymptotic behavior of the iterates of an $\alpha$-Farey transfer operator \textit{restricted to} $A_{1}$ can be extended to all of $(0, 1]$ and is used in our proof of Theorem~\ref{thm:main1}.  (Note that, by \eqref{eq:Karamata}, any $\delta$-expansive $\alpha$-Farey system satisfies the requirements of Theorem~\ref{thm:main2}.)  One of the facets of Theorem~\ref{thm:main1} is that it gives sufficient conditions on observables which guarantee that iterates of an $\alpha$-Farey transfer operator applied to such an observable is asymptotic to a constant times the wandering rate.  These results complement \cite[Theorem 10.5]{MT:2011} and show that additional assumptions are required in \cite[Theorem 10.4]{MT:2011}.  Namely, in the case that $\delta = 1$, we show that the statement of \cite[Theorem 10.4]{MT:2011} holds true, with the additional assumption that the wandering rate is moderately increasing (Theorem~\ref{thm:main1}\ref{i}); for $\delta \in (1/2, 1)$, we provide an example which demonstrates that additional requirements are necessary for the expected convergence (Theorem~\ref{thm:main1}\ref{iii}) and provide sufficient conditions (Theorem~\ref{thm:main1}\ref{ii}).

\begin{theorem}
\label{thm:main2}
For an  $\alpha$-Farey system $([0, 1], \mathscr{B}, \mu_{\alpha}, F_{\alpha})$ for which the wandering rate satisfies the condition $\lim_{n \to \infty} w_{n} /w_{n+1} = 1$, we have that, if $v \in \mathcal{L}_{\mu_{\alpha}}^{1}([0, 1])$ satisfies
\[
\lim_{n \to +\infty} w_{n} \widehat{F}_{\alpha}^{n} (v) = \Gamma_{\delta} \int v \, \mathrm{d}\mu_{\alpha}
\]
uniformly on $\overline{A}_{1}$,  then the same holds on any compact subset of $(0, 1]$. The same statement holds when replacing uniform convergence by almost everywhere uniform convergence.
\end{theorem}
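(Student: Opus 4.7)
The plan is to induct on the atom index. As the atoms $A_{n}$ of $\alpha$ accumulate only at $0$, every compact $K \subset (0, 1]$ is contained in $\bigcup_{n=1}^{N} A_{n} \cup \{1\}$ for some $N \in \mathbb{N}$, so the theorem reduces to proving that $w_{k} \widehat{F}_{\alpha}^{k}(v) \to C$ uniformly on each $A_{n}$ with $n \leq N$, where $C \coloneqq \Gamma_{\delta} \int v \, \mathrm{d}\mu_{\alpha}$. The base case $n = 1$ is precisely the hypothesis, so it suffices to establish the inductive step: if the convergence is uniform on $A_{1}$ and on $A_{n}$, then it is uniform on $A_{n+1}$.

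For the inductive step, set $u^{(k)} \coloneqq \widehat{F}_{\alpha}^{k}(v)$ and evaluate the defining formula \eqref{eq:defofThat} at an arbitrary $x \in A_{n}$:
\[
u^{(k+1)}(x) = \frac{t_{n+1}}{t_{n}} u^{(k)}(F_{\alpha, 0}(x)) + \left(1 - \frac{t_{n+1}}{t_{n}}\right) u^{(k)}(F_{\alpha, 1}(x)).
\]
Since $F_{\alpha, 0}$ maps $A_{n}$ bijectively onto $A_{n+1}$, every $y \in A_{n+1}$ has the form $y = F_{\alpha, 0}(x)$ for a unique $x \in A_{n}$, and so the identity above can be solved for $u^{(k)}(y)$. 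Multiplying through by $w_{k}$, rewriting the coefficient of $u^{(k+1)}(x)$ as $(w_{k}/w_{k+1}) \cdot w_{k+1}$, and invoking (i) the hypothesis $w_{k}/w_{k+1} \to 1$, (ii) the inductive hypothesis applied with index $k+1$ on $A_{n}$, and (iii) the base case applied at $F_{\alpha, 1}(x) \in A_{1}$, one arrives at the uniform limit
\[
\lim_{k \to \infty} w_{k} u^{(k)}(y) = \frac{t_{n}}{t_{n+1}} C - \frac{t_{n} - t_{n+1}}{t_{n+1}} C = C
\]
on $A_{n+1}$, completing the induction.

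For the $\mu_{\alpha}$-almost everywhere uniform version, the same algebraic manipulations apply: since $F_{\alpha, 0}$ and $F_{\alpha, 1}$ are piecewise linear, hence absolutely continuous, they push $\mu_{\alpha}$-null sets to $\mu_{\alpha}$-null sets, and the exceptional set on $A_{1}$ transports cleanly through the induction. The main conceptual point, and essentially the only subtlety, is the realisation that \eqref{eq:defofThat} can be rearranged to express $u^{(k)}$ on $A_{n+1}$ in terms of $u^{(k+1)}$ on $A_{n}$ together with $u^{(k)}$ on $A_{1}$; the assumption $w_{k}/w_{k+1} \to 1$ then enters precisely to absorb the resulting one-step shift in the iteration index.
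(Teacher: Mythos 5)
Your proposal is correct and is essentially the paper's own argument: both proceed by induction over the atoms, invert the one-step relation to express the $k$-th iterate on $A_{n+1}$ in terms of the $(k+1)$-th iterate on $A_{n}$ and the $k$-th iterate on $\overline{A}_{1}$, and use $w_{k}/w_{k+1} \to 1$ to absorb the index shift. The only (cosmetic) difference is that the paper phrases the recursion through the Perron--Frobenius operator $\mathcal{P}_{\alpha}$ and identifies the limiting constant via the eigenequation $\mathcal{P}_{\alpha}h_{\alpha}=h_{\alpha}$, whereas you work directly with \eqref{eq:defofThat} and the constant drops out of the $t_{n}/t_{n+1}$ arithmetic.
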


\begin{remark}
For $\delta \in (0, 1)$, a $\delta$-expansive $\alpha$-Farey system has wandering rate satisfying $\lim_{n \to \infty} w_{n} /w_{n+1} = 1$.
\end{remark}

\begin{theorem}\label{thm:main1}
Let $([0, 1], \mathscr{B}, \mu_{\alpha}, F_{\alpha})$ be a $\delta$-expansive $\alpha$-Farey system.
\begin{enumerate}[label={(\roman*)},leftmargin=*]
\item\label{i} Let $\delta\!=\!1$ and assume that the wandering rate is moderately increasing.\,If\,$v\!\in\!\mathcal{A}_{\alpha}$\,and\,if
\begin{equation}\label{eq:main1}
\sum_{k = 1}^{\infty} \left\lVert \widehat{F}_{\alpha}^{k-1}(v \cdot \mathds{1}_{A_{k}} ) \right\rVert_{\infty} < + \infty,
\end{equation}
then uniformly on compact subsets of $(0, 1]$,
\begin{equation}\label{lim:parti}
\lim_{n \to \infty} w_{n} \widehat{F}_{\alpha}^{n}(v) = \int v \, \mathrm{d}\mu_{\alpha}.
\end{equation}
\item\label{ii} For $\delta \in (1/2,1]$, if $v \in \mathcal{L}_{\lambda}^{1}([0,1])$ with $\lvert v(1) \rvert$ bounded and if 
\begin{enumerate}[leftmargin=*]
\item the sequence $( D_{\alpha}(\mathds{1}_{A_{1}} \cdot \widehat{F}_{\alpha}^{n-1} (v \cdot \mathds{1}_{A_{n}})) )_{n \in \mathbb{N}}$ is bounded and
\item there exist constants $c > \lvert v(1) \rvert$ and $\eta \in (0, \delta)$ with $\lVert v \cdot \mathds{1}_{A_{n}} \rVert_{\infty} \leq c n^{\eta}$, for all $n \in \mathbb{N}$.
\end{enumerate}
then uniformly on compact subsets of $(0, 1]$,
\begin{equation}\label{lim:partii}
\lim_{n \to \infty} w_{n}  \widehat{F}_{\alpha}^{n}(v/h_{\alpha})  = \Gamma_{\delta} \int v/h_{\alpha} \, \mathrm{d}\mu_{\alpha}.
\end{equation}
Here, $\Gamma_{\delta} \coloneqq (\Gamma(1 + \delta) \Gamma(2-\delta))^{-1}$ and $\Gamma$ denotes the Gamma function.
\item\label{iii} For $\delta \in (1/2, 1)$, there exists a positive, locally constant, Riemann integrable function $v \in \mathcal{A}_{\alpha}$ of bounded variation satisfying the inequality in \eqref{eq:main1}, such that, for all $x \in \overline{A}_{1}$, 
\begin{equation}\label{eq:counterexample}
\liminf_{n \to \infty} w_{n}  \widehat{F}_{\alpha}^{n}(v)(x) =  \Gamma_{\delta} \int v \, \mathrm{d} \mu_{\alpha}
\quad \text{and} \quad
\limsup_{n \to \infty} w_{n}  \widehat{F}_{\alpha}^{n}(v)(x) = +\infty.
\end{equation}
\end{enumerate}
\end{theorem}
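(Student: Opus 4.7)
The plan is to treat the convergence statements \ref{i} and \ref{ii} by reducing to uniform convergence on $\overline{A}_{1}$ via Theorem~\ref{thm:main2} and invoking the induced-transfer-operator asymptotics of Melbourne--Terhesiu on $\mathcal{B}_{\alpha}$, and then to construct the counterexample \ref{iii} by hand. The common reduction is the decomposition
\[
\widehat{F}_{\alpha}^{n}(v) = \sum_{k=1}^{n+1} \widehat{F}_{\alpha}^{n-k+1}(f_{k}), \qquad f_{k} := \widehat{F}_{\alpha}^{k-1}(v \cdot \mathds{1}_{A_{k}}),
\]
where, by the very definition of $\mathcal{A}_{\alpha}$, each $f_{k}$ lies in $\mathcal{B}_{\alpha}$, and is thus supported on $\overline{A}_{1}$ and Lipschitz on each atom of $\beta_{\alpha}$. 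For such functions the induced map on $A_{1}$ is Gibbs--Markov, and the Melbourne--Terhesiu results combined with \eqref{eq:transfer_cheat} and the $F_{\alpha}$-invariance of $\mu_{\alpha}$ yield
\[
w_{m} \widehat{F}_{\alpha}^{m}(f_{k}) \to \Gamma_{\delta} \int f_{k} \, \mathrm{d}\mu_{\alpha} = \Gamma_{\delta} \int v \cdot \mathds{1}_{A_{k}} \, \mathrm{d}\mu_{\alpha}
\]
uniformly on $\overline{A}_{1}$. Granting this, Theorem~\ref{thm:main2} propagates the convergence to any compact subset of $(0,1]$, so for \ref{i} and \ref{ii} the only remaining issue is to justify the exchange of limit with summation over $k$.

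For \ref{i}, where $\delta = 1$ and $\Gamma_{\delta} = 1$, the hypothesis \eqref{eq:main1} provides a majorant: since $\widehat{F}_{\alpha}$ is a positive $L^{\infty}$-contraction, one has $\lVert \widehat{F}_{\alpha}^{n-k+1}(f_{k}) \rVert_{\infty} \le \lVert f_{k} \rVert_{\infty}$, and the summability of the latter controls contributions of $k$ in a sub-polynomial range. The moderately increasing property of $w_{n}$ is used precisely to replace $w_{n}$ by $w_{n-k+1}$ uniformly for $k \lesssim n/w_{n}^{2}$, after which dominated convergence delivers \eqref{lim:parti}. This two-scale summability argument, pairing \eqref{eq:main1} with moderate increase, is the main technical step of part \ref{i}.

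For \ref{ii}, rewrite $v/h_{\alpha} = \sum_{n} (a_{n}/t_{n}) v \cdot \mathds{1}_{A_{n}}$, so that the effective observable on $A_{n}$ has sup-norm at most $(a_{n}/t_{n}) \lVert v \cdot \mathds{1}_{A_{n}} \rVert_{\infty}$, which, by the growth hypothesis $\lVert v \cdot \mathds{1}_{A_{n}} \rVert_{\infty} \le c n^{\eta}$ and the $\delta$-expansiveness $a_{n}/t_{n} \sim \delta/n$, decays like $n^{\eta - 1}$. The boundedness of $D_{\alpha}(\mathds{1}_{A_{1}} \cdot \widehat{F}_{\alpha}^{n-1}(v \cdot \mathds{1}_{A_{n}}))$ ensures that the $f_{k}$ lie uniformly in $\mathcal{B}_{\alpha}$, so an analogue of \eqref{eq:main1} for $v/h_{\alpha}$ holds and the exchange-of-limit argument runs as before, producing \eqref{lim:partii} with the factor $\Gamma_{\delta}$ inherited from the renewal-theoretic core of the Melbourne--Terhesiu asymptotic. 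No extra monotonicity of $w_{n}$ is needed here because, for $\delta < 1$, $w_{n}/w_{n+1} \to 1$ holds automatically.

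For \ref{iii}, the plan is to take $v$ of the form $v = \sum_{n} c_{n} \mathds{1}_{A_{n}}$, with a positive sequence $(c_{n})$ that is small on the bulk and allowed to be large along a sparse subsequence $(n_{j})_{j}$. The smallness of the bulk guarantees that $v$ is positive, locally constant, of bounded variation, Riemann integrable, lies in $\mathcal{A}_{\alpha}$, and satisfies \eqref{eq:main1}; along $(n_{j})$ one chooses $c_{n_{j}}$ so that, at a carefully synchronised time $n \approx n_{j}$, the single-atom contribution $c_{n_{j}} \widehat{F}_{\alpha}^{n-n_{j}+1}(\widehat{F}_{\alpha}^{n_{j}-1}(\mathds{1}_{A_{n_{j}}}))$ drives $w_{n} \widehat{F}_{\alpha}^{n}(v)(x)$ to infinity at every $x \in \overline{A}_{1}$, while at all other times the sparse terms are negligible and the bulk contribution behaves as in \ref{i}--\ref{ii}, giving $\liminf = \Gamma_{\delta} \int v \, \mathrm{d}\mu_{\alpha}$. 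The main obstacle, and the genuinely delicate part of the whole theorem, is the simultaneous calibration of the sparsity of $(n_{j})$ and the magnitudes $c_{n_{j}}$: sparse enough to preserve Riemann integrability and \eqref{eq:main1}, yet large enough to force the blow-up uniformly on $\overline{A}_{1}$. The condition $\delta > 1/2$ enters precisely through the sharp quantitative asymptotics for $\widehat{F}_{\alpha}^{m}(\mathds{1}_{A_{n_{j}}})$ on $\overline{A}_{1}$ that make this calibration feasible.
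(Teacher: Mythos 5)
Your treatment of parts \ref{i} and \ref{ii} follows essentially the paper's route: decompose $\widehat{F}_{\alpha}^{n}(v)$ over the atoms $A_{k}$, apply the Melbourne--Terhesiu asymptotic on $(\mathcal{B}_{\alpha},\lVert\cdot\rVert_{\mathcal{B}_{\alpha}})$ to each piece, use \eqref{eq:main1} (respectively the growth condition (b)) to interchange limit and sum, and finish with Theorem~\ref{thm:main2}. The one step you should not dismiss as ``dominated convergence'' is the control of $w_{n}/w_{n-k}$ for $k$ close to $n$, where this ratio tends to infinity: in (i) this is exactly where the moderately increasing hypothesis enters (the paper isolates it as Lemma~\ref{lem:jsigman}, via a three-range splitting of the sum), and in (ii) it is the regular variation of $w_{n}$ with index $1-\delta$ combined with $\eta<\delta$ that rescues the terms with $n-k=O(1)$ --- the statement $w_{n}/w_{n+1}\to 1$ alone is far too weak for this.

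The genuine gap is in part \ref{iii}: a single-atom sparse construction cannot work. By Lemma~\ref{lem:partIIclaim3}, $\widehat{F}_{\alpha}^{n_{j}-1}(\mathds{1}_{A_{n_{j}}})=t_{n_{j}}\mathds{1}_{A_{1}}$, and $\sup_{m}\lVert w_{m}\mathds{1}_{\overline{A}_{1}}\widehat{F}_{\alpha}^{m}(\mathds{1}_{A_{1}})\rVert_{\infty}<\infty$ by Theorem~\ref{thm:MT2011:THM2.1}; hence the largest contribution the lone atom $A_{n_{j}}$ can ever make to $w_{n}\widehat{F}_{\alpha}^{n}(v)$ on $\overline{A}_{1}$ is of order $\sup_{n\geq n_{j}}(w_{n}/w_{n-n_{j}+1})\,c_{n_{j}}t_{n_{j}}\asymp w_{n_{j}}c_{n_{j}}t_{n_{j}}$. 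Forcing this to diverge requires $c_{n_{j}}\gtrsim (t_{n_{j}}w_{n_{j}})^{-1}\asymp n_{j}^{2\delta-1}/l(n_{j})^{2}\to\infty$ (here $\delta>1/2$ works \emph{against} you), so $v$ is unbounded and therefore neither Riemann integrable, nor of bounded variation, nor in $\mathcal{A}_{\alpha}$, which demands $\lVert v\rVert_{\infty}<\infty$. Well-separated sparse atoms also cannot conspire: at any time $n$ their joint contribution is $\approx\Gamma_{\delta}\sum_{j}c_{n_{j}}t_{n_{j}}$, which \eqref{eq:main1} forces to be finite. This is precisely why the paper uses \emph{blocks} with \emph{decaying} heights, $v=\sum_{k}s_{k}\sum_{j=N_{k}-n_{k}}^{N_{k}}\mathds{1}_{A_{j}}$ with $s_{k}=2^{-g_{3}k}$ and width $n_{k}\approx N_{k}^{g_{2}/g_{1}}$, $g_{2}/g_{1}$ just below $\delta$: the blow-up at time $N_{k}$ is the coherent superposition $s_{k}\sum_{j}(w_{N_{k}}/w_{N_{k}-j})t_{j}\asymp s_{k}N_{k}^{1-2\delta}n_{k}^{\delta}$ of many small contributions, and condition (C3) makes this diverge while $v$ stays bounded. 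Finally, your liminf claim cannot be obtained by saying ``the bulk behaves as in (i)--(ii)'', since the hypotheses of those parts fail for this $v$ by design; the paper proves $\liminf\geq\Gamma_{\delta}\int v\,\mathrm{d}\mu_{\alpha}$ by positivity and finite truncation, and excludes a strictly larger liminf via Karamata's Tauberian theorem and pointwise dual ergodicity of the Darling--Kac set $\overline{A}_{1}$ (\cite[Proposition 3.7.5]{JA:1997}); some argument of this kind is indispensable.
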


\begin{remark}
It is immediate that if $a_{n} = n^{-1}(n+1)^{-1}$, then $t_{n} = n^{-1}$ and $w_{n} \sim \ln(n)$, and that these parameters give rise to an example of an $\alpha$-Farey system which satisfies the conditions of Theorem~\ref{thm:main1}\ref{i}.  Indeed there exist many examples of $\alpha$-Farey systems for which the conditions of Theorem~\ref{thm:main1}\ref{i} are satisfied, but where the wandering rate behaves very differently to the function $n \mapsto \ln(n)$.   Letting $\delta = 1$, as we will see in Lemma~\ref{lem:powerslowly}\ref{SV(vi)}, the sequence $(w_{n})_{n \in \mathbb{N}}$ is slowly varying and $\lim_{n \to \infty} n t_{n} /w_{n} = 0$.  We also have that
\[
w_{n} = \sum_{j = 1}^{n} t_{j} = n \sum_{j = n + 1}^{\infty} a_{j} + \sum_{j = 1}^{n} j a_{j} = n t_{n + 1} + \sum_{j = 1}^{n} j a_{j}.
\]
Using this we deduce the following.
\begin{enumerate}
\item If $\displaystyle{a_{n} = n^{-2} (\ln(n))^{-1/2}} \e^{(\ln(n))^{1/2}}$, then $t_{n} \sim n^{-1} (\ln(n))^{-1/2} \e^{(\ln(n))^{1/2}}$ and $\displaystyle{w_{n} \sim \e^{(\ln(n))^{1/2}}}$.\\
\item If $\displaystyle{a_{n - 16} = n^{-2} \kappa(n) \e^{\ln(n)/\ln(\ln(n))}}$, then $\displaystyle{t_{n} \sim n^{-1} \kappa(n) \e^{\ln(n)/\ln(\ln(n))}}$ and $\displaystyle{w_{n} \sim \e^{\ln(n)/\ln(\ln(n))}}$,\\
where $\kappa(n) = (\ln(\ln(n)) - 1) (\ln(\ln(n)))^{-2}$.
\end{enumerate}
Indeed the above two sets of parameters give rise to examples of $1$-expansive $\alpha$-Farey systems whose wandering rate is moderately increasing.  Moreover,
\[
\lim_{n \to \infty} \frac{\ln(n)}{\e^{(\ln(n))^{1/2}}} = 0,
\quad
\lim_{n \to \infty} \frac{\ln(n)}{\e^{\ln(n)/\ln(\ln(n))}} = 0
\quad \text{and} \quad
\lim_{n \to \infty} \frac{\e^{(\ln(n))^{1/2}}}{\e^{\ln(n)/\ln(\ln(n))}} = 0,
\]
demonstrating that two moderately increasing wandering rates, although they are all slowly varying, do not have to be asymptotic to each other nor to the function $n \mapsto \ln(n)$.
\end{remark}

\begin{remark}
In the case that $F_{\alpha}$ is a $1$-expansive $\alpha$-Farey map, we have that the wandering rate $w_{n}$ is a slowly varying function.  We remark here that it is not the case that every slowly varying function is moderately increasing, namely, it is not the case that if $l \colon [0, \infty] \to \mathbb{R}$ is a slowly varying function, then the sequence
\begin{align}\label{eq:mod_varying_exam}
\left( l(n) / l(\lceil n \cdot {l(n)}^{-2} \rceil ) \right)_{n\in\mathbb{N}}
\end{align}
is bounded.  For instance consider the following.  Let $( c_{k} )_{k \in \mathbb{N}}$ be a decreasing sequence of positive real numbers which converge to zero and, for $k \in \mathbb{N}$, set
\[
x_{k + 1} = \frac{k}{2 {c_{k}}^{2}} \quad \text{and} \quad b_{k+1} = (c_{k} - c_{k+1}) x_{k+1} + b_{k},
\]
where $x_{1} = b_{1} = 0$.  We define $m \colon [0, \infty) \to \mathbb{R}$ by
\[
m(x) \coloneqq c_{k} x + b_{k},
\]
for $x \in [x_{k}, x_{k+1}]$.  The function $l \colon [1, \infty] \to \mathbb{R}$ defined by $l(x) \coloneqq \e^{m(\ln(x))}$ is, by construction, slowly varying.  However, the sequence given in \eqref{eq:mod_varying_exam} is unbounded.  (We are grateful to Fredrik Ekstr\"om for providing this example).
\end{remark}

\begin{remark}
If in the definition of the norm $\lVert \cdot \rVert_{\mathcal{B}_{\alpha}}$, one replaces the norm $\lVert \cdot \rVert_{\infty}$ by the \textit{essential supremum norm} $\lVert \cdot \rVert_{\textup{ess sup}}$, then by appropriately adapting the proofs given in the sequel, one can obtain a proof of Theorem~\ref{thm:main1} where the uniform convergence on compact subsets of $(0, 1]$ is replaced by uniform convergence almost everywhere on compact subsets of $(0, 1)$.
\end{remark}

\begin{remark}
The first part of the proof of Theorem~\ref{thm:main1}~\ref{i} and \ref{ii} are inspired by the first paragraph in the proof of \cite[Theorem 10.4]{MT:2011}.  
\end{remark}

The structure of this paper is as follows.  In Section~\ref{sec:pre} we collect basic properties of $\alpha$-Farey maps and their corresponding transfer operators.  In Section~\ref{J_Diploma_Thesis} we provide a proof of Theorem~\ref{thm:main2}.  This proof is inspired by arguments originally presented in \cite{JK:2011}.  Then in Section~\ref{sec:main1} we present the proof of Theorem~\ref{thm:main1}, breaking the proof into three constituent parts.  In Section~\ref{section:Counterexamples2.2} we obtain part \ref{i} and give explicit examples of observables satisfying the given properties.  In Section~\ref{MRE_alpha_delta_in_051} we prove part \ref{ii}, for explicit examples of observables which satisfy the pre-requests of Theorem~\ref{thm:main1}~\ref{ii} we refer the reader to Remark~\ref{rmk:rmk2}.  Finally we conclude with Section~\ref{section:Counterexamples} where part \ref{iii} is proven using a constructive argument.

Before we conclude this section with a series of remarks, Remarks~\ref{rmk:rmk1} to \ref{rmk:rmk3}, in which we comment on how Theorem~\ref{thm:main1}, and hence Theorem \ref{thm:main2}, complement the results obtained in \cite{KS:2008,T:2000}, we introduce the \textit{Perron-Frobenius operator} $\mathcal{P}_{\alpha} \colon \mathcal{L}_{\lambda}^{1}([0, 1]) \to \mathcal{L}_{\lambda}^{1}([0, 1])$ which is defined by 
\[
\mathcal{P}_{\alpha} (f) (x) \coloneqq \sum_{y\in F_{\alpha}^{-1}(x)} \lvert F_{\alpha}'(y) \rvert^{-1} f(y),
\]
where $F_{\alpha}'$ denotes the right derivative of $F_{\alpha}$ and where $F_{\alpha}'(1) \coloneqq -{a_{1}}^{-1}$.  (Note, by construction, if $F_{\alpha}$ is $\delta$-expansive, then the right derivative of $F_{\alpha}$ at zero is equal to one.)  A useful relation between the operators $\mathcal{P}_{\alpha}$ and $\widehat{F}_{\alpha}$ is that
\begin{equation}\label{eq:PF-T}
\widehat{F}_{\alpha}(f) = \mathcal{P}_{\alpha}(h_{\alpha} \cdot f) / h_{\alpha}.
\end{equation}
We refer the reader to \cite[p. 1001]{KMS:2012} for a proof of the equality in \eqref{eq:PF-T}.

\begin{remark}\label{rmk:rmk1}
For certain interval maps $T \colon [0, 1] \to [0,1]$ with two monotonically increasing, differentiable branches whose invariant measure has infinite mass and whose tail probabilities are regularly varying with exponent $-\delta \in [-1,0)$, Thaler \cite{T:2000} discerned the precise asymptotic behaviour of iterates of the associated Perron-Frobenius operator $\mathcal{P}$, namely, that for all Riemann integrable functions $u$ with domain $[0, 1]$, one has that
\begin{equation}\label{eq:Thaler:KS2008}
\lim_{n \to +\infty} w_{n}(T) \mathcal{P}^{n}(u) = \Gamma_{\delta} \left( \int u \, \mathrm{d}\lambda \right)  h
\end{equation}
uniformly almost everywhere on compact subsets of $(0,1]$.  Here, $h$ denotes the associated invariant density and $w_{n}(T)$ denotes the wandering rate of $T$.  However, $\alpha$-Farey maps do not fall into this class of interval maps.  Using the relationship between the transfer and the Perron-Frobenius operator, Theorem~\ref{thm:main1}~\ref{ii} together with the assumption that the Banach space of functions of bounded variation\label{Page:BV-Banach} with the norm $\lVert \cdot \rVert_{\textup{ess sup}} + \mathrm{Var}(\cdot)$ satisfies certain functional analytic conditions (namely, conditions (H1) and (H2) given in Section~\ref{sec:pre}), show that Thaler's result can be extended to $\delta$-expansive $\alpha$-Farey maps.  Results of this form have also been obtained in \cite{ZT:2006} for AFN maps.  (Note, an $\alpha$-Farey map is also not an AFN map.)
\end{remark}

\begin{remark}\label{rmk:rmk2}
{Kesseb\"ohmer} and Slassi \cite{KS:2008} showed that for the classical Farey map the convergence given in \eqref{eq:Thaler:KS2008} holds uniformly almost everywhere on $[1/2, 1]$ for convex \mbox{$C^{2}$-observables}.  Likewise, for a $\delta$-expansive $\alpha$-Farey map, Theorems~\ref{thm:main1}~\ref{ii} implies that if $u$ is a convex {$C^{2}$-observable}, then the convergence in \eqref{eq:Thaler:KS2008} holds uniformly on compact subsets of $(0,1]$.  To see that a convex $C^{2}$-observable satisfies the requirements of Theorem~\ref{thm:main1}~\ref{ii}, one employs arguments similar to those used in Example~\ref{ex:ex1} together with \eqref{eq:RND} and \eqref{eq:Thatonpetals1}.
 \end{remark}
 
\begin{remark}\label{rmk:rmk3}
The consequences of Theorem~\ref{thm:main1} go even further, in that for a \mbox{$\delta$-expansive} \mbox{$\alpha$-Farey} map, we are able to obtain that the convergence given in \eqref{eq:Thaler:KS2008} holds uniformly on compact subsets of $(0, 1]$, for certain non-Riemann integrable observables which are not necessarily bounded.  For instance, if $v$ is an observable such that $v \cdot \mathds{1}_{A_{n}} = n^{\eta} \mathds{1}_{A_{n}}$, $v(0) = 0$ and $v(1) = 1$, for some $\eta \in (0, \delta)$, then, as we will see in Lemma~\ref{lem:partIIclaim3}, since $0 \leq F^{n-1}(\mathds{1}_{A_{n}}) \leq t_{n}\mathds{1}_{\overline{A}_{1}}$, this observable fulfils the conditions of Theorem~\ref{thm:main1}~\ref{ii} and it is neither Riemann integrable nor is it bounded.
\end{remark}

\begin{notation}
We use the symbol $\sim$ between the elements of two sequences of real numbers $(b_{n})_{n \in \mathbb{N}}$ and $(c_{n})_{n \in \mathbb{N}}$ to mean that the sequences are asymptotically equivalent, namely that $\lim_{n \to +\infty} b_{n}/c_{n} = 1$.  We use the Landau notation $b_{n} = \mathfrak{o}(c_{n})$, if $\lim_{n \to +\infty} b_{n}/c_{n} = 0$.  The same notation is used between two real-valued function $f$ and $g$, defined on the set of real numbers $\mathbb{R}$, positive real numbers $\mathbb{R}^{+}$, natural numbers $\mathbb{N}$ or non-negative integers $\mathbb{N}_{0}$.  Specifically, if $\lim_{x \to +\infty} f(x)/g(x) = 1$, then we will write $f \sim g$, and if $\lim_{x \to +\infty} f(x)/g(x) = 0$, then we will write $f \in \mathfrak{o}(g)$.
\end{notation}

\section{The $\alpha$-Farey system}\label{sec:pre}

The map $G_{\alpha} \colon \overline{A}_{1} \to \overline{A}_{1}$ defined by
\[
G_{\alpha}(x) \coloneqq \begin{cases}
F_{\alpha}^{\phi(x)}(x) & \text{if} \; x \in A_{1},\\
t_{2} & \text{if} \; x = 1,
\end{cases}
\]
is called the \textit{first return map} and it is well known that $G_{\alpha}$ is conservative, ergodic and measure preserving on $(\overline{A}_{1}, \mathscr{B}\lvert_{\overline{A}_{1}}, \mu_{\alpha}\lvert_{\overline{A}_{1}})$, see for instance \cite[Propositions 1.4.8 and 1.5.3]{JA:1997}.  From this point on, we write $\mu_{\alpha}$ for both $\mu_{\alpha}$ and $\mu_{\alpha}\lvert_{\overline{A}_{1}}$ and $\mathscr{B}$ for both $\mathscr{B}$ and $\mathscr{B}\lvert_{\overline{A}_{1}}$.  Also, throughout, unless otherwise stated, we assume that $F_{\alpha}$ is $\delta$-expansive.

 \begin{figure}[htbp]
 \begin{center}
 \subfloat[$\delta = 65/128$.]{
 \scalebox{0.5}{
 \includegraphics{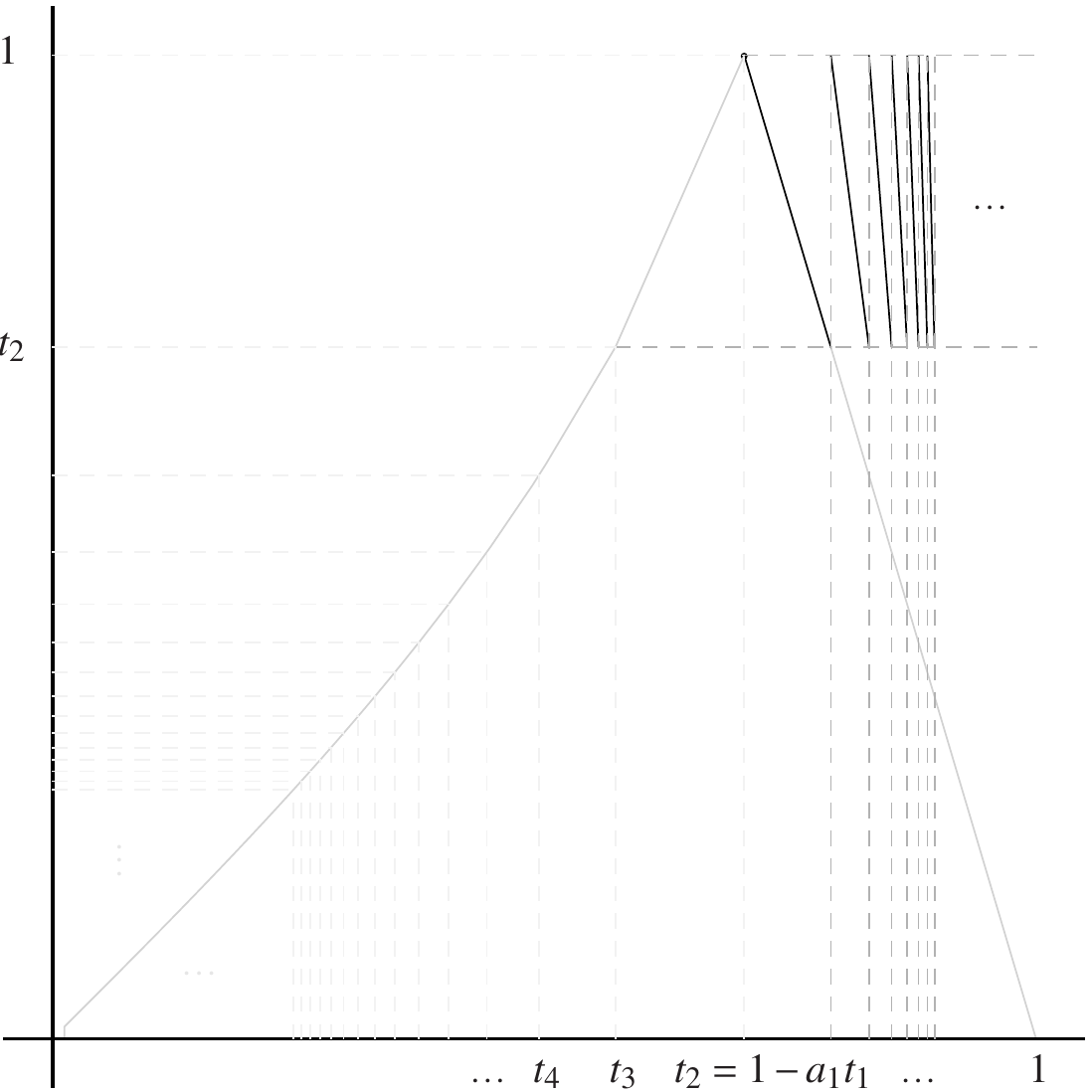}
 }}
 \hspace{0.5em}
 \subfloat[$\delta = 1$.]{
 \scalebox{0.5}{
  \includegraphics{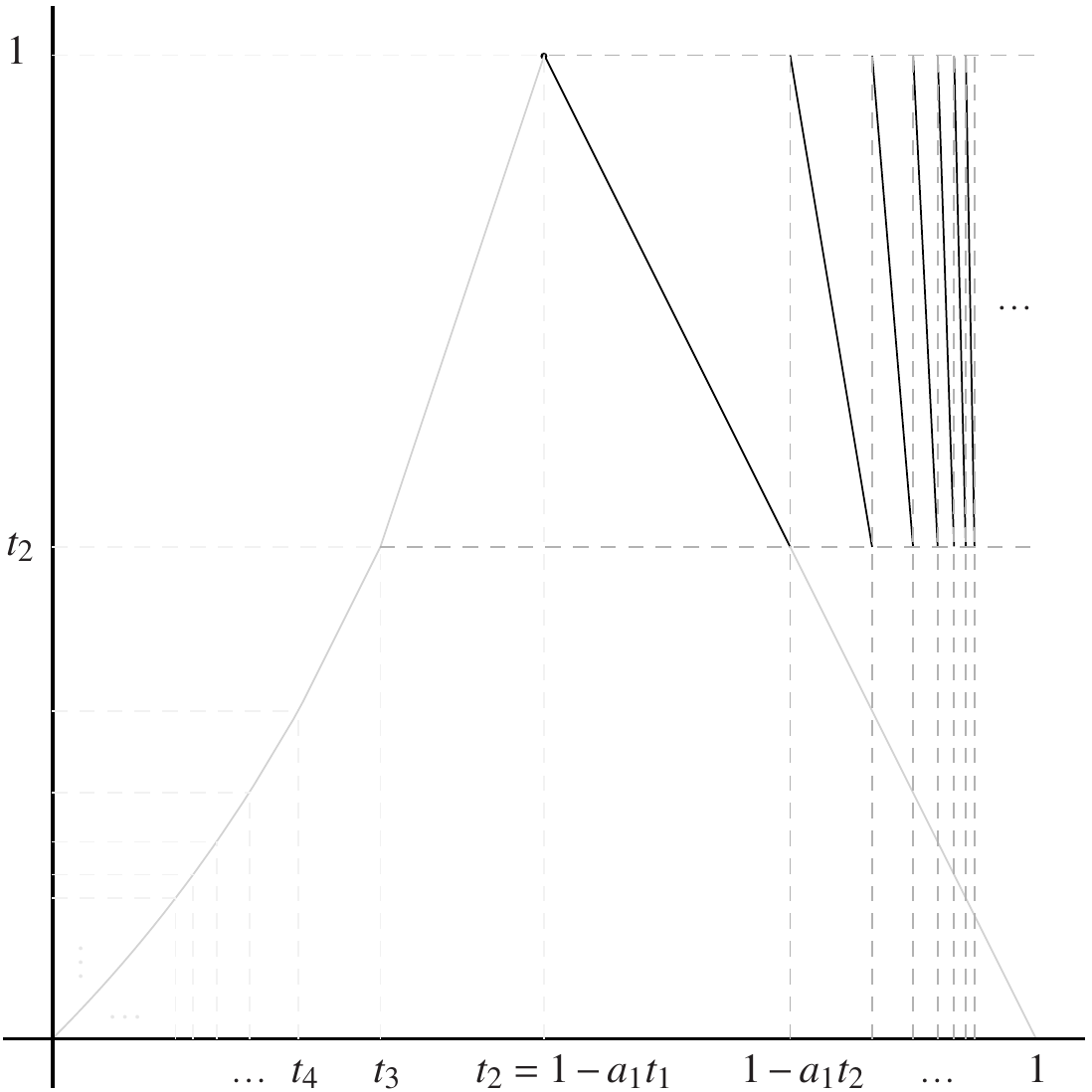}
 }}
 \end{center}
 \caption{Plot of $G_{\alpha}$, where $t_{n} = n^{-\delta}$ for all $n \in \mathbb{N}$.}
 \label{Fig2}
 \end{figure}

We denote the open unit disk in $\mathbb{C}$ by $\mathbb{D} \coloneqq \{ z \in \mathbb{C} : \lvert z \rvert < 1 \}$, its closure by $\overline{\mathbb{D}}$ and its boundary by $\mathbb{S}$.  Given $z \in \overline{\mathbb{D}}$, define $R_{n}, R(z) \colon \mathcal{L}_{\mu_{\alpha}}^{1}([0,1]) \to \mathcal{L}_{\mu_{\alpha}}^{1}([0,1])$ by 
\[
R_{n}(v) \coloneqq \widehat{F}_{\alpha}(v \cdot \mathds{1}_{\{\phi = n \}} ) = \mathds{1}_{\overline{A}_{1}} \cdot \widehat{F}_{\alpha}^{n}(v \cdot \mathds{1}_{\{\phi = n \}})
\quad \text{and} \quad
R(z) \coloneqq \sum_{n \in \mathbb{N}} z^{n} R_{n}.
\]
It is an easy exercise to show that $R(1)$ is a version of the transfer operator of the map $G_{\alpha}$. Namely, for all $v \in \mathcal{L}_{\mu_{\alpha}}^{1}([0,1])$ and all measurable functions $w$ with $\lVert w \rVert_{\infty}$ finite, we have that
\begin{equation}\label{eq:TransferOfInduced}
\int R(1)(v)\cdot w \, \mathrm{d}\mu_{\alpha}=\int v \cdot w\circ G_{\alpha} \cdot \mathds{1}_{\overline{A}_{1}} \mathrm{d}\mu_{\alpha}.
\end{equation}
We will see in Proposition~\ref{prop:conditions_H1_H2} that $(\mathcal{B}_{\alpha}, \lVert \cdot \rVert_{\mathcal{B}_{\alpha}})$ is a Banach space, that the operators $R_{n}$ and $R(1)$ map $\mathcal{B}_{\alpha}$ into itself and that the following properties are fulfilled.
\begin{description}[leftmargin=*]
\item[(H1)] There exists a constant $c > 0$ such that the operator $R_{n} \colon \mathcal{B}_{\alpha} \to \mathcal{B}_{\alpha}$ is a bounded linear operator with $\lVert R_{n} \rVert_{\mathrm{op}} \leq c \mu( \{\phi = n \} )$, for all $n \in \mathbb{N}$.  (Here, the operator norm $\lVert \cdot \rVert_{\mathrm{op}}$ is taken with respect to the Banach space $(\mathcal{B}_{\alpha}, \lVert \cdot \rVert_{\mathcal{B}_{\alpha}})$.)
\item[(H2)]
\hspace{-1.05em}
\parbox[t]{0.9\textwidth}{
\vspace{-0.9em}
\begin{enumerate}[label={(\roman*)},leftmargin=*]
\item The operator $R(1)$ restricted to $\mathcal{B}_{\alpha}$ has a simple isolated eigenvalue at $1$.
\item For each $z \in \overline{\mathbb{D}} \setminus \{ 1 \}$, the value $1$ is not in the spectrum of $R(z)\vert_{\mathcal{B}_{\alpha}}$.
\end{enumerate}
}
\end{description}
A result that will be crucial in the proof of Theorem~\ref{thm:main1} is \cite[Theorem 2.1]{MT:2011}. In order to see how this result reads in our situation, note, for a $\delta$-expansive $\alpha$-Farey map, that $\mu( \{ y \in A_{1} : \phi(y) > n \} ) = t_{n+1} \sim  l(n) n^{-\delta}$, which is essential in the proof of \cite[Theorem 2.1]{MT:2011} given in \cite{MT:2011}. Further, since $t_{n} \sim l(n) n^{-\delta}$ and $t_{n+1} < t_{n}$, for all $n \in \mathbb{N}$, Karamata's Tauberian Theorem for power series \cite[Corollary 1.7.3]{BGT:1987} implies that, for $\delta \in (0, 1)$,
\begin{equation}\label{eq:Karamata}
w_{n} \sim \overline{\Gamma}_{\delta} n^{1 - \delta} l(n).
\end{equation}
Here, $\overline{\Gamma}_{\delta} \coloneqq \Gamma(1 - \delta)/\Gamma(2-\delta)$.

\begin{theorem}[{\cite[Theorem 2.1]{MT:2011}}]\label{thm:MT2011:THM2.1}
Assuming the above setting, in particular that conditions (H1) and (H2) are satisfied, we have that
\[
\lim_{n \to +\infty} \sup_{v \in \mathcal{B}_{\alpha}: \lVert v \rVert_{\mathcal{B}_{\alpha}} = 1} \left\lVert w_{n} \mathds{1}_{\overline{A}_{1}} \cdot \widehat{F}_{\alpha}^{n}(v) -  \Gamma_{\delta} \int v \, \mathrm{d}\mu_{\alpha} \right\rVert_{\mathcal{B}_{\alpha}} = 0.
\]
\end{theorem}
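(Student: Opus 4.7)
The plan is to apply operator renewal theory in the spirit of Sarig and Gou\"ezel. Let $T_{n} \colon \mathcal{B}_{\alpha} \to \mathcal{B}_{\alpha}$ be defined by $T_{n}(v) \coloneqq \mathds{1}_{\overline{A}_{1}} \cdot \widehat{F}_{\alpha}^{n}(v)$, and decompose this according to the first return of an orbit to $\overline{A}_{1}$. This yields the operator renewal equation $T_{n} = \sum_{k=1}^{n} R_{k} T_{n-k}$ with $T_{0} = \mathrm{Id}$, equivalently, at the level of generating functions on $\mathbb{D}$,
\[
T(z)(I - R(z)) = I, \qquad T(z) \coloneqq \sum_{n \geq 0} z^{n} T_{n}.
\]
The target asymptotic $w_{n} T_{n} \to \Gamma_{\delta} P$ in operator norm, where $P$ is the rank-one projection $v \mapsto \bigl(\int v \, \mathrm{d}\mu_{\alpha}\bigr) \cdot \mathds{1}_{\overline{A}_{1}}$, will follow from the behaviour of $(I - R(z))^{-1}$ as $z \to 1$ within $\mathbb{D}$.

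Next I would extract the leading singularity of $R(z)$ at $z = 1$. Condition (H1) together with $\sum_{n} \mu_{\alpha}(\{\phi = n\}) = \mu_{\alpha}(\overline{A}_{1}) < \infty$ shows that $R(z)$ extends continuously to $\overline{\mathbb{D}}$, while (H2) gives invertibility of $I - R(z)$ on $\overline{\mathbb{D}} \setminus \{1\}$ and a rank-one spectral projection $P$ for $R(1)$ at the eigenvalue $1$; ergodicity of $G_{\alpha}$ on $(\overline{A}_{1}, \mu_{\alpha})$ together with \eqref{eq:TransferOfInduced} identify $P$ with the functional displayed above. Writing $R(1) - R(z) = \sum_{n \geq 1}(1 - z^{n}) R_{n}$, Abel summation and the identity $\mu_{\alpha}(\{\phi > n\}) = t_{n+1}$ show that on $\operatorname{range}(P)$ the contribution of $I - R(z)$ is of the scalar form $(1 - z) \sum_{n \geq 0} z^{n} t_{n+1}$. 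Since $t_{n} \sim l(n) n^{-\delta}$, Karamata's Abelian theorem converts this into a regularly varying singularity of index $\delta$ at $z = 1$.

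The third step is the Tauberian inversion. On $\ker(P)$, condition (H2)(ii) yields that $(I - R(z))^{-1}(I - P)$ admits a continuous boundary extension to $\overline{\mathbb{D}}$, and a Wiener--L\'evy type smoothing argument of Gou\"ezel then gives that its Fourier coefficients decay in operator norm at rate $\mathfrak{o}(w_{n}^{-1})$. On $\operatorname{range}(P)$, the scalar Karamata Tauberian theorem for power series \cite[Corollary 1.7.3]{BGT:1987} inverts the regularly varying singularity; combining this with the asymptotic $w_{n} \sim \overline{\Gamma}_{\delta} n^{1 - \delta} l(n)$ from \eqref{eq:Karamata} and the reflection identity $\Gamma(\delta)\Gamma(1-\delta) = \pi/\sin(\pi\delta)$ yields the constant $\Gamma_{\delta} = (\Gamma(1+\delta)\Gamma(2-\delta))^{-1}$. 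Adding the two contributions gives $w_{n} T_{n} = \Gamma_{\delta} P + \mathfrak{o}(1)$ in the operator norm on $\mathcal{B}_{\alpha}$, which is the asserted uniform convergence on the unit ball.

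The main technical obstacle is the Tauberian inversion in the operator-valued setting: scalar Karamata theorems do not transfer directly, so one has to first algebraically peel off the scalar singularity on $\operatorname{range}(P)$ from the operator-valued regular part on $\ker(P)$, and then apply a scalar Tauberian theorem only to the scalar piece while controlling the remainder uniformly in the Banach space norm. Additional care is needed in the boundary case $\delta = 1$, where the regular variation degenerates and one must work with slowly varying $w_{n}$ directly; this is precisely the place where the later hypothesis that $w_{n}$ be moderately increasing enters in applications such as Theorem~\ref{thm:main1}\ref{i}.
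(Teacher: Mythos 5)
First, be aware that the paper does not prove this statement: it is quoted directly from \cite[Theorem 2.1]{MT:2011}, and the paper's only mathematical input at this point is Proposition~\ref{prop:conditions_H1_H2}, which verifies the hypotheses (H1) and (H2) for $(\mathcal{B}_{\alpha}, \lVert \cdot \rVert_{\mathcal{B}_{\alpha}})$. Your outline is therefore a reconstruction of Melbourne--Terhesiu's argument, and its skeleton --- the renewal equation $T(z)(I-R(z))=I$, the splitting along the rank-one spectral projection $P$, and the identification of the scalar singularity $(1-z)\sum_{n\geq 0}z^{n}t_{n+1}$ on $\operatorname{range}(P)$ --- is indeed the correct one.

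The gap is in your third step, and it is not a technicality: it is the entire content of \cite[Theorem 2.1]{MT:2011}. Karamata's Tauberian theorem for power series inverts an Abelian asymptotic at $z=1$ into an asymptotic for the \emph{partial sums} $\sum_{k=0}^{n}T_{k}$; this yields precisely the Darling--Kac / pointwise dual ergodic statement (which the present paper invokes separately at the end of the proof of Proposition~\ref{prop:liminfandlimsup}), not the term-by-term asymptotic $w_{n}T_{n}\to\Gamma_{\delta}P$. Differentiating a Ces\`{a}ro asymptotic requires either monotonicity (unavailable for operator-valued coefficients) or the strong renewal theorem machinery of Garsia--Lamperti, Erickson and Doney, which is exactly what Melbourne and Terhesiu adapt and which is the reason the result is restricted to $\delta>1/2$: for $\delta\leq 1/2$ the first-order asymptotic can genuinely fail. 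Similarly, continuity of $(I-R(z))^{-1}(I-P)$ on $\overline{\mathbb{D}}$ gives only $\mathfrak{o}(1)$ decay of its Fourier coefficients by Riemann--Lebesgue; upgrading this to $\mathfrak{o}(w_{n}^{-1})=\mathfrak{o}(n^{\delta-1}/l(n))$ requires quantitative control of the modulus of continuity near $z=1$, not a soft Wiener--L\'{e}vy argument. Finally, the moderate-increase hypothesis on $w_{n}$ plays no role in this theorem; in the paper it enters only through Lemma~\ref{lem:jsigman}, when the asymptotics on $\overline{A}_{1}$ are propagated to general $v\in\mathcal{A}_{\alpha}$ in Theorem~\ref{thm:main1}\ref{i}.
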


In the sequel, we will also use of the following auxiliary results, where we set $\Sigma \coloneqq \{ 0, 1\}$ and for each $n \in \mathbb{N}$ and for each word $\omega \coloneqq (\omega_{1}, \omega_{2}, \dots, \omega_{n}) \in \Sigma^{n}$ we let $F_{\alpha, \omega} \colon [0, 1] \to [0, 1]$ denote the function $F_{\alpha, \omega_{1}} \circ F_{\alpha, \omega_{2}} \circ \dots \circ F_{\alpha, \omega_{n}}$.  If $\omega$ is equal to the empty word, then we set $F_{\alpha, \omega}$ to be equal to the identity map.

\begin{lemma}\label{lamma:PartIIClaim1}
Let $F_{\alpha} \colon [0, 1] \to [0, 1]$ denote an arbitrary $\alpha$-Farey map.  For each $k \in \mathbb{N}$, we have that
\[
\widehat{F}_{\alpha}^{k}(u) = \sum_{n \in \mathbb{N}} \sum_{\omega \in \Sigma^{k}} c_{n, \omega} u \circ F_{\alpha, \omega} \cdot \mathds{1}_{A_{n}},
\]
where the constants $c_{n, \omega}$ are given recursively by
\begin{equation}\label{eq:constants}
\begin{aligned}
c_{n, (0)} &\coloneqq t_{n+1}/t_{n}, 
& c_{n, (\omega_{1}, \dots, \omega_{k},0)} &\coloneqq c_{n, (0)} c_{n+1,\omega},\\
c_{n, (1)} &\coloneqq 1 - t_{n+1}/t_{n}, 
& c_{n, (\omega_{1}, \dots, \omega_{k},1)} &\coloneqq c_{n+1, (1)} c_{1,\omega}.
\end{aligned}
\end{equation}
In particular, letting $\displaystyle{0_{k} \coloneqq (\underbrace{0,0, \dots, 0}_{\text{$k$-times}})}$, we have that $c_{1, 0_{k}} = t_{k+1}$, for each $k \in \mathbb{N}$.
\end{lemma}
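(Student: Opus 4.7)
The plan is a straightforward induction on $k \in \mathbb{N}$. The base case $k=1$ is immediate from the definition \eqref{eq:defofThat} of $\widehat{F}_{\alpha}$, because $\Sigma^{1} = \{(0),(1)\}$ and the weights $c_{n,(0)} = t_{n+1}/t_{n}$ and $c_{n,(1)} = 1 - t_{n+1}/t_{n}$ are precisely those appearing there.

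For the inductive step I would write $\widehat{F}_{\alpha}^{k+1}(u) = \widehat{F}_{\alpha}(\widehat{F}_{\alpha}^{k}(u))$, insert the inductive hypothesis, and use the linearity of $\widehat{F}_{\alpha}$ to reduce the computation to
\[
\widehat{F}_{\alpha}^{k+1}(u) = \sum_{n \in \mathbb{N}} \sum_{\omega \in \Sigma^{k}} c_{n,\omega}\, \widehat{F}_{\alpha}\bigl(u \circ F_{\alpha,\omega} \cdot \mathds{1}_{A_{n}}\bigr).
\]
The crucial geometric input is that the two inverse branches satisfy $F_{\alpha,0}(A_{m}) \subseteq A_{m+1}$ and $F_{\alpha,1}([0,1]) \subseteq A_{1}$. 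Consequently, when \eqref{eq:defofThat} is applied to $u \circ F_{\alpha,\omega} \cdot \mathds{1}_{A_n}$ on $A_{m}$, the contribution from the $F_{\alpha,0}$ branch is nonzero only when $n = m+1$, while that from the $F_{\alpha,1}$ branch is nonzero only when $n = 1$. Using the identity $F_{\alpha,\omega} \circ F_{\alpha,j} = F_{\alpha,(\omega_{1},\dots,\omega_{k},j)}$ for $j \in \Sigma$ to rewrite the compositions and re-indexing the remaining sum produces coefficients of exactly the form prescribed by the recursion \eqref{eq:constants}.

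The ``in particular'' statement follows from a secondary induction establishing the stronger identity $c_{n,0_{k}} = t_{n+k}/t_{n}$ for every $n \in \mathbb{N}$: the base case is the definition $c_{n,(0)} = t_{n+1}/t_{n}$, and the step uses only the first part of \eqref{eq:constants} to give $c_{n,0_{k+1}} = c_{n,(0)} c_{n+1,0_{k}} = (t_{n+1}/t_{n})(t_{n+1+k}/t_{n+1}) = t_{n+k+1}/t_{n}$. Specializing at $n=1$ and using $t_{1} = \sum_{j \in \mathbb{N}} a_{j} = 1$ yields $c_{1,0_{k}} = t_{k+1}$, as required. The only step requiring genuine care is the bookkeeping of index shifts in the inductive step; once these are tracked correctly, the verification is mechanical.
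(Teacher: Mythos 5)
Your proposal is correct and follows essentially the same route as the paper: induction on $k$, inserting the inductive hypothesis into $\widehat{F}_{\alpha}(\widehat{F}_{\alpha}^{k}(u))$, and using that $F_{\alpha,0}$ maps $A_{m}$ into $A_{m+1}$ while $F_{\alpha,1}$ maps into $\overline{A}_{1}$ to identify the surviving terms and read off the recursion. Your explicit secondary induction giving $c_{n,0_{k}} = t_{n+k}/t_{n}$ is a welcome elaboration of the ``straightforward inductive argument'' the paper leaves to the reader.
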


\begin{proof}
We proceed by induction on $k$.  The start of the induction is an immediate consequence of \eqref{eq:defofThat}.  Suppose that the statement is true for some $k \in \mathbb{N}$.  We then have that
\[
\begin{aligned}
&\widehat{F}_{\alpha}^{k+1}(u)
=\widehat{F}_{\alpha} \left(\widehat{F}_{\alpha}^{k}(u)\right)
=\widehat{F}_{\alpha} \left( \sum_{n \in \mathbb{N}} \sum_{\omega \in \Sigma^{k}} c_{n, \omega} u \circ F_{\alpha, \omega} \cdot \mathds{1}_{A_{n}} \right)\\
&=\sum_{m = 1}^{\infty} \left( \sum_{n \in \mathbb{N}} \sum_{\omega \in \Sigma^{k}} \frac{t_{m+1}}{t_{m}} c_{n, \omega} u \circ  F_{\alpha, \omega} \circ  F_{\alpha, 0} \cdot  \mathds{1}_{A_{n}} \circ  F_{\alpha, 0}\right.\\
&\left.\hspace{1cm} + \left(1 - \frac{t_{m+1}}{t_{m}}\right) c_{n, \omega} u \circ  F_{\alpha, \omega} \circ  F_{\alpha, 1} \hspace{-0.1em}\cdot \hspace{-0.1em} \mathds{1}_{A_{n}} \circ  F_{\alpha, 1} \right) \cdot  \mathds{1}_{A_{m}}\\
&=\sum_{m = 1}^{\infty} \left(\sum_{\omega \in \Sigma^{k}} \frac{t_{m+1}}{t_{m}} c_{m+1, \omega} u \circ F_{\alpha, \omega}\circ F_{\alpha, 0} + \left(1 - \frac{t_{m+1}}{t_{m}}\right) c_{1, \omega} u \circ F_{\alpha, \omega} \circ F_{\alpha, 1} \right) \cdot \mathds{1}_{A_{m}}.
\end{aligned}
\]
This completes the proof of \eqref{eq:constants}.  The remaining assertion is proven by a straight forward inductive argument, using the defining relations given in \eqref{eq:constants}.
\end{proof}

\begin{lemma}\label{lem:partIIclaim3}
For each $n \in \mathbb{N}$, we have that
\begin{equation}\label{eq:Thatonpetals1}
\widehat{F}_{\alpha}^{n-1}(\mathds{1}_{A_{n}}) = t_{n} \mathds{1}_{A_{1}}
\end{equation}
and hence, by the definition of the norm, $\lVert \widehat{F}_{\alpha}^{n-1}(\mathds{1}_{A_{n}}) \rVert_{\mathcal{B}_{\alpha}} = \lVert \widehat{F}_{\alpha}^{n-1}(\mathds{1}_{A_{n}}) \rVert_{\infty} = t_{n}$.
\end{lemma}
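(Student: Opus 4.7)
My plan is to proceed by induction on $n$. The base case $n=1$ is immediate, since $\widehat{F}_{\alpha}^{0}(\mathds{1}_{A_{1}}) = \mathds{1}_{A_{1}}$ and $t_{1} = \sum_{k \geq 1} a_{k} = 1$. For the inductive step I would first compute $\widehat{F}_{\alpha}(\mathds{1}_{A_{n+1}})$ in closed form by applying the definition \eqref{eq:defofThat} directly, and then iterate using the inductive hypothesis.

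The decisive observation is the action of the two inverse branches on the partition $\alpha$. Reading off the piecewise linear formula for $F_{\alpha}$, one has that $F_{\alpha}$ maps $A_{m}$ bijectively onto $A_{m-1}$ for every $m \geq 2$, while $F_{\alpha}|_{\overline{A}_{1}}$ is affine onto $[0,1]$. Consequently, $F_{\alpha,0}$ sends each $A_{m}$ bijectively onto $A_{m+1}$, and the image of $F_{\alpha,1}$ is contained in $\overline{A}_{1}$. For $n \geq 1$, this yields the pointwise identities $\mathds{1}_{A_{n+1}} \circ F_{\alpha,0} = \mathds{1}_{A_{n}}$ and $\mathds{1}_{A_{n+1}} \circ F_{\alpha,1} = 0$, the latter because $A_{n+1}$ is disjoint from $\overline{A}_{1}$. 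Substituting into \eqref{eq:defofThat} collapses the double sum to a single surviving term, giving
\[
\widehat{F}_{\alpha}(\mathds{1}_{A_{n+1}}) = \sum_{m \in \mathbb{N}} \frac{t_{m+1}}{t_{m}}\, \mathds{1}_{A_{n}} \cdot \mathds{1}_{A_{m}} = \frac{t_{n+1}}{t_{n}}\, \mathds{1}_{A_{n}}.
\]
Applying $\widehat{F}_{\alpha}^{n-1}$ and invoking the induction hypothesis then produces $\widehat{F}_{\alpha}^{n}(\mathds{1}_{A_{n+1}}) = (t_{n+1}/t_{n}) \cdot t_{n}\, \mathds{1}_{A_{1}} = t_{n+1}\, \mathds{1}_{A_{1}}$, closing the induction. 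The norm statement is then immediate, since $t_{n} \mathds{1}_{A_{1}}$ is constant on $\overline{A}_{1}$, so the Lipschitz seminorm $D_{\alpha}$ of $\lVert \cdot \rVert_{\mathcal{B}_{\alpha}}$ vanishes and $\lVert \cdot \rVert_{\infty} = t_{n}$.

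No step here looks like a serious obstacle; the only real content is the bookkeeping about the inverse branches on the atoms of $\alpha$. As a shorter alternative, one could bypass the induction entirely by invoking Lemma~\ref{lamma:PartIIClaim1}: in the expansion of $\widehat{F}_{\alpha}^{n-1}(\mathds{1}_{A_{n}})$, the constraint $F_{\alpha,\omega}(A_{m}) \subset A_{n}$ with $\omega \in \Sigma^{n-1}$ forces $\omega = 0_{n-1}$ and $m = 1$ (because any occurrence of $1$ in $\omega$ resets the index into $A_{1}$, which can then only be driven as far as $A_{n-1}$ by the remaining zeros), and the explicit constant $c_{1, 0_{n-1}} = t_{n}$ recorded at the end of that lemma instantly yields $t_{n}\,\mathds{1}_{A_{1}}$.
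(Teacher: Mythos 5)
Your argument is correct, and your ``shorter alternative'' at the end is in fact exactly the paper's proof: the authors apply Lemma~\ref{lamma:PartIIClaim1}, observe that only the word $0_{n-1}$ and the atom $A_{1}$ survive, and read off $c_{1,0_{n-1}}=t_{n}$, supplemented by a separate evaluation at the point $x=1$. Your primary route is a self-contained induction on $n$ that bypasses Lemma~\ref{lamma:PartIIClaim1} entirely: the one-step identity $\widehat{F}_{\alpha}(\mathds{1}_{A_{n+1}})=(t_{n+1}/t_{n})\,\mathds{1}_{A_{n}}$, obtained from \eqref{eq:defofThat} together with $\mathds{1}_{A_{n+1}}\circ F_{\alpha,0}=\mathds{1}_{A_{n}}$ and $\mathds{1}_{A_{n+1}}\circ F_{\alpha,1}=0$, does all the work, and it holds pointwise everywhere (both sides vanish at $x=1$), so no boundary case needs separate treatment. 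The underlying computation is the same in both versions -- tracking how the inverse branches move the atoms of $\alpha$ -- but your induction is arguably cleaner for this particular statement, at the cost of not producing the general word expansion that the paper reuses elsewhere. Two tiny points of precision: the base case should cite $t_{1}=\sum_{k\geq 1}a_{k}=\lambda((0,1))=1$ as you do, and in the norm computation the function $t_{n}\mathds{1}_{A_{1}}$ is not literally constant on $\overline{A}_{1}$ (it vanishes at $x=1$), but $D_{\alpha}$ only tests the atoms of $\beta_{\alpha}$, which are subsets of $A_{1}$ on which the function is constant, so the seminorm indeed vanishes.
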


\begin{proof}
For $n = 1$ the result is immediate.  For $n \neq 1$, we have, by Lemma~\ref{lamma:PartIIClaim1}, that, on $[0, 1)$,
\[
\widehat{F}_{\alpha}^{n-1}(\mathds{1}_{A_{n}})
= \sum_{k = 1}^{\infty} c_{k, 0_{n-1}} \mathds{1}_{A_{n}} \circ F_{\alpha,0_{n-1}} \cdot \mathds{1}_{A_{k}}
= \sum_{k = 1}^{\infty} c_{k, 0_{n-1}} \mathds{1}_{A_{1}} \cdot \mathds{1}_{A_{k}}
= t_{n} \mathds{1}_{A_{1}}.
\]
To complete the proof, we need to evaluate the function $\widehat{F}_{\alpha}^{n-1}(\mathds{1}_{A_{n}})$ at the point $1$ for $n \geq 2$.  By Lemma~\ref{lamma:PartIIClaim1}, we have that
\[
\widehat{F}_{\alpha}^{n-1}(\mathds{1}_{A_{n}})(1)
= \sum_{n \in \mathbb{N}} \sum_{\omega \in \Sigma} c_{n, \omega} \mathds{1}_{A_{n}} \circ F_{\alpha, \omega}(1) \cdot \mathds{1}_{A_{n}}(1)
= \sum_{\omega \in \Sigma} c_{1, \omega} \mathds{1}_{A_{n}} \circ F_{\alpha, \omega}(1)
= 0.
\]
This completes the proof.
\end{proof}

We will now show that conditions (H1) and (H2) are satisfied for every $\delta$-expansive $\alpha$-Farey system and for the Banach space $(\mathcal{B}_{\alpha}, \lVert \cdot \rVert_{\mathcal{B}_{\alpha}})$.

\begin{proposition}\label{prop:conditions_H1_H2}
The pair $(\mathcal{B}_{\alpha}, \lVert \cdot \rVert_{\mathcal{B}_{\alpha}})$ forms a Banach space and for a $\delta$-expansive $\alpha$-Farey system, the operators $R_{n}$ and $R(1)$ map $\mathcal{B}_{\alpha}$ into itself.  Moreover, (H1) and (H2) are satisfied.
\end{proposition}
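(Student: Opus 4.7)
The plan is to establish the four constituents in sequence: completeness of the normed space, inclusion of $R_{n}$ and $R(1)$ into the bounded operators on $\mathcal{B}_{\alpha}$ with the bound required by (H1), simplicity and isolation of the eigenvalue $1$ of $R(1)|_{\mathcal{B}_{\alpha}}$, and absence of $1$ from the spectrum of $R(z)|_{\mathcal{B}_{\alpha}}$ for $z \in \overline{\mathbb{D}} \setminus \{1\}$. For completeness, given a Cauchy sequence $(f_{k})$ in $\mathcal{B}_{\alpha}$, the bound $\lVert \cdot \rVert_{\infty} \leq \lVert \cdot \rVert_{\mathcal{B}_{\alpha}}$ forces uniform convergence to some $f$ supported on $\overline{A}_{1}$; for any atom $a \in \beta_{\alpha}$ and distinct $x, y \in a$, the elementary inequality
\[
\frac{\lvert f(x)-f(y)\rvert}{\lvert x - y\rvert} = \lim_{k \to \infty}\frac{\lvert f_{k}(x)-f_{k}(y)\rvert}{\lvert x - y\rvert} \leq \liminf_{k \to \infty} D_{\alpha}(f_{k}) \leq \sup_{k} \lVert f_{k}\rVert_{\mathcal{B}_{\alpha}}
\]
shows $D_{\alpha}(f) < \infty$, so $f \in \mathcal{B}_{\alpha}$; applying the same lower-semicontinuity argument to $f - f_{k}$ in place of $f$ gives $\lVert f - f_{k} \rVert_{\mathcal{B}_{\alpha}} \to 0$.

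Next I would identify $R_{n}$ explicitly using Lemma~\ref{lamma:PartIIClaim1}: tracing the itinerary shows that only the word $\omega = (1,0,\ldots,0) \in \Sigma^{n}$ makes $F_{\alpha, \omega}$ land in $\{\phi = n\}$ when $x \in \overline{A}_{1}$, and the recursion \eqref{eq:constants} yields $c_{1, (1, 0, \ldots, 0)} = a_{n}$. Since $G_{\alpha}|_{\{\phi = n\}}$ is an affine bijection onto $\overline{A}_{1}$ with slope $1/a_{n}$, its inverse $\psi_{n} = F_{\alpha, (1,0,\ldots,0)}$ has slope $a_{n}$, and the formula
\[
R_{n}(v) = a_{n}\, (v \circ \psi_{n}) \cdot \mathds{1}_{\overline{A}_{1}}
\]
follows for every $v \in \mathcal{B}_{\alpha}$. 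Because $\psi_{n}(\overline{A}_{1}) \subset \{\phi = n\}$ is contained in a single atom of $\beta_{\alpha}$, the composition $v \circ \psi_{n}$ is Lipschitz on all of $\overline{A}_{1}$ with constant at most $a_{n} D_{\alpha}(v)$, yielding
\[
\lVert R_{n}(v)\rVert_{\mathcal{B}_{\alpha}} \leq a_{n}\lVert v\rVert_{\infty} + a_{n}^{2} D_{\alpha}(v) \leq a_{n}\lVert v\rVert_{\mathcal{B}_{\alpha}} = \mu_{\alpha}(\{\phi = n\})\lVert v\rVert_{\mathcal{B}_{\alpha}}.
\]
This delivers (H1) with constant $c=1$ and, by summation, shows $R(1) = \sum_{n} R_{n}$ is bounded on $\mathcal{B}_{\alpha}$ with $\lVert R(1)\rVert_{\mathrm{op}} \leq \sum_{n} a_{n} = 1$.

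For (H2)(i), the identity $R(1)\mathds{1}_{\overline{A}_{1}} = \mathds{1}_{\overline{A}_{1}}$ (reflecting the $G_{\alpha}$-invariance of $\mu_{\alpha}|_{\overline{A}_{1}}$) exhibits $1$ as an eigenvalue. Summing the per-branch estimates gives
\[
D_{\alpha}(R(1)(v)) \leq \left(\sum_{n \in \mathbb{N}} a_{n}^{2}\right) D_{\alpha}(v) \leq a_{1} D_{\alpha}(v),
\]
and the same estimate in fact holds for arbitrary $x, y \in \overline{A}_{1}$ (not merely within one atom of $\beta_{\alpha}$), so $R(1)v$ is globally Lipschitz on $\overline{A}_{1}$ with a uniformly controlled constant. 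Thus $R(1)\mathcal{B}_{\alpha}$ sits in a bounded subset of the space of Lipschitz functions on $\overline{A}_{1}$; combining this with Arzela-Ascoli compactness of such balls in $C^{0}(\overline{A}_{1})$ and Hennion's quasi-compactness criterion yields that $R(1)$ is quasi-compact on $\mathcal{B}_{\alpha}$. Ergodicity of $G_{\alpha}$ with respect to $\mu_{\alpha}$, established in \cite{JA:1997}, then forces $1$ to be simple, and the geometric contraction of $D_{\alpha}$ by the factor $a_{1}^{k}$ isolates it from the remainder of the spectrum.

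For (H2)(ii), the case $\lvert z\rvert < 1$ is immediate since $\lVert R(z)\rVert_{\mathrm{op}} \leq \sum_{n} \lvert z\rvert^{n} a_{n} < 1$, which puts $1$ outside the spectral radius disk. For $\lvert z\rvert = 1$ with $z \neq 1$, every natural number occurs as a value of $\phi$ on a set of positive $\mu_{\alpha}$-measure, so the return-time greatest common divisor is $1$; this aperiodicity together with topological mixing of $G_{\alpha}$ on $\overline{A}_{1}$ excludes $1$ from the spectrum of $R(z)$ by the standard Gibbs-Markov argument underlying the proof of \cite[Theorem 2.1]{MT:2011}. The main obstacle I expect is the quasi-compactness step, because elements of $\mathcal{B}_{\alpha}$ may be discontinuous along the countably many atom boundaries of $\beta_{\alpha}$ (which accumulate at $0$); what rescues the argument is the observation that a single application of $R(1)$ already smooths out these jumps and lands inside the genuinely Lipschitz functions on $\overline{A}_{1}$, where standard compactness tools apply.
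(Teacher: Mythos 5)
Your proposal is essentially sound and its skeleton coincides with the paper's: the same explicit formula $R_{n}(v)=a_{n}\,(v\circ F_{\alpha,10_{n-1}})\cdot\mathds{1}_{\overline{A}_{1}}$ drives (H1) and the invariance of $\mathcal{B}_{\alpha}$, and (H2)(i) rests in both cases on an Ionescu-Tulcea--Marinescu/Hennion quasi-compactness argument combined with ergodicity of $G_{\alpha}$. The differences are in packaging. For completeness of $(\mathcal{B}_{\alpha},\lVert\cdot\rVert_{\mathcal{B}_{\alpha}})$ and for (H2)(i) the paper verifies the Gibbs--Markov hypotheses (uniform expansion, full linear branches, generating partition) and cites \cite{AD:2001}, whereas you prove completeness by hand and sketch the Lasota--Yorke inequality $\lVert R(1)^{k}v\rVert_{\mathcal{B}_{\alpha}}\leq(\sum_{n}a_{n}^{2})^{k}\lVert v\rVert_{\mathcal{B}_{\alpha}}+\lVert v\rVert_{\infty}$ directly; your observation that a single application of $R(1)$ lands in genuinely Lipschitz functions on $\overline{A}_{1}$ (because each $F_{\alpha,10_{n-1}}$ maps all of $\overline{A}_{1}$ into the single atom $\{\phi=n\}$) is exactly the compact-embedding input needed to apply Hennion's criterion despite the jumps of $\mathcal{B}_{\alpha}$-functions at atom boundaries, so this part is correct. (A minor slip: the contraction factor is $\sum_{n}a_{n}^{2}\leq\max_{n}a_{n}<1$; since $(a_{n})_{n\in\mathbb{N}}$ need not be monotone you cannot replace this by $a_{1}$, but nothing depends on it.) For $z\in\mathbb{D}$ your estimate $\lVert R(z)\rVert_{\mathrm{op}}\leq\sum_{n}\lvert z\rvert^{n}a_{n}<1$ is cleaner than the paper's route via Sarig's renewal identity $T(z)\circ R(z)u=T(z)u-u$, and it excludes $1$ from the entire spectrum rather than merely as an eigenvalue.

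The one place where you assert rather than prove is (H2)(ii) on $\mathbb{S}\setminus\{1\}$. That $\phi$ attains every value in $\mathbb{N}$ on a set of positive measure (so the return times have greatest common divisor $1$) is necessary but is not by itself the argument: one must show that $R(\e^{it})f=f$ forces $\e^{it\phi}$ to be a multiplicative coboundary $f/(f\circ G_{\alpha})$ and then that the full-branch structure of $G_{\alpha}$ rules this out. The paper carries this out concretely: an $L^{2}(\mu_{\alpha})$ computation, using that $V(u)=\e^{-it\phi}\cdot u\circ G_{\alpha}$ is adjoint to $R(z)$ and that $G_{\alpha}$ preserves $\mu_{\alpha}$ on $\overline{A}_{1}$, gives $\lVert V(f)-f\rVert_{2}=0$; ergodicity makes $\lvert f\rvert$ constant; and for each $n$ one locates a point $y_{n}\in\{\phi=n\}$ with $f(y_{n})=f(G_{\alpha}(y_{n}))$, forcing $\e^{itn}=1$ for all $n$, a contradiction. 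Your appeal to ``the standard Gibbs--Markov argument'' points at exactly this, so the approach is not wrong, but as written this step is a citation of folklore rather than a proof, and it is the most delicate part of the proposition; to complete your argument you would need to supply the coboundary computation (or an explicit reference covering this countable-branch, infinite-measure setting), and also note that quasi-compactness of $R(z)$ for $\lvert z\rvert=1$ (which follows from the same Lasota--Yorke bounds since $\e^{it\phi}$ is constant on atoms of $\beta_{\alpha}$) is what upgrades ``not an eigenvalue'' to ``not in the spectrum''.
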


In the proof of the above proposition we will make use of the following lemma.

\begin{lemma}\label{lemma:c_10}
For any $\alpha$-Farey map $F_{\alpha}$, we have that $c_{1, 10_{n-1}} = \mu_{\alpha}(\{ \phi = n \}) = a_{n} = t_{n} - t_{n+1}$, where $\displaystyle{10_{n} \coloneqq (1,\underbrace{0,0, \dots, 0}_{\text{$n$-times}})}$, for each $n \in \mathbb{N}$.
\end{lemma}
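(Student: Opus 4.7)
The plan is to handle the two asserted equalities separately: first derive the closed-form value $c_{1, 10_{n-1}} = a_n$ from the recursion supplied in Lemma~\ref{lamma:PartIIClaim1}, then identify $\mu_\alpha(\{\phi = n\})$ with $a_n$ by a direct geometric computation of the first return set, noting that $a_n = t_n - t_{n+1}$ is immediate from the definition $t_n = \sum_{k \geq n} a_k$.

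For the first equality I would proceed by a short induction on $n$, peeling off the trailing zeros of the word $10_{n-1} = (1,0,\dots,0)$ one at a time via the recursion $c_{n,(\omega_1,\dots,\omega_k,0)} = c_{n,(0)} c_{n+1,\omega}$. The base case $n=1$ is $c_{1,(1)} = 1 - t_2/t_1 = a_1$. For $n \geq 2$, iterating the recursion $n-1$ times yields
\[
c_{1, 10_{n-1}} \;=\; c_{1,(0)}\, c_{2,(0)}\, \cdots\, c_{n-1,(0)} \cdot c_{n,(1)}.
\]
Substituting $c_{k,(0)} = t_{k+1}/t_k$ and $c_{n,(1)} = 1 - t_{n+1}/t_n = a_n/t_n$, the product telescopes to $a_n/t_1$. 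Since $\alpha$ partitions $(0,1)$, we have $t_1 = \sum_{k\in\mathbb{N}} a_k = 1$, giving $c_{1, 10_{n-1}} = a_n$.

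For the identification with $\mu_\alpha(\{\phi=n\})$ I would describe the return set geometrically. For $y \in A_1$, because $F_\alpha(A_k) = A_{k-1}$ for every $k \geq 2$, if $F_\alpha(y) \in A_m$ then the forward orbit visits the atoms $A_m, A_{m-1}, \dots, A_2$ before first returning to $\overline{A}_1$ at time $m$; hence $\{\phi = n\} = A_1 \cap F_\alpha^{-1}(A_n)$. Since $F_\alpha|_{A_1}$ is affine with slope $-1/a_1$, this preimage has Lebesgue measure $a_1 a_n$, and by \eqref{eq:RND} one has $h_\alpha \equiv t_1/a_1 = 1/a_1$ on $A_1$, so $\mu_\alpha(\{\phi = n\}) = (1/a_1)(a_1 a_n) = a_n$. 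Combining with the above and the definition $a_n = t_n - t_{n+1}$ completes the proof.

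There is no substantive obstacle here; both computations are mechanical unwindings of definitions. The only items requiring mild care are confirming the base case $n=1$ (since $10_0 = (1)$ is not covered by the trailing-zero recursion) and invoking the normalisation $t_1 = 1$ so that the telescoping product collapses to $a_n$ rather than to $a_n/t_1$.
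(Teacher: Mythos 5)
Your proof is correct and takes essentially the same route as the paper: the paper likewise identifies $\mu_{\alpha}(\{\phi=n\})$ by computing the Lebesgue measure of the explicit return interval and dividing by the slope $a_{1}$ (equivalently, multiplying by the density $t_{1}/a_{1}$ on $A_{1}$), and it obtains $c_{1,10_{n-1}}=t_{n}-t_{n+1}$ by unwinding the same recursion from \eqref{eq:constants}, phrased there as an induction giving $c_{k,10_{n-1}}=(t_{k+n-1}-t_{k+n})/t_{k}$ for general $k$ before setting $k=1$. Your telescoping product is just the $k=1$ specialisation of that induction, with the normalisation $t_{1}=1$ made explicit.
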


\begin{proof}
By construction of the $\alpha$-Farey map $F_{\alpha}$, we have that $\{ \phi = 1 \} = [1 - a_{1}t_{1}, 1 - a_{1}t_{2}]$ and that $\{ \phi = n \} = (1- a_{1} t_{n}, 1- a_{1} t_{n+1}]$, for all integers $n > 1$.  Thus,
\begin{equation}\label{eq:nu_phi_n}
\mu_{\alpha}(\{ \phi = n \})
= \int \mathds{1}_{\{ \phi = n \}} \cdot \frac{\mathrm{d}\mu_{\alpha}}{\mathrm{d}\lambda} \, \mathrm{d}\lambda
= {a_{1}}^{-1} \lambda(\{ \phi = n \})
= t_{n} - t_{n+1}.
\end{equation}
We will now show by induction on $n$ that, for each $k \in \mathbb{N}$,
\begin{equation}\label{eq:constant_01}
c_{k, 10_{n-1}} = \frac{t_{k+n-1}-t_{k+n}}{t_{k}}.
\end{equation}
From \eqref{eq:defofThat}, we have that $c_{k,(1)} = 1 - t_{k+1}/t_{k} = (t_{k} - t_{k+1})/t_{k}$, for each $k \in \mathbb{N}$.  Suppose that the statement in \eqref{eq:constant_01} is true for some $n \in \mathbb{N}$.  From \eqref{eq:constants}, we have that 
\[
c_{k, 10_{n}} \coloneqq (t_{k+1}/t_{k}) c_{k+1, 10_{n-1}},
\]
for each $k \in \mathbb{N}$, which gives
\[
c_{k, 10_{n}} = \frac{t_{k+1}}{t_{k}} \frac{t_{k+n}-t_{k+n+1}}{t_{k+1}} = \frac{t_{k+n}-t_{k+n+1}}{t_{k}}.
\]
This completes the proof of the statement in \eqref{eq:constant_01}.

Setting $k = 1$ in \eqref{eq:constant_01}, we obtain that $c_{1, 10_{n-1}} = t_{n} - t_{n+1}$, for all $n \in \mathbb{N}$.  Combining this with \eqref{eq:nu_phi_n}, completes the proof.
\end{proof}

\begin{proof}[Proof of Proposition~\ref{prop:conditions_H1_H2}]
It is shown in \cite[Section 1]{AD:2001} that the pair $(\mathcal{B}_{\alpha}, \lVert \cdot \rVert_{\mathcal{B}_{\alpha}})$ forms a Banach space.

We now prove that condition \textnormal{(H1)} holds and the invariance of $\mathcal{B}_{\alpha}$.  For this, let $u \in \mathcal{B}_{\alpha}$ and fix $k \in \mathbb{N}$.  Applying Lemmas~\ref{lamma:PartIIClaim1} and \ref{lemma:c_10} we have that
\[
R_{k}(u) = \mathds{1}_{\overline{A}_{1}} \cdot \widehat{F}_{\alpha}^{k}(\mathds{1}_{\{ \phi = k \}} \cdot u) = \mathds{1}_{\overline{A}_{1}} \cdot \mu_{\alpha}(\{ \phi = k \}) \cdot u \circ F_{\alpha, 10_{k-1}}.
\]
Hence, by definition of the partition $\beta_{\alpha}$, we have that $\lVert R_{k}(u) \rVert_{\mathcal{B}_{\alpha}} \leq \mu_{\alpha}(\{ \phi = k \}) \lVert u \rVert_{\mathcal{B}_{\alpha}}$, and so, the operator $R_{k}$ maps $\mathcal{B}_{\alpha}$ into itself.  Further, by definition of $R(1)$, this gives that
\[
\lVert R(1)u \rVert_{\mathcal{B}_{\alpha}}
\leq \sum_{n \in \mathbb{N}} \lVert R_{n}(u) \rVert_{\mathcal{B}_{\alpha}}
\leq \sum_{n \in \mathbb{N}} \mu_{\alpha}(\{ \phi = n \}) \lVert u \rVert_{\mathcal{B}_{\alpha}} = \lVert u \rVert_{\mathcal{B}_{\alpha}},
\]
and so, the operator $R(1)$ maps $\mathcal{B}_{\alpha}$ into itself.  Linearity of $R_{k}$ and $R(1)$ follows from the linearity of $\widehat{F}_{\alpha}$.

For the proof of property \textnormal{(H2)(i)}, observe that $G_{\alpha}$ is a piecewise linear expansive map with the following properties.
\begin{enumerate}[label={(\roman*)},leftmargin=*]
\item  On the set $\{ \phi = n \}$, the absolute value of the derivative of $G_{\alpha}$ is equal to $1/(t_{n} - t_{n+1})$.  Moreover, since $( t_{n} )_{n \in \mathbb{N}}$ is a positive monotonically decreasing sequence which is bounded above by $1$, it follows that there exists a constant $c > 1$ with $1/(t_{n} - t_{n+1}) > c$, for all $n \in \mathbb{N}$.
\item The partition $\beta_{\alpha}$ is a countable-infinite partition of $A_{1}$ and $G_{\alpha}(\{ \phi = 1 \}) = \overline{A}_{1}$ and $G_{\alpha}(\{ \phi = n \}) = A_{1}$ if $n \geq 2$, and hence, $\mu_{\alpha}(G_{\alpha}(\{ \phi = n \})) = \mu_{\alpha}(A_{1}) = 1$, for all $n \in \mathbb{N}$.  Moreover, the $\sigma$-algebra generated by $\{ G_{\alpha}^{-n}(\{ \phi = m \}) : n, m \in \mathbb{N} \}$ is equal to the Borel $\sigma$-algebra on $\overline{A}_{1}$.
\item For each $n \in \mathbb{N}$ and $\psi \coloneqq (1, \underbrace{0, 0, \dots, 0}_{(n-1)-\text{times}}, 1 )$, we have that
\[
F_{\alpha, \psi}([0, 1]) = \overline{\{ \phi = n \}} \quad \text{and} \quad \frac{\mathrm{d} \mu_{\alpha} \circ F_{\alpha, \psi}}{\mathrm{d} \mu_{\alpha}} = t_{n} - t_{n+1} = a_{n}.
\]
\end{enumerate}
Given these properties, \textnormal{(H2)(i)} is a consequence of \cite[Theorem 1.6]{AD:2001}: the proof of which is based on the \textit{Theorem on the difference of two norms} by Ionescu-Tulcea and Marinescu \cite{ITM:1950}.

For the proof of property \textnormal{(H2)(ii)},  we distinguish between the cases $z \in \mathbb{D}$ and  $z \in \mathbb{S}^{1} \setminus \{ 1\}$.
\begin{description}[leftmargin=*]
\item[\textit{Case 1}. ($z \in \mathbb{D}$)]
Sarig showed in \cite[Section 3]{S:2002} that
\begin{equation}\label{eq:relation}
T(z) \circ R(z) u = (T(z) u) - u,
\end{equation}
where the operators $T(z), T_{n} \colon \mathcal{L}^{1}_{\mu_{\alpha}}([0, 1]) \to \mathcal{L}_{\mu_{\alpha}}^{1}([0, 1])$ are defined by
\[
T(z) \coloneqq \sum_{n \in \mathbb{N}_{0}} z^{n} \cdot T_{n} \quad \text{and} \quad T_{n}(u) \coloneqq \mathds{1}_{A_{1}} \cdot \widehat{F}_{\alpha}^{n}(\mathds{1}_{A_{1}} \cdot u).
\]
By way of contradiction, suppose that $1$ is an eigenvalue of $R(z)$ restricted to $\mathcal{B}_{\alpha}$.  Then there exists a non-zero measurable function $w \in \mathcal{B}_{\alpha}$ such that $R(z) w = w$.  Substituting this into \eqref{eq:relation} shows that $w$ is equal to zero $\mu_{\alpha}$-almost everywhere, which gives a contradiction.
\item[\textit{Case 2}. ($z \in \mathbb{S}^{1} \setminus \{ 1\}$)] We will now show that $1$ is not an eigenvalue of $R(z)$.  (This part of the proof is based on the proof of \cite[Lemma 6.7]{G:2004}.) Since $z \in \mathbb{S}^{1} \setminus \{ 1\}$, there exists a $t \in (0, 2\pi)$ such that $z = \e^{i t}$.  Suppose that $R(z) f = f$, for some non-zero $f \in \mathcal{B}_{\alpha}$.  Let $\mathcal{L}^{2}_{\mu_{\alpha}}(A_{1})$ denote the class of complex-valued measurable functions $f$ with domain $\overline{A}_{1}$ for which  $\lvert f \rvert^{2}$ is $\mu_{\alpha}$-integrable, and let it be equipped with the standard $\mathcal{L}^{2}_{\mu_{\alpha}}$-inner product,
\[
\langle u_{1}, u_{2} \rangle \coloneqq \int u_{1} \cdot \overline{u}_{2} \, \mathrm{d}\mu_{\alpha}.
\]
For each $u \in \mathcal{L}^{2}_{\mu_{\alpha}}(A_{1})$ set $\lVert u \rVert_{2} \coloneqq \langle u, u \rangle^{1/2}$.  Further, set $\mathcal{L}^{\infty}(A_{1}) \coloneqq \{ v: \overline{A}_{1} \to \mathbb{C} : \lVert v\rVert_{\infty} < +\infty \}$ and define $V \colon \mathcal{L}^{\infty} \to \mathcal{L}^{\infty}$ by $V(u) \coloneqq \e^{-i t \phi} \cdot u \circ G_{\alpha}$.  Noting that $R(z)v = R(1)(\e^{i t \phi} \cdot v)$ and using \eqref{eq:TransferOfInduced}, we have that, for all $v \in \mathcal{B}_{\alpha}$ and $u \in \mathcal{L}^{\infty}(A_{1})$,
\[
\langle u, R(z)v \rangle
= \int \overline{u} \cdot R(z)v \, \mathrm{d}\mu_{\alpha}
= \int \overline{u} \cdot R(1)(\e^{i t \phi} \cdot v) \, \mathrm{d}\mu_{\alpha}
= \int \overline{u} \circ G_{\alpha} \cdot \e^{i t \phi} \cdot v \, \mathrm{d}\mu_{\alpha}
= \langle V(u), v \rangle.
\]
Further,
\begin{equation}
\begin{aligned}
{\lVert V(f) - f \rVert_{2}}^{2} 
&= {\lVert V(f) \rVert_{2}}^{2} - 2  \mathfrak{Re} \langle V(f), f \rangle + {\lVert f \rVert_{2}}^{2}\\ 
&= {\lVert V(f) \rVert_{2}}^{2} - 2  \mathfrak{Re} \langle f, R(z)(f) \rangle + {\lVert f \rVert_{2}}^{2}\\ 
&= {\lVert V(f) \rVert_{2}}^{2} - 2  \mathfrak{Re} \langle f, f \rangle + {\lVert f \rVert_{2}}^{2}\\ 
&= {\lVert V(f) \rVert_{2}}^{2} - {\lVert f \rVert_{2}}^{2}
\end{aligned}
\label{eq:*}
\end{equation}
and, as $G_{\alpha}$ preserves the measure $\mu_{\alpha}$ restricted to $\overline{A}_{1}$, we have that
\begin{equation}\label{eq:**}
{\lVert V(f) \rVert_{2}}^{2} = \int \lvert f \rvert^{2} \circ G_{\alpha} \, \mathrm{d}\mu_{\alpha} = \int \lvert f \rvert^{2} \, \mathrm{d}\mu_{\alpha} = {\lVert f \rVert_{2}}^{2}.
\end{equation}
Combining \eqref{eq:*} and \eqref{eq:**}, it follows that $V(f) - f$ vanishes $\mu_{\alpha}$-almost everywhere on $\overline{A}_{1}$.  Thus, by taking the modulus, the ergodicity of $G_{\alpha}$ implies that $\lvert f \rvert$ is equal to a constant, \mbox{$\mu_{\alpha}$-almost} everywhere on $\overline{A}_{1}$.  As $f$ does not vanish $\mu_{\alpha}$-almost everywhere, this constant is non-zero, and so, we obtain that  $\e^{-i t \phi} = f / (f \circ G_{\alpha})$ almost everywhere on $\overline{A}_{1}$.  Now, for each $n \in \mathbb{N}$, let $I_{n} \subseteq \{\phi=n\}$ be the interval of positive measure, such that $G_{\alpha} (I_{n}) = \{\phi=n \}$ and let
\[
J_{n}\coloneqq \{ x\in I_{n}\colon G_{\alpha}(x) \notin I_{n} \; \text{and} \; \e^{it\phi(x)} = f(x)/f\circ G_{\alpha}(x)\}.
\]
Since $\e^{it\phi} = f/f\circ G_{\alpha}$ almost everywhere, and since the map $G_{\alpha}$ is linear and expanding, we have that $\mu_{\alpha} (J_{n}) > 0$. In particular, the set $J_{n}$ is non-empty.  We claim that there exists $y_{n} \in J_{n}$ such that $f(y_{n}) = f\circ G_{\alpha}(y_{n})$, for each $n\in\mathbb{N}$. By way of contradiction, suppose that $f(x)\neq f\circ G_{\alpha}(x)$, for all $x\in J_{n}$.  Since $f$ is constant almost everywhere on $\overline{A}_{1}$, we have that $f$ is constant almost everywhere on $J_{n} \cup G_{\alpha}(J_{n})$, which gives an immediate contradiction to the assumption that $f(x)\neq f\circ G_{\alpha}(x)$, for all $x\in J_{n}$. Therefore, we have that there exists $y_{n}\in \overline{A}_{1}$, such that $\phi(y_{n}) = n$, $f(y_{n}) = f\circ G_{\alpha}(y_{n})$ and $\e^{it\phi(y_{n})} = f(y_{n})/f\circ G_{\alpha}(y_{n})$, for each $n\in\mathbb{N}$. Hence, we have that $\e^{itn}=1$, for all $n\in \mathbb{N}$, contradicting the initial choice of $t$.
\end{description}
\end{proof}

Finally, in preparation for the proof of Theorem~\ref{thm:main1}, let us make note of the following well known properties of slowly varying functions.

\begin{lemma}\label{lem:powerslowly}
Let  $L\colon [a, +\infty] \to \mathbb{R}$ be a positive slowly varying function, for some $a \in \mathbb{N}_{0}$.
\begin{enumerate}[label={(\roman*)},leftmargin=*]
\item\label{SV(i)} \textup{\cite[p.\ 2]{Seneta:1976}} For a compact interval  $I \subset \mathbb{R}^{+}$ we have that 
\[
\lim_{x \to +\infty} L( p x)/L(x) = 1
\]
holds uniformly with respect to $p \in I$, and hence, for a fixed $b \in \mathbb{R}^{+}$,
\[
\lim_{x \to +\infty} L(x - b)/L(x) = 1.
\]
\item\label{SV(ii)} \textup{\cite[p.\ 18]{Seneta:1976}} For a fixed $b \in \mathbb{R}^{+}$ we have that
\[
\lim_{x \to +\infty} L(x) x^{-b} = 0
\quad \text{and} \quad
\lim_{x \to +\infty} L(x) x^{b} = +\infty.
\]
\item\label{SV(v)} \textup{\cite[p.\ 41]{Seneta:1976}} If $L$ is continuous, strictly increasing and 
\[
\lim_{x \to +\infty} L(x) = +\infty,
\]
then, for a fixed $c\in(0,1)$,
\[
\lim_{x \to +\infty} L^{-1}( c x)/L^{-1}(x) = 0.
\]
\item\label{SV(vi)} \textup{\cite[p.\ 50]{Seneta:1976}} If $M \colon [a +1, +\infty)\ \to \mathbb{R}$ is defined to be the linear interpolation of the function 
\[
n \mapsto \sum_{k = a+1}^{n} L(k) k^{-1},
\]
then $M$ is a slowly varying function and
\[
\lim_{x \to \infty} \frac{L(x)}{M(x)} = 0. 
\]
\end{enumerate}
\end{lemma}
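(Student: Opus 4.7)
The plan is to derive all four assertions directly from standard properties of slowly varying functions in the Karamata theory, since each is a classical result explicitly cited to Seneta's monograph. Because the paper treats these as known facts, I would present the argument as a short sequence of reductions rather than a self-contained derivation; the main work is to check that our hypotheses match those required by the cited theorems, and the entire proof is essentially a manifest of these checks.

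For part \ref{SV(i)}, I would invoke the uniform convergence theorem for slowly varying functions: any measurable slowly varying $L$ satisfies $L(px)/L(x) \to 1$ uniformly for $p$ in any compact subset of $(0,\infty)$. The second statement then follows by writing $L(x-b)/L(x) = L(p_x x)/L(x)$ with $p_x \coloneqq 1 - b/x$; since $p_x \to 1$, we can enclose $p_x$ in a compact interval around $1$ for all sufficiently large $x$, and apply uniform convergence.

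For part \ref{SV(ii)}, I would appeal to the Karamata representation of a slowly varying function, which shows that $\log L(x)/\log x \to 0$. This directly yields $L(x) x^{-b} \to 0$ and $L(x) x^{b} \to +\infty$ for any fixed $b > 0$. For part \ref{SV(v)}, the key observation is that, under the stated hypotheses, the inverse function $L^{-1}$ is rapidly varying of index $+\infty$: indeed, given $c \in (0,1)$, if we had $\limsup L^{-1}(cx)/L^{-1}(x) > 0$ along some sequence $x_n \to \infty$, then writing $y_n \coloneqq L^{-1}(x_n)$ and using the slow variation of $L$ we would obtain $L(y_n')/L(y_n) \to 1$ along a subsequence with $y_n' \coloneqq L^{-1}(cx_n)$, yet $L(y_n')/L(y_n) = c < 1$, a contradiction.

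Finally, for part \ref{SV(vi)}, I would use the Karamata-type integral theorem: if $L$ is slowly varying and locally integrable, then the function $M(x) = \sum_{k \le x} L(k)/k$ (or its integral analogue) is itself slowly varying, and the dominating nature $L(x) = \mathfrak{o}(M(x))$ follows because $M(x)$ diverges while $L(x)/M(x)$ tends to zero by a standard Cesàro-type argument using $L(n)/n = \mathfrak{o}(1) \cdot L(n) \log n / n$ combined with the divergence of the harmonic-type sum. I do not anticipate a genuine obstacle in any of the four parts, as each is a direct translation of a result in Seneta \cite{Seneta:1976} (or equivalently in \cite{BGT:1987}); the only care needed is in part \ref{SV(v)}, where the continuity and monotonicity hypotheses are essential to identify $L^{-1}$ unambiguously as a function from which rapid variation can be read off.
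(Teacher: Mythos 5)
Your proposal is correct and takes essentially the same route as the paper, which in fact offers no proof of this lemma beyond the page references to Seneta \cite{Seneta:1976} embedded in the statement; invoking the uniform convergence theorem, the Karamata representation, and the integral theorem for slowly varying functions, together with the hypothesis checks you perform (in particular the compactness of the range of $p_x=1-b/x$ in \ref{SV(i)} and of $y_n'/y_n$ in \ref{SV(v)}), is exactly what is intended. The one place to tighten is \ref{SV(vi)}: the relation $L=\mathfrak{o}(M)$ should be read off from $M(\lambda x)-M(x)\sim L(x)\log\lambda$ for every $\lambda>1$ (which gives both the slow variation of $M$ and $\liminf M(x)/L(x)\geq\log\lambda$), rather than from the claim that $M$ diverges --- which is not needed and not automatic for a general slowly varying $L$, e.g.\ $L(x)=(\log x)^{-2}$.
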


\section{Extending convergence: Proof of Theorem~\ref{thm:main2}}\label{J_Diploma_Thesis}

\begin{proof}[Proof of Theorem~\ref{thm:main2}]
Let us first recall that, for $x\in (0,1]$ and $n\in \mathbb{N}$,
\[
\begin{aligned}
\left(\mathcal{P}_{\alpha}^{n+1}(h_{\alpha}\cdot v)\right)(x) &= \mathcal{P}_{\alpha}\left(\left(\mathcal{P}_{\alpha}^{n}(h_{\alpha}\cdot v)\right)(x)\right)\\
&= \left(\mathcal{P}_{\alpha}^{n}(h_{\alpha}\cdot v)\right) (F_{\alpha,0}(x))\cdot\left\lvert F'_{\alpha,0}(x)\right\rvert + \left(\mathcal{P}_{\alpha}^{n}(h_{\alpha}\cdot v)\right) (F_{\alpha,1}(x))\cdot\left\lvert F'_{\alpha,1}(x)\right\rvert,
\end{aligned}
\]
which gives
\begin{equation}\label{eq:Ext}
\left(\mathcal{P}^{n}_{\alpha}(h_{\alpha}\cdot v)\right)(F_{\alpha,0}(x)) = 
\frac{(\mathcal{P}_{\alpha}^{n+1}(h_{\alpha}\cdot v))(x) - (\mathcal{P}_{\alpha}^{n}(h_{\alpha}\cdot v)) (F_{\alpha,1}(x))\cdot \lvert F'_{\alpha,1}(x) \rvert }{\lvert F'_{\alpha,0}(x) \rvert}.
\end{equation}
We proceed by induction as follows. The start of the induction is given by the assumption in the theorem. For the inductive step, assume that the statement holds for $\bigcup_{i=1}^{k} A_i$, for some $k\in\mathbb{N}$. Then consider some arbitrary $y \in A_{k+1}$, and let $x$ denote the unique element in $A_k$ such that $F_{\alpha,0}(x)=y$.  Using \eqref{eq:Ext}, the fact that $\widehat{F}_{\alpha} = h_{\alpha}^{-1}\mathcal{P}_{\alpha}(h_{\alpha}\cdot v)$ and the inductive hypothesis in tandem with the assumption that $\lim w_{n}/w_{n+1} = 1$, we obtain that
\[
\begin{aligned}
w_{n} \left(\widehat{F}^{n}_{\alpha}(v)\right)(y) &= w_{n} \left(\widehat{F}^{n}_{\alpha}(v)\right)(F_{\alpha,0}(x)) 
=\frac{w_{n}(\mathcal{P}^{n}(h_{\alpha}\cdot v ))(F_{\alpha,0}(x))}{h_{\alpha}(F_{\alpha,0}(x))}\\
&= \frac{w_{n}(\mathcal{P}_{\alpha}^{n+1}(h_{\alpha}\cdot v))(x)-\lvert F'_{\alpha,1}(x)\rvert \cdot w_n (\mathcal{P}_{\alpha}^{n}(h_{\alpha}\cdot v))(F_{\alpha,1}(x))}
{h_{\alpha}(F_{\alpha,0}(x))\cdot \lvert{F'_{\alpha,0}(x)}\rvert}\\
&\sim \frac{h_{\alpha}(x)-h_{\alpha}(F_{\alpha,1}(x)) \cdot \lvert F'_{\alpha,1}(x)\rvert}{h_{\alpha}(F_{\alpha,0}(x)) \cdot \lvert F'_{\alpha,0}(x)\rvert} \Gamma_{\delta} \int v \mathrm{d}\mu_{\alpha}
=\Gamma_{\delta}\int v \mathrm{d}\mu_{\alpha},
\end{aligned}
\]
where the last equality is a consequence of the eigenequation $\mathcal{P}_{\alpha}h_{\alpha}=h_{\alpha}$.
\end{proof}

\begin{remark}
Using an analogous proof to that given above, one can obtain that the result of Theorem~\ref{thm:main2} holds for other interval maps, such as Gibbs-Markov maps, Thaler maps and Pomeau-Manneville maps.
\end{remark}

\section{Proof of Theorem~\ref{thm:main1}}\label{sec:main1}

\subsection{Asymptotics of the $\alpha$-Farey transfer operator for $\delta = 1$.}\label{section:Counterexamples2.2}

Throughout this section, we let $([0, 1], \mathscr{B}, \mu_{\alpha}, F_{\alpha})$ be a $1$-expansive $\alpha$-Farey system.   In order to prove Theorem~\ref{thm:main1}~\ref{i}, we will use the following auxiliary results (Lemmas~\ref{lem:convergnecelemma} and \ref{lem:jsigman}).  Before which we require the following notation.  Define the function $\ell: [0, +\infty) \to \mathbb{R}$ by
\begin{equation}\label{eq:ell}
\ell(x) \coloneqq 
\begin{cases}
x/2 + 1/2 & \text{if} \; x \in [0,1],\\
t_{n+1} (x -n) + w_{n}  & \text{if} \; x \in [n, n+1], \; \text{for} \; n \in \mathbb{N}.
\end{cases}
\end{equation}
Note that $\ell$ is the linear interpolation of the function $n \mapsto w_{n}$ defined on $\mathbb{N}_{0}$, where  $w_{0} \coloneqq 1/2$.  Further, for $\sigma \in \mathbb{R}^{+}$, define
\[
j_{\sigma}(x) \coloneqq x- \ell^{-1}((1 + \sigma)^{-1}\ell(x)),
\]
for all $x \geq \ell^{-1}((1 + \sigma)/2)$.

\begin{lemma}\label{lem:convergnecelemma}
For a given $\sigma \in \mathbb{R}^{+}$, we have that $j_{\sigma}(x) \sim x$.
\end{lemma}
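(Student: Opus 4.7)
The plan is to reduce the claim to a direct application of Lemma~\ref{lem:powerslowly}\ref{SV(v)}. Since $j_{\sigma}(x)/x = 1 - \ell^{-1}((1+\sigma)^{-1}\ell(x))/x$, it suffices to show that $\ell^{-1}((1+\sigma)^{-1}\ell(x))/x \to 0$ as $x \to \infty$. Writing $c \coloneqq (1+\sigma)^{-1} \in (0,1)$ and substituting $y = \ell(x)$ (so that $\ell^{-1}(y) = x$), this is exactly the statement that
\[
\lim_{y \to \infty} \frac{\ell^{-1}(c y)}{\ell^{-1}(y)} = 0,
\]
which is the conclusion of Lemma~\ref{lem:powerslowly}\ref{SV(v)} applied to $L = \ell$.

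First, therefore, I would check that $\ell$ meets the hypotheses of that lemma. Continuity and strict monotonicity are immediate from the definition of $\ell$ in \eqref{eq:ell}, since the slopes $1/2$ on $[0,1]$ and $t_{n+1} > 0$ on each $[n,n+1]$ are positive. The assumption that $(t_n)_{n \in \mathbb{N}}$ is non-summable (which is a standing hypothesis on the partition $\alpha$) gives $w_n \to \infty$, and since $\ell(x) \geq w_{\lfloor x \rfloor}$, we have $\ell(x) \to \infty$.

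The key remaining ingredient is that $\ell$ is slowly varying. Since we are in the $1$-expansive case, Lemma~\ref{lem:powerslowly}\ref{SV(vi)} (applied to $L(n) = n\, a_n$, so that $L(k)/k = a_k$ and the partial sums of $a_k$ relate to $w_n$ via the identity $w_n = n t_{n+1} + \sum_{j=1}^n j a_j$ noted in the remark following Theorem~\ref{thm:main1}) shows that the linear interpolation of $n \mapsto w_n$ is slowly varying; combined with $n t_n / w_n \to 0$, this forces $\ell$ to be slowly varying. Alternatively, one can argue directly: for any $\eta > 0$,
\[
\frac{\ell(\eta x)}{\ell(x)} = \frac{w_{\lfloor \eta x \rfloor} + O(t_{\lceil \eta x \rceil})}{w_{\lfloor x \rfloor} + O(t_{\lceil x \rceil})} \to 1
\]
since $w_{\lfloor \eta x \rfloor}/w_{\lfloor x \rfloor} \to 1$ (as $(w_n)$ is slowly varying) and $t_n / w_n \to 0$.

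With $\ell$ confirmed to be a continuous, strictly increasing, slowly varying function tending to infinity, Lemma~\ref{lem:powerslowly}\ref{SV(v)} applied with $c = (1+\sigma)^{-1}$ yields $\ell^{-1}(c \ell(x))/x \to 0$, and hence $j_{\sigma}(x)/x \to 1$, completing the proof. The only step that requires care is the verification that $\ell$ is slowly varying, which is the one place the hypothesis $\delta = 1$ enters; everything else is a formal manipulation.
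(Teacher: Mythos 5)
Your proof is correct and takes essentially the same route as the paper: the paper likewise writes $x=\ell^{-1}(\ell(x))$ and concludes by citing that $\ell$ is positive and strictly increasing together with Lemma~\ref{lem:powerslowly}~\ref{SV(v)}. Your additional verification that $\ell$ is slowly varying and tends to infinity is a welcome (and necessary) elaboration that the paper leaves implicit at this point; just note that the clean application of Lemma~\ref{lem:powerslowly}~\ref{SV(vi)} is to $L(k)=k\,t_{k}$ (so that $L(k)/k=t_{k}$ and the partial sums are exactly $w_{n}$), not to $L(k)=k\,a_{k}$.
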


\begin{proof}
For $\sigma \in \mathbb{R}^{+}$, we have that
\[
\lim_{x \to \infty} \frac{j_{\sigma}(x)}{x} = 1 - \lim_{x \to \infty} \frac{\ell^{-1}( \ell(x) (1 + \sigma)^{-1} )}{x}  = 1 - \lim_{x \to \infty} \frac{\ell^{-1}( \ell(x) (1 + \sigma)^{-1} )}{\ell^{-1}(\ell(x))} = 1,
\]
where the last equality follows from the fact that $\ell$ is a positive, strictly monotonically increasing function and Lemma~\ref{lem:powerslowly} \ref{SV(v)}.
\end{proof}

Here and in the sequel we will use the following notation.  For $r \in \mathbb{R}$ we let $\lfloor r \rfloor$ denote the largest integer not exceeding $r$.

\begin{lemma}\label{lem:jsigman}
Let $(\delta_{j})_{j \in \mathbb{N}}$ denote a sequence of positive real numbers such that  $\sum_{j = 1}^{\infty} \delta_{j} t_{j} < \infty$.  If the wandering rate is moderately increasing, then
\[
\lim_{n \to \infty} \sum_{j = 0}^{n} \frac{w_{n}}{w_{n - j}} \delta_{j + 1} t_{j + 1} = \sum_{j = 1}^{\infty} \delta_{j} t_{j}.
\]
\end{lemma}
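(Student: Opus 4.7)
My plan is to decompose $S_n := \sum_{j=0}^n (w_n/w_{n-j}) \delta_{j+1} t_{j+1}$ into three regions and to handle each by a different tool: slow variation, the moderately increasing hypothesis, and careful tail estimation. Given $\varepsilon > 0$, I would first fix $M \in \mathbb{N}$ so that $\sum_{j \geq M} \delta_{j+1} t_{j+1} < \varepsilon$, and set the cutoff $k_n := \lceil n w_n^{-2} \rceil$ supplied by the moderately increasing hypothesis, which gives a constant $C > 0$ with $w_n/w_{k_n} \leq C$. I then write $S_n = I_n^{(1)} + I_n^{(2)} + I_n^{(3)}$ corresponding to $j \in \{0, \ldots, M-1\}$, $\{M, \ldots, n - k_n\}$, and $\{n - k_n + 1, \ldots, n\}$, respectively.

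For the initial piece $I_n^{(1)}$, the slow variation of $w_n$ (valid in the $1$-expansive case) together with Lemma~\ref{lem:powerslowly}\ref{SV(i)} gives $\lim_n w_n/w_{n-j} = 1$ for each fixed $j \leq M-1$, so $I_n^{(1)} \to \sum_{j=0}^{M-1} \delta_{j+1} t_{j+1}$ as $n \to \infty$. For the intermediate piece $I_n^{(2)}$, the inequality $n - j \geq k_n$ combined with monotonicity of the wandering rate and the moderately increasing hypothesis yields $w_n/w_{n-j} \leq w_n/w_{k_n} \leq C$, so $I_n^{(2)} \leq C \sum_{j \geq M} \delta_{j+1} t_{j+1} < C\varepsilon$.

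The main obstacle will be the tail piece $I_n^{(3)}$, where the weight $w_n/w_{n-j}$ can be as large as $2 w_n$. Reindexing by $\ell = n - j \in \{0, \ldots, k_n - 1\}$ and using $w_\ell \geq w_0 = 1/2$, I would bound
\[
I_n^{(3)} \;\leq\; 2 w_n \sum_{m = n - k_n + 2}^{n+1} \delta_m t_m.
\]
The hard part is then to show this quantity tends to zero. I would exploit the $1$-expansive asymptotics $t_m \sim l(m)/m$ with $l$ slowly varying together with the monotonicity of $(t_m)_{m \in \mathbb{N}}$, so that $\sum_{m = n - k_n + 2}^{n+1} t_m$ is of order $k_n \cdot l(n)/n = l(n)/w_n^2$. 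Combined with the boundedness of $(\delta_j)_{j \in \mathbb{N}}$ (which is available in the applications of the lemma to Theorem~\ref{thm:main1}\ref{i}, where $\delta_j \leq \|v\|_{\infty}$ via Lemma~\ref{lem:partIIclaim3}), this forces $I_n^{(3)}$ to be of order $l(n)/w_n$, which vanishes in the limit by Lemma~\ref{lem:powerslowly}\ref{SV(vi)}.

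Putting the three estimates together and letting $M \to \infty$ and $\varepsilon \to 0$, I obtain $\limsup_n S_n \leq \sum_{j=1}^\infty \delta_j t_j$. For the matching bound $\liminf_n S_n \geq \sum_{j=1}^\infty \delta_j t_j$, I would apply Fatou's lemma to the counting measure on $\mathbb{N}_0$, using the pointwise limit $(w_n/w_{n-j}) \delta_{j+1} t_{j+1} \to \delta_{j+1} t_{j+1}$ for each fixed $j$. The two bounds together yield the claimed limit.
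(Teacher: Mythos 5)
Your argument is correct, and its skeleton coincides with the paper's: both proofs split the sum into three regions, both place the critical cutoff at $n-\lceil n\,{w_{n}}^{-2}\rceil$ so that the moderately increasing hypothesis bounds the weight $w_{n}/w_{n-j}$ on the middle region, and both handle the extreme tail identically, namely by bounding the weight there by $2w_{n}$ and the sum of at most $\lceil n\,{w_{n}}^{-2}\rceil$ terms of size $O(l(n)/n)$ to get a contribution of order $l(n)/w_{n}$, which vanishes by Lemma~\ref{lem:powerslowly}~\ref{SV(vi)}. The difference lies in the bulk: the paper cuts at $j_{\sigma}(n)=n-\ell^{-1}((1+\sigma)^{-1}\ell(n))$, bounds the weight by $1+\sigma$ there, and needs Lemma~\ref{lem:convergnecelemma} (that $j_{\sigma}(n)\sim n$) to see that the middle region is eventually a tail of the convergent series; you instead cut at a fixed $M$, use $w_{n}/w_{n-j}\to 1$ termwise on the finite initial segment, and absorb everything between $M$ and $n-k_{n}$ into $C\sum_{j\geq M}\delta_{j+1}t_{j+1}$. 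This dispenses with $j_{\sigma}$ and Lemma~\ref{lem:convergnecelemma} entirely, at no cost. One point where you are more careful than the paper: you say explicitly that the tail estimate needs $(\delta_{j})_{j\in\mathbb{N}}$ to be bounded, a hypothesis absent from the lemma's statement but tacitly imposed by the paper's opening ``without loss of generality assume $\sup\{\delta_{j}\}=1$''; the hypothesis is genuinely needed (an unbounded $(\delta_{j})$ concentrated on sparse indices can keep $w_{n}\delta_{n+1}t_{n+1}$ bounded away from zero while $\sum_{j}\delta_{j}t_{j}$ converges), and, as you observe, it is satisfied in the application to Theorem~\ref{thm:main1}~\ref{i} via Lemma~\ref{lem:partIIclaim3}. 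Finally, your Fatou argument for the lower bound works, though the observation that $w_{n}/w_{n-j}\geq 1$ for all $0\leq j\leq n$ gives it in one line.
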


\begin{proof}
Without loss of generality, assume that $\sup \{ \delta_{j} : j \in \mathbb{N} \} = 1$ and let $\sigma \in \mathbb{R}^{+}$ be fixed.  By definition of $\ell$, we have, for $n \geq \ell^{-1}((1 + \sigma)/2)$, that
\[
\begin{aligned}
&\sum_{j = 0}^{n} \frac{w_{n}}{w_{n - j}} \delta_{j + 1} t_{j + 1}\\
&\leq
\frac{\ell(n)}{\ell(n - j_{\sigma}(n))} \sum_{j=0}^{\lfloor j_{\sigma}(n) \rfloor} \delta_{j+1} t_{j+1} + 
\frac{\ell(n)}{\ell(n (\ell(n))^{-2})} \sum_{j = \lfloor j_{\sigma}(n) \rfloor + 1}^{\lfloor n - n (\ell(n))^{-2} \rfloor} \delta_{j +1} t_{j+1} +
2 \ell(n) \sum_{j= \lfloor n - n (\ell(n))^{-2} \rfloor + 1}^{n} t_{j+1}.
\end{aligned}
\]
By definition of $j_{\sigma}(n)$, we have that $\ell(n)/\ell(n - j_{\sigma}(n)) = (1 + \sigma)^{-1}$.  Further, by Lemma~\ref{lem:powerslowly} \ref{SV(vi)} and since $( t_{j} )_{j \in \mathbb{N}}$ is regularly varying sequence of order $-1$, we have that,
\[
\lim_{n \to \infty} 2 \ell(n) \sum_{j= \lfloor n - n (\ell(n))^{-2} \rfloor + 1}^{n} t_{j+1} \leq \lim_{n \to \infty} \frac{2 t_{n+1} n}{\ell(n)} = 0.
\]
If $\lfloor j_{\sigma}(n) \rfloor + 1 > \lfloor n - n (\ell(n))^{-2} \rfloor$, then this completes the proof.  Otherwise, note that, by Lemma~\ref{lem:powerslowly} \ref{SV(ii)}, we have that $\ell$ is a slowly varying function.  Also, since $\ell$ is an unbounded monotonically increasing function we have that $n - n (\ell(n))^{-2} \sim n$ and, by Lemma~\ref{lem:convergnecelemma}, we have that $j_{\sigma}(n) \sim n$.  The above three statements in tandem with the assumptions that $\sum_{j = 1}^{\infty} \delta_{j} t_{j} < \infty$ and that the wandering rate is moderately increasing, yield the following:
\[
\lim_{n \to \infty} \frac{\ell(n)}{\ell(n (\ell(n))^{-2})} \sum_{j = \lfloor j_{\sigma}(n) \rfloor + 1}^{\lfloor n - n (\ell(n))^{-2} \rfloor} \delta_{j +1} t_{j+1} = 0.
\]
This completes the proof in the case $\lfloor j_{\sigma}(n) \rfloor + 1 \leq \lfloor n - n (\ell(n))^{-2} \rfloor$.
\end{proof}

\begin{proof}[Proof of Theorem~\ref{thm:main1}~\ref{i}]
By Theorem~\ref{thm:MT2011:THM2.1} and Proposition~\ref{prop:conditions_H1_H2}, we have for each $n \in \mathbb{N}$ that there exists $\theta_{n}: (0,1) \to \mathbb{R}$ such that $\sup \{\lvert \theta_{n}(x) \rvert : x \in \overline{A}_{1} \} = \mathfrak{o}(1/w_{n})$ and
\begin{equation}\label{eq:MT2011:delta:one}
\mathds{1}_{\overline{A}_{1}} \cdot \widehat{F}_{\alpha}^{n}(\mathds{1}_{\overline{A}_{1}} \cdot v)
= \frac{1}{w_{n}} \int \mathds{1}_{\overline{A}_{1}} \cdot v\, \mathrm{d}\mu_{\alpha} \cdot \mathds{1}_{\overline{A}_{1}} + \theta_{n} \cdot v\cdot \mathds{1}_{\overline{A}_1}.
\end{equation}
Set $\tau_{j, n} \coloneqq w_{n}/w_{n-j} -1$, for $n \in \mathbb{N}$ and $j \in \{ 0, 1, 2, \dots, n \}$.  By \eqref{eq:MT2011:delta:one}, we have on $\overline{A}_{1}$ that
\begin{equation}\label{eq:MT-inequalities}
\begin{aligned}
&\left\lvert w_{n} \widehat{F}_{\alpha}^{n}(v) - \int v \, \mathrm{d}\mu_{\alpha} \right\rvert\\
&= \left\lvert w_{n} \sum_{j = 0}^{n} \mathds{1}_{A_{1}} \cdot \widehat{F}_{\alpha}^{n - j}\left(\mathds{1}_{A_{1}} \cdot \widehat{F}_{\alpha}^{j}(v \cdot \mathds{1}_{A_{j+1}})\right)  - \int v \, \mathrm{d}\mu_{\alpha} \right\rvert\\
&= \left\lvert w_{n} \sum_{j = 0}^{n} \frac{1}{w_{n - j}} \int \widehat{F}_{\alpha}^{j}(v \cdot \mathds{1}_{A_{j+1}}) \, \mathrm{d}\mu_{\alpha} - \int v \, \mathrm{d}\mu_{\alpha} + w_{n} \sum_{j = 0}^{n} \theta_{n-j} \cdot \widehat{F}_{\alpha}^{j}(v \cdot \mathds{1}_{A_{j+1}}) \right\rvert\\
&\leq \sum_{j = 0}^{n} \tau_{n, j} \int \rvert v \cdot \mathds{1}_{A_{j+1}} \lvert \, \mathrm{d}\mu_{\alpha} + w_{n} \sum_{j = 0}^{n} \lVert \theta_{n-j} \rVert_{\infty} \lVert \widehat{F}_{\alpha}^{j}(v \cdot \mathds{1}_{A_{j+1}})\rVert_{\infty} + \sum_{j = n+1}^{\infty} \int \lvert v \cdot \mathds{1}_{A_{j+1}} \rvert \, \mathrm{d}\mu_{\alpha}.
\end{aligned}
\end{equation}
Since $v \in \mathcal{A}_{\alpha} \subseteq \mathcal{L}^{1}_{\mu_{\alpha}}([0,1])$, it follows that in the final line of \eqref{eq:MT-inequalities} the third term converges to zero.  To see that the first and the second term in the final line of \eqref{eq:MT-inequalities} converge to to zero, observe that
\begin{enumerate}[label={(\roman*)},leftmargin=*]
\item since $v \in \mathcal{A}_{\alpha}$, we have that $v \in \mathcal{L}_{\mu_{\alpha}}^{1}([0, 1])$ and, moreover,
\[
\int \lvert v \cdot \mathds{1}_{A_{j}} \rvert \, \mathrm{d}\mu_{\alpha} = \frac{t_{j}}{a_{j}} \int v \cdot \mathds{1}_{A_{j}} \, \mathrm{d}\lambda;
\]
\item since $v \in \mathcal{A}_{\alpha}$ we have that $\lVert v \rVert_{\infty}$ is finite, and so the sequence 
\[
\left( \frac{1}{a_{j}} \int v \cdot \mathds{1}_{A_{j}} \, \mathrm{d}\lambda \right)_{j \in \mathbb{N}}
\]
is a bounded sequence;
\item using Lemma~\ref{lem:partIIclaim3} together with the fact that $\widehat{F}_{\alpha}$ is positive and linear and the fact that if $v \in \mathcal{A}_{\alpha}$, then $\lVert v \rVert_{\infty}$ is finite, we have that $\lvert \widehat{F}_{\alpha}^{j-1}(v \cdot \mathds{1}_{A_{j}}) (x) \rvert \leq \lVert v \rVert_{\infty} t_{j}$;
\item given $\epsilon > 0$, there exists $N_{\epsilon} \in \mathbb{N}$ such that $\lVert \theta_{m} \rVert_{\infty} \leq \epsilon/\ell(m)$, for all $m \geq N_{\epsilon}$.
\end{enumerate}
Combining these observations with Lemma~\ref{lem:jsigman} and \eqref{eq:main1}, we have that the first and the second term in the final line of \eqref{eq:MT-inequalities} converge to to zero.  Since the arguments given above are independent of a given point in $\overline{A}_{1}$, an application of Theorem~\ref{thm:main2} now finishes the proof.
\end{proof}

\begin{remark}
In the proof of Theorem~\ref{thm:main1}~\ref{i} we have not used the specific structure of $\mathcal{B}_{\alpha}$. We only used that $\mathcal{B}_{\alpha}$ is a Banach space which satisfies conditions (H1) and (H2).  Thus, we may replace $\mathcal{B}_{\alpha}$ by an arbitrary Banach space which satisfies conditions (H1) and (H2).  For such alternative Banach spaces see Remark~\ref{rmk:rmk1}.  In doing such a substitution one may change the uniform convergence to almost everywhere uniform convergence.
\end{remark}

To conclude, we give examples of $1$-expansive $\alpha$-Farey systems and of observables which belong to the set $\mathcal{A}_{\alpha}$ and which satisfy the summability condition given in \eqref{eq:main1}.

\begin{example}\label{ex:ex1}
Let $([0, 1], \mathscr{B}, \mu_{\alpha}, F_{\alpha})$ denote a $1$-expansive $\alpha$-Farey system with moderately increasing wandering rate.  Set $u\coloneqq {f}/{h_{\alpha}}$, where $f\in\mathscr{D}_{\mu_{\alpha}}$, for
\[
\mathscr{D}_{\mu_{\alpha}} \coloneqq \{ f \colon f \in \mathcal{L}^{1}_{\mu_{\alpha}}([0,1]) \; \text{and} \; f \in C^{2}((0,1)) \; \text{with} \; f' > 0 \; \text{and} \; f''\leq 0\}.
\]
We claim that $u \in \mathcal{A}_{\alpha}$ and moreover, that $u$ satisfies the summability condition given in \eqref{eq:main1}.

We first verify that $u \in \mathcal{A}_{\alpha}$.  For this, we are required to show that $u \in \mathcal{L}^1_{\mu_{\alpha}}([0, 1])$, that $\lVert u \rVert_{\infty} < +\infty$ and that $\widehat{F}^{j-1}_{\alpha}(u \cdot \mathds{1}_{A_{j}})\in \mathcal{B}_{\alpha}$, for all $j \in \mathbb{N}$.  By definition, any function belonging to $\mathscr{D}_{\mu_{\alpha}}$ is convex and continuous on $(0, 1)$, twice differentiable and $\mu_{\alpha}$-integrable.  Thus, $u \in \mathcal{L}^{1}_{\mu_{\alpha}}([0,1])$ and  $\lVert u \rVert_{\infty} < +\infty$.  Combining this with the fact that $1/h_{\alpha}$ is $\mu_{\alpha}$ integrable, non-negative and bounded, we have that $u \in \mathcal{L}^1_{\mu_{\alpha}}([0, 1])$ and $\lVert u \rVert_{\infty} < +\infty$.  Let us now turn to the second assertion, namely that $\widehat{F}^{n-1}_{\alpha}(u \cdot \mathds{1}_{A_{n}})\in \mathcal{B}_{\alpha}$, for all $n \in \mathbb{N}$.  We immediately have that $\widehat{F}_{\alpha}^{0}( u \cdot \mathds{1}_{A_{1}}) = u \cdot \mathds{1}_{A_{1}} \in \mathcal{B}_{\alpha}$.  For $n \geq 2$, note that, if $g$ is a differentiable Lipschitz function on $\overline{A}_{1}$, then $D_{\alpha}(g) = \sup \{ \lvert g' \rvert : x\in \overline{A}_{1}\}$.  Thus, by Lemma~\ref{lem:partIIclaim3} and the chain rule, we have that, for each integer $n \geq 2$,
\begin{equation}\label{eq:ex1}
\begin{aligned}
\lVert \widehat{F}_{\alpha}^{n-1} ( u \cdot \mathds{1}_{A_{n}}) \rVert_{\mathcal{B}_{\alpha}}
&=\lVert c_{1,0_{n-1}} ((f/h_{\alpha}) \circ F_{\alpha,0_{n-1}} )\rVert_{\mathcal{B}_{\alpha}}\\
&= \left\lVert a_{n} f\circ F_{\alpha,0_{n-1}}\right\rVert_{\mathcal{B}_{\alpha}}\\
&=\left\lVert a_{n} f\circ F_{\alpha,0_{n-1}}\right\rVert_{\infty}+D_{\alpha}(a_{n} f\circ F_{\alpha,0_{n-1}})\\
&\leq a_{n}(\lVert f\rVert_{\infty}+\lVert f' \cdot \mathds{1}_{A_{n}}\rVert_{\infty}).
\end{aligned}
\end{equation}
Since $f\in \mathscr{D}_{\mu_{\alpha}}$, we have that $\lVert f \rVert_{\infty} < +\infty$ and that $0\leq f'(x)\leq (f(t_{n+1})-f(t_{n+2}))/(a_{n+1})$, for all $x\in A_{n}$. Therefore, since $a_{n} = l(n) n^{-2}$, it follows that there exists $c > 0$, such that
\[
\sum_{n \in \mathbb{N}, \, n \neq 1} a_{n} \lVert f' \cdot \mathds{1}_{A_{n}} \rVert_{\infty}
\leq \sum_{n \in \mathbb{N}, \, n \neq 1} a_{n} f'(\xi_{n+1})
= \sum_{n \in \mathbb{N}, \, n \neq 1} \frac{a_{n}}{a_{n+1}} (f(t_{n+1}) - f(t_{n+2}))
\leq c f(t_{3}).
\]
Combining this with \eqref{eq:ex1} and using the facts that the sequence $( a_{n} )_{n\in\mathbb{N}}$ is summable and that $\lVert f\rVert_{\infty}$ and $\lVert u \cdot \mathds{1}_{A_{1}} \rVert_{\mathcal{B}_{\alpha}}$ are finite, the summability condition in \eqref{eq:main1} follows. Hence it follows that $u \in \mathcal{A}_{\alpha}$.
\end{example}

\subsection{Asymptotics of the $\alpha$-Farey transfer operator for $\delta \in (1/2, 1]$}\label{MRE_alpha_delta_in_051}

\begin{proof}[Proof of Theorem~\ref{thm:main1}~\ref{ii}]
Recall that $([0, 1], \mathscr{B}, \mu_{\alpha}, F_{\alpha})$ is a $\delta$-expansive $\alpha$-Farey system and that $\Gamma_{\delta} = (\Gamma(1+\delta) \Gamma(2-\delta))^{-1}$.  From \eqref{eq:RND} and \eqref{eq:Thatonpetals1}, we have that, for all $n \in \mathbb{N}$,
\[
\begin{aligned}
\lVert \mathds{1}_{A_{1}} \cdot\widehat{F}^{n-1}_{\alpha}(v \cdot \mathds{1}_{A_{n}} /h_{\alpha}) \lVert_{\mathcal{B}_{\alpha}}
&= \lVert \mathds{1}_{A_{1}} \cdot\widehat{F}^{n-1}_{\alpha}(v \cdot \mathds{1}_{A_{n}} /h_{\alpha}) \lVert_{\infty} + D_{\alpha}(\mathds{1}_{A_{1}} \cdot\widehat{F}^{n-1}_{\alpha}(v \cdot \mathds{1}_{A_{n}}/h_{\alpha}) )\\
&= a_{n} \lVert v \cdot \mathds{1}_{A_{n}} \rVert_{\infty} + a_{n} D_{\alpha}(v \cdot \mathds{1}_{A_{n}} \circ F_{\alpha, 0_{n-1}}).
\end{aligned}
\]
Combining this with the assumptions of the theorem, there exists a constant $c' > 0$ such that $\lVert \mathds{1}_{A_{1}} \cdot\widehat{F}^{n-1}_{\alpha}(v \cdot \mathds{1}_{A_{n}} /h_{\alpha}) \lVert_{\mathcal{B}_{\alpha}} < c'$, for all $n \in \mathbb{N}$.

As in the proof of Theorem~\ref{thm:main1}~\ref{i} we have, by Theorem~\ref{thm:MT2011:THM2.1} and Proposition~\ref{prop:conditions_H1_H2}, that there exists $\theta_{n}: [0, 1] \to \mathbb{R}$ such that $\sup \{ \lvert \theta_{n}(x) \rvert : x \in \overline{A}_{1} \} = \mathfrak{o}(1/w_{n})$ and, for each $n \in \mathbb{N}_{0}$,
\[
\mathds{1}_{\overline{A}_{1}} \cdot \widehat{F}_{\alpha}^{n}(\mathds{1}_{\overline{A}_{1}} \cdot v / h_{\alpha})
= \frac{\Gamma_{\delta}}{w_{n}} \int \mathds{1}_{\overline{A}_{1}} \cdot v \, \mathrm{d}\lambda \, \mathds{1}_{\overline{A}_{1}} + \theta_{n} \cdot v / h_{\alpha} \cdot \mathds{1}_{\overline{A}_{1}}.
\]
Set $\tau_{j, n} \coloneqq w_{n} / w_{n-j} - 1$, for each $n \in \mathbb{N}$ and $j \in \{ 0, 1, 2, \dots, n\}$.  By a calculation similar as in \eqref{eq:MT-inequalities}, we have on $\overline{A}_{1}$ that
\begin{equation}\label{eq:MTeq-convergence_delta}
\begin{aligned}
\left\lvert w_{n} \widehat{F}_{\alpha}^{n}(v/h_{\alpha}) - \Gamma_{\delta} \int v \, \mathrm{d}\lambda \right\rvert
\leq& w_{n} \sum_{j = 0}^{n} \lVert \theta_{n-j} \rVert_{\infty}  \lVert \widehat{F}_{\alpha}^{j}(v \cdot \mathds{1}_{A_{j+1}}/h_{\alpha})\rVert_{\infty} \\
& + \Gamma_{\delta} \sum_{j = 0}^{n} \tau_{n, j} \int \rvert v \cdot \mathds{1}_{A_{j+1}} \lvert \mathrm{d}\lambda  + \Gamma_{\delta} \sum_{j = n + 1}^{\infty} \int \lvert v \cdot \mathds{1}_{A_{j+1}} \rvert \mathrm{d}\lambda.
\end{aligned}
\end{equation}
As $v \in \mathcal{L}^{1}_{\lambda}([0,1])$, the third summand on the RHS of \eqref{eq:MTeq-convergence_delta} tends to zero.  To see that the second term on the RHS of \eqref{eq:MTeq-convergence_delta} converges to zero, recall that the sequence $(t_{n})_{n \in \mathbb{N}}$ is positive, monotonically decreasing and bounded above by one and that by assumption $a_{n} = \delta l(n) n^{-(1+\delta)}$.  Further, by \eqref{eq:Karamata}, given $\xi > 0$, there exist constants $c'' \in \mathbb{R}$ and $N = N(\xi) \in \mathbb{N}$ such that, for all $n, m \in \mathbb{N}$ with $n \geq N$,
\[
w_{m}
\geq \overline{\Gamma}_{\delta} (\e^{-\xi}l(m) m^{1-\delta} + c'')
\quad \text{and} \quad
\overline{\Gamma}_{\delta} \e^{\xi} l(n) n^{1-\delta}
\geq w_{n}
\geq \overline{\Gamma}_{\delta} \e^{-\xi} l(n) n^{1-\delta}.
\]
This implies that given $\epsilon \in (0, 1)$, we have for all $n \in \mathbb{N}$ such that $\lceil \epsilon n \rceil > N$,
\[
\begin{aligned}
&\sum_{j = n - \lceil \epsilon n \rceil + 1}^{n} \frac{w_{n}}{w_{n-j}} \frac{l(j)}{j^{1+\delta-\eta}}\\
&\leq \frac{\overline{\Gamma}_{\delta} \e^{\xi} (l(n))^{2}}{w_{0} n^{2\delta - \eta}} + \hspace{-1mm}
\sum_{j = n - \lceil \epsilon n \rceil + 1}^{n - N} \frac{\e^{2\xi} n^{1-\delta} l(n)}{(n-j)^{1-\delta}l(n-j)}\frac{l(j)}{j^{1+\delta-\eta}}
+ \hspace{-1mm} \sum_{j = n - N + 1}^{n-1}\frac{\e^{2\xi} l(n) n^{1-\delta}}{(n-j)^{1-\delta}l(n-j) + c'' \e^{\xi}} \frac{l(j)}{j^{1+\delta-\eta}}.\\
\end{aligned}
\]
(Recall that $\eta$  is the value given in condition (b) of Theorem~\ref{thm:main1}~\ref{ii}).  A simple calculation shows that the RHS of the latter inequality converges to zero as $n$ tends to infinity.  Further, since all of the summand are positive, we have that the limit as $n$ tends to infinity exists and equals zero.  This together with Lemma~\ref{lem:powerslowly}~\ref{SV(i)} implies that, for each $\epsilon, \xi \in (0, 1)$,
\begin{equation}
\begin{aligned}
\limsup_{n \to +\infty} \sum_{j = 0}^{n} \frac{w_{n}}{w_{n-j}} \int \lvert v \cdot \mathds{1}_{A_{j+1}} \rvert \mathrm{d}\lambda
&\leq \limsup_{n \to +\infty} \sum_{j = 0}^{n -  \lceil \epsilon n \rceil} \frac{\e^{2\xi} l(n)  n^{1-\delta}}{l(n-j)  (n-j)^{1-\delta}} \int \lvert v \cdot \mathds{1}_{A_{j+1}} \rvert \mathrm{d}\lambda\\
&= \limsup_{n \to +\infty} \sum_{j = 0}^{n -  \lceil \epsilon n \rceil} \frac{\e^{2\xi} n^{1-\delta}}{(n-j)^{1-\delta}} \int \lvert v \cdot \mathds{1}_{A_{j+1}} \rvert \mathrm{d}\lambda\\
&\leq \epsilon^{\delta -1} \e^{2\xi} \int \lvert v \rvert \mathrm{d}\lambda.
\end{aligned}
\label{eq:s_bound}
\end{equation}
Furthermore, 
\[
\begin{aligned}
\liminf_{n \to +\infty} \sum_{j = 0}^{n} \frac{w_{n}}{w_{n-j}} \int \lvert v \cdot \mathds{1}_{A_{j+1}} \rvert \mathrm{d}\lambda
&\geq \liminf_{n \to +\infty} \sum_{j = 0}^{n -  \lceil \epsilon n \rceil}\frac{\e^{-2\xi} l(n)  n^{1-\delta}}{l(n-j)  (n-j)^{1-\delta}} \int \lvert v \cdot \mathds{1}_{A_{j+1}} \rvert \mathrm{d}\lambda\\
&\geq \epsilon^{\delta -1} \e^{-2\xi}  \int \lvert v \rvert \mathrm{d}\lambda.
\end{aligned}
\]
Since $\epsilon, \xi \in (0,1)$ are arbitrary, it follows that
\[
\lim_{n \to +\infty} \sum_{j = 1}^{n} \frac{w_{n}}{w_{n-j}} \int \lvert v \cdot \mathds{1}_{A_{j}} \rvert \mathrm{d}\lambda =  \int \lvert v \rvert \mathrm{d}\lambda,
\]
and hence, the second term on the RHS of \eqref{eq:MTeq-convergence_delta} converges to zero as $n$ tends to infinity. We now show that the first term on the RHS of \eqref{eq:MTeq-convergence_delta} converges to zero.  By \eqref{eq:Karamata} and the fact that $\sup \{ \lvert \theta_{n}(x) \rvert : x \in \overline{A}_{1} \} = \mathfrak{o}(1/w_{n})$, given $\xi > 0$ there exists $M = M(\xi) \in \mathbb{N}$ such that, for all $m \geq M$,
\[
\overline{\Gamma}_{\delta} \e^{-\xi} l(m) m^{1-\delta}
\leq w_{m}
\leq \overline{\Gamma}_{\delta} \e^{\xi} l(m) m^{1-\delta}
\quad \text{and} \quad
\lVert \theta_{m} \rVert_{\infty} \leq \xi/ w_{m}.
\]
Moreover, there exists a constants $c_{1}, c_{2} \in \mathbb{R}$ such that, for all $n \in \mathbb{N}_{0}$,
\[
w_{n} \geq \overline{\Gamma}_{\delta}(\e^{-\xi} l(n) n^{1-\delta} + c_{1}) \quad \text{and} \quad \rVert \theta_{n} \lVert_{\infty} \leq c_{2}/w_{n}.
\]
Furthermore, since $F_{\alpha}$ is $\delta$-expansive, by condition (b) in Theorem~\ref{thm:main1}~\ref{ii}, we have that the sequence $( a_{n} \lVert v \cdot \mathds{1}_{A_{n}} \rVert_{\infty} )_{n \in \mathbb{N}}$ is summable.  These properties together with Lemma~\ref{lem:partIIclaim3} and an argument similar to that presented in \eqref{eq:s_bound}, imply the existence of a constant $c_{3} > 0$ such that
\[
\begin{aligned}
0 &\leq\limsup_{n \to +\infty} w_{n} \sum_{j = 0}^{n} \lVert \theta_{n-j} \rVert_{\infty} \lVert \widehat{F}_{\alpha}^{j}(v \cdot \mathds{1}_{A_{j+1}}/h_{\alpha})\rVert_{\infty}\\
&\leq \limsup_{n \to +\infty} \xi \e^{2\xi} \sum_{j = 0}^{n - M}  \frac{l(n)  n^{1-\delta}}{l(n - j)  (n - j)^{1-\delta}}\lVert v \cdot \mathds{1}_{A_{j+1}} \rVert_{\infty}  a_{j + 1}
+ \limsup_{n \to +\infty}\frac{c_{2} w_{n} a_{n+1}}{w_{0}}\\
&\phantom{=} \;+ \limsup_{n \to +\infty} c_{3} \e^{2\xi} \sum_{j = n - M+1}^{n - 1}  \frac{l(n)  n^{1-\delta}}{(n-j)^{1-\delta}l(n - j) + c_{1}\e^{\xi}} a_{j + 1} \lVert v \cdot \mathds{1}_{A_{j+1}} \rVert_{\infty}\\
&= \xi \e^{2\xi} \sum_{j = 0}^{\infty} a_{j+1} \lVert v \cdot \mathds{1}_{A_{j+1}} \rVert_{\infty}.
\end{aligned}
\]
Since $\xi > 0$ was chosen arbitrarily, the result follows and, since the arguments given above are independent of a given point in $\overline{A}_{ 1}$, an application of Theorem~\ref{thm:main2} finishes the proof.
\end{proof}

\subsection{Proof of Theorem~\ref{thm:main1}~\ref{iii} - Counterexamples for $\delta \in (1/2,1)$}\label{section:Counterexamples}

In this section we provide a constructive proof of Theorem~\ref{thm:main1}~\ref{iii}.  The proof is divided into several parts.  First, we define a class of observables $\mathcal{V}$.   Second, in Proposition~\ref{prop:L1andLinfty} we will show that if $v \in \mathcal{V}$, then $v$ is bounded, of bounded variation, Riemann integrable and belongs to $\mathcal{L}_{\mu_{\alpha}}^{1}([0, 1])$.  Third, in Proposition~\ref{prop:Balpha} we will show that if $v \in \mathcal{V}$, then it belongs to the space $\mathcal{A}_{\alpha}$, and in Proposition~\ref{prop:summand} we will show that the summability condition given in \eqref{eq:main1} is satisfied for all $v \in \mathcal{V}$.  Finally, in Proposition~\ref{prop:liminfandlimsup} we will show that, if $v \in \mathcal{V}$, then 
\[
\liminf_{n \to +\infty} w_{n}  \widehat{F}_{\alpha}^{n}(v)(x) =  \Gamma_{\delta} \int v \, \mathrm{d} \mu_{\alpha}
\quad \text{and} \quad
\limsup_{n \to +\infty} w_{n}  \widehat{F}_{\alpha}^{n}(v)(x) = +\infty.
\]
Combing these results will then yield a proof of Theorem~\ref{thm:main1}~\ref{iii}

Let us now begin by defining the set $\mathcal{V}$.  We let $\mathcal{V}$ denote the class of observable $v: [0, 1] \to \mathbb{R}$ which are of the following form:
\begin{equation}\label{eq:observable}
v \coloneqq \sum_{k = 1}^{\infty} s_{k} \sum_{j = N_{k} - n_{k}}^{N_{k}} \mathds{1}_{A_{j}},
\end{equation}
where 
\[
N_{k} \coloneqq \lceil 2^{g_{1} k} \rceil, \quad n_{k} \coloneqq \lfloor 2^{g_{2} k} \rfloor \quad \text{and} \quad s_{k} \coloneqq 2^{-g_{3}k},
\]
and where $g_{1}$, $g_{2}$ and $g_{3}$ denote three positive constants, depending on $\delta$, such that
\begin{description}[leftmargin=*]
\item[(C1)] $g_{1} > (1 - \delta)^{-1}$,
\item[(C2)] $\delta g_{1} > g_{2}$,
\item[(C3)] there exists $\epsilon \in (0, \delta-1/2)$, such that  $g_{2}(\delta - \epsilon) >  (2\delta + 2 \epsilon - 1) g_{1} + g_{3}$.
\end{description}

\begin{example}\label{ex:exc1c2c31}
For  $\delta \in (1/2, 1)$, choose $g_1= (1+\epsilon)/(1-\delta)$ and $g_2=\delta/(1-\delta)$. Then it is clear that $g_1$ and $g_2$ satisfy the conditions (C1) and (C2). With these choices one immediately verifies that (C3) is equivalent to $g_3 < (1- \delta) - \rho \epsilon$, for $\rho \coloneqq (3 \delta +1 + 2 \epsilon)/(1-\delta)$. Hence, by choosing $\epsilon >0$ sufficiently small, it follows that the conditions (C1), (C2) and (C3) can be fulfilled simultaneously.
\end{example}

\begin{example}\label{ex:exc1c2c32}
For $\delta \in (1/2, 1)$, choose $g_{1} = (1 - \delta)^{-2}$, $g_{2} = (\delta^{2}+2\delta-1)(2\delta)^{-1}(1-\delta)^{-2}$, $g_{3} = 8^{-1}$ and $\epsilon = \delta(1-\delta)^{2}(2\delta^{2} + 12\delta -2)^{-1}$.  For each $\delta \in (1/2, 1)$, these values are all positive and satisfy conditions (C1), (C2) and (C3).
\end{example}

The main reason why we require the sequence $(s_{k})_{k\in \mathbb{N}}$ is to ensure that $v$ is of bounded variation.  Further, condition (C3) is only required in the proof of the second statement of Proposition~\ref{prop:liminfandlimsup}, specifically when Lemma~\ref{lemma:keyLemma} is used.

Before we begin with Proposition~\ref{prop:L1andLinfty}, we give two elementary lemmas which we will use in its proof.

\begin{lemma}\label{lemmaMVT}
If $s \in (0, 1)$ and $1 < b < a^{s}$, then
\[
\sum_{k=1}^{\infty}a^{(1-s)k}-(a^{k}-b^{k})^{1-s} \leq \sum_{k=1}^{\infty} (b/a^{s})^{k}< +\infty.
\]
\end{lemma}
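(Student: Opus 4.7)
The plan is to establish the inequality termwise: I would show that for each $k \geq 1$,
\[
a^{(1-s)k} - (a^k - b^k)^{1-s} \leq (b/a^s)^k.
\]
Summing over $k$ then immediately yields the stated upper bound, and the convergence of the right-hand side follows from $b < a^s$ (which forces $b/a^s < 1$, so the series is geometric).

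For the termwise inequality, the natural first move is to factor out $a^{(1-s)k}$ and write
\[
a^{(1-s)k} - (a^k - b^k)^{1-s} = a^{(1-s)k}\bigl[1 - (1 - (b/a)^k)^{1-s}\bigr].
\]
Setting $r := (b/a)^k$, the hypothesis $1 < b < a^s$ together with $s \in (0,1)$ forces $a > 1$ and $a^s < a$, so that $r \in (0,1)$ for every $k \geq 1$. Since $1-r \in (0,1)$ and the exponent $1-s$ also lies in $(0,1)$, one has the elementary inequality $(1-r)^{1-s} \geq (1-r)^{1} = 1-r$ (raising a number in $(0,1)$ to a smaller positive power increases it). Hence $1 - (1-(b/a)^k)^{1-s} \leq (b/a)^k$, and multiplying through by $a^{(1-s)k}$ produces the required termwise bound.

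No serious obstacle is anticipated. The entire argument rests on the elementary exponent inequality above and on the fact that the dominating series is geometric with ratio strictly less than one, both of which follow immediately from the standing hypotheses $s \in (0,1)$ and $1 < b < a^s$.
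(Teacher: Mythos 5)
Your proposal is correct and follows essentially the same route as the paper: both arguments reduce to the elementary inequality $(1-r)^{1-s}\geq 1-r$ for $r\in(0,1)$ and $1-s\in(0,1)$, applied after factoring $a^{(1-s)k}$ out of each term, and both conclude by comparison with the geometric series $\sum_k (b/a^s)^k$. The only difference is cosmetic — you state the bound termwise directly, while the paper writes each term as $(b/a^s)^k$ times a factor shown to be at most $1$.
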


\begin{proof}
By assumption, we have that $a/b > 1$ and so $1 - (b/a)^{k} \leq (1 - (b/a)^{k})^{1-s}$, which implies that $(a/b)^{k} - (a/b)^{k}(1-(b/a)^{k})^{1-s} \leq 1$, for each $k \in \mathbb{N}$.  Therefore,
\[
\sum_{k = 1}^{\infty} a^{(1-s)k} - (a^{k} - b^{k})^{1-s} 
= \sum_{k = 1}^{\infty} (b/a^{s})^{k} ( (a/b)^{k} - (a/b)^{k}(1 - (b/a)^{k})^{1-s} )
\leq \sum_{k=1}^{\infty} (b/a^{s})^{k}
< +\infty.%
\]
\end{proof}

\begin{lemma}\label{lem:NKnk-1}
For each $k \in \mathbb{N}$, we have that $N_{k+1} - n_{k+1} > N_{k}$.
\end{lemma}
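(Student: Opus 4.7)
I would prove this inequality by a direct estimation using only the elementary bounds $\lceil x \rceil \leq x + 1$, $\lceil x \rceil \geq x$, and $\lfloor x \rfloor \leq x$. Applying these to the definitions of $N_{k+1}$, $n_{k+1}$ and $N_k$, the desired statement $N_{k+1} - n_{k+1} > N_k$ reduces to showing
\begin{equation*}
2^{g_1(k+1)} - 2^{g_2(k+1)} - 2^{g_1 k} - 1 > 0,
\end{equation*}
or equivalently, after factoring out $2^{g_2 k}$,
\begin{equation*}
2^{g_2 k}\bigl[2^{(g_1 - g_2)k}(2^{g_1} - 1) - 2^{g_2}\bigr] > 1,
\end{equation*}
for every $k \in \mathbb{N}$.

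The essential input is condition (C1) combined with (C2). Condition (C2) gives $g_1 - g_2 > (1 - \delta) g_1$, and combining this with (C1) amplifies the inequality to $g_1 - g_2 > (1 - \delta) g_1 > 1$, so that $2^{g_1 - g_2} > 2$. Similarly, since $\delta > 1/2$, (C1) forces $g_1 > (1 - \delta)^{-1} > 2$, whence $2^{g_1} > 4$ and in particular $2^{g_2} < 2^{g_1}$. Feeding these two facts back into the bracketed quantity, and using $2^{(g_1 - g_2)k} \geq 2^{g_1 - g_2} > 2$ for $k \geq 1$, the bracket is bounded below by
\begin{equation*}
2 \cdot (2^{g_1} - 1) - 2^{g_1} = 2^{g_1} - 2 > 2.
\end{equation*}
Since $2^{g_2 k} \geq 1$ for $k \geq 1$, the full product exceeds $1$, which establishes the inequality.

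I do not anticipate any genuine obstacle here: the statement is a quantitative separation between three geometric sequences with exponents $g_1$, $g_2$ and $0$, and the conditions (C1) and (C2) supply a growth-rate gap with a comfortable margin strictly greater than $1$. The more delicate condition (C3), which will be needed later in the proof of Proposition~\ref{prop:liminfandlimsup}, plays no role in this lemma.
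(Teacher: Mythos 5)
Your proof is correct and takes essentially the same approach as the paper: both reduce the claim, via the elementary ceiling/floor bounds, to an inequality among the exponentials $2^{g_1(k+1)}$, $2^{g_2(k+1)}$ and $2^{g_1 k}$, and both extract the required gap from (C1) and (C2) through $g_1 - g_2 > (1-\delta)g_1 > 1$. The only cosmetic difference is that the paper bounds the ratio $(N_{k+1}-n_{k+1})/N_k$ from below by a constant exceeding one, whereas you bound the difference directly.
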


\begin{proof}
 By (C1) and (C2), we have that $g_{1} > (1-\delta)^{-1}$ and $g_{2} - g_{1} < 0$.  This implies that, for all $k \in \mathbb{N}$,
\[
\begin{aligned}
\frac{N_{k+1}-n_{k+1}}{N_{k}}
&\geq \frac{2^{g_{1}(k+1)} - 2^{g_{2} (k+1)} - 1}{2^{g_{1} k}+1}\\
&=\frac{2^{g_{1}}(1-2^{(g_{2} - g_{1})(k+1)} - 2^{-g_{1}(k+1)})}{2^{-g_{1} k} + 1}\\
&\geq \frac{2^{g_{1}}(1-2^{2(g_{2} - g_{1})} - 2^{-2g_{1}})}{2^{-g_{1}} + 1}.
\end{aligned}
\]
Using (C1) and (C2) once more immediately verifies that the latter term is strictly greater than one.
\end{proof}

\begin{proposition}\label{prop:L1andLinfty}
An observable $v$ defined as in \eqref{eq:observable} is bounded, of bounded variation, Riemann integrable and belongs to the space $\mathcal{L}_{\mu_{\alpha}}^{1}([0, 1])$.
\end{proposition}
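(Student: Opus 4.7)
The plan is to exploit the disjoint-support structure of the defining series. By Lemma~\ref{lem:NKnk-1} the index ranges $[N_{k} - n_{k}, N_{k}]$ are pairwise disjoint, and since $\{A_{j}\}_{j \in \mathbb{N}}$ partitions $(0,1)$, each point of $(0,1)$ lies in the support of at most one term of \eqref{eq:observable}. Consequently $\lVert v \rVert_{\infty} \leq \sup_{k} s_{k} = s_{1} = 2^{-g_{3}}$, which gives boundedness. Since the atoms $A_{j}$ are ordered from right to left and the indices inside each block are consecutive, the $k$-th block $B_{k} \coloneqq \bigcup_{j = N_{k}-n_{k}}^{N_{k}} A_{j}$ is itself an interval on which $v$ is the constant $s_{k}$, while $v$ vanishes outside $\bigcup_{k} B_{k}$. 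Thus the total variation of $v$ is bounded by $2 \sum_{k = 1}^{\infty} s_{k} = 2\sum_{k=1}^\infty 2^{-g_{3} k} < + \infty$. Riemann integrability then follows from Lebesgue's criterion: $v$ is bounded, and its set of discontinuities is contained in the countable set $\{t_{j} : j \in \mathbb{N}\} \cup \{0\}$, which has Lebesgue measure zero.

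For membership in $\mathcal{L}_{\mu_{\alpha}}^{1}([0,1])$, I would integrate directly. Using $\mu_{\alpha}(A_{j}) = t_{j}$, which follows from \eqref{eq:RND}, the disjointness of the supports yields
\[
\int \lvert v \rvert \, \mathrm{d}\mu_{\alpha} = \sum_{k = 1}^{\infty} s_{k} \sum_{j = N_{k} - n_{k}}^{N_{k}} t_{j}.
\]
Monotonicity of $(t_{n})_{n \in \mathbb{N}}$ gives the estimate $\sum_{j = N_{k} - n_{k}}^{N_{k}} t_{j} \leq (n_{k} + 1)\, t_{N_{k} - n_{k}}$. Lemma~\ref{lem:NKnk-1}, together with the asymptotic $N_{k} \sim 2^{g_{1} k}$ and $n_{k} \sim 2^{g_{2} k}$ with $g_{2} < g_{1}$ (from (C2)), yields $N_{k} - n_{k} \sim 2^{g_{1} k}$, so by the $\delta$-expansivity $t_{n} \sim l(n) n^{-\delta}$ the $k$-th term is controlled by a constant multiple of $2^{(g_{2} - \delta g_{1} - g_{3})k} \, l(2^{g_{1} k})$. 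Condition (C2) forces $g_{2} - \delta g_{1} < 0$, and combined with $g_{3} > 0$ and Lemma~\ref{lem:powerslowly}~\ref{SV(ii)} (which absorbs the slowly varying factor into any exponential decay), the series is geometrically summable.

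The main obstacle I anticipate is not any individual estimate but rather verifying cleanly that the blocks $B_{k}$ are pairwise disjoint intervals, reducing boundedness and bounded variation to the elementary bound $\sum 2^{-g_{3} k}$; this rests entirely on Lemma~\ref{lem:NKnk-1}. The $\mathcal{L}^{1}_{\mu_{\alpha}}$ estimate, while geometric in nature, requires correctly pairing (C2) with slow variation to dominate the weight $n_{k} t_{N_{k} - n_{k}}$. Notably condition (C3) is not invoked here; it will be needed only for the sharper oscillation statement of Proposition~\ref{prop:liminfandlimsup}.
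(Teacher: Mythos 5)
Your proof is correct, and for the boundedness, bounded-variation and Riemann-integrability claims it coincides with the paper's argument (indeed you justify Riemann integrability via Lebesgue's criterion, whereas the paper simply asserts it). The one place where you genuinely depart from the paper is the $\mathcal{L}^{1}_{\mu_{\alpha}}$ estimate. The paper bounds the inner sum $\sum_{j = N_{k} - n_{k}}^{N_{k}} t_{j}$ by first dominating $t_{j}$ by $c\, j^{\eta - \delta}$ for a suitable $\eta < \min\{\delta, g_{3}/g_{1}\}$, comparing the resulting sum with $(1-\delta+\eta)^{-1}\bigl(N_{k}^{1-\delta+\eta} - (N_{k}-n_{k})^{1-\delta+\eta}\bigr)$, and then invoking the purpose-built Lemma~\ref{lemmaMVT} (with $a = 2^{g_{1}}$, $b = 2^{g_{2}}$, $s = \delta - \eta$) to extract the geometric factor $2^{(g_{2} - (\delta - \eta)g_{1})k}$. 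You instead use the cruder but entirely elementary bound $\sum_{j = N_{k} - n_{k}}^{N_{k}} t_{j} \leq (n_{k}+1)\, t_{N_{k}-n_{k}}$ from monotonicity of $(t_{n})_{n \in \mathbb{N}}$, together with $N_{k} - n_{k} \sim N_{k}$ and Lemma~\ref{lem:powerslowly}~\ref{SV(i)}--\ref{SV(ii)} to absorb the slowly varying factor; this yields the same decisive exponent $g_{2} - \delta g_{1} - g_{3} < 0$ and dispenses with Lemma~\ref{lemmaMVT} altogether for this proposition. The trade-off is negligible here since both bounds are of the same order $n_{k} N_{k}^{-\delta} l(N_{k})$; your route is shorter and self-contained, while the paper's Lemma~\ref{lemmaMVT} is in any case stated in a form reusable elsewhere. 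Your closing observation that (C3) plays no role in this proposition also matches the paper, which notes that (C3) is only needed for Proposition~\ref{prop:liminfandlimsup}.
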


\begin{proof}
Clearly the observable $v$ is Riemann integrable.  Moreover, $v$ is measurable, as each of the atoms of $\alpha$ is measurable and $v$ is the sum of indicator functions of atoms of $\alpha$.  Further, the range of $v$ is equal to $\{ 0 \} \cup \{ s_{k} : k \in \mathbb{N} \}$, and thus, $\lVert v \rVert_{\infty} = s_{1}$.  By Lemma~\ref{lem:NKnk-1}, we have that $N_{k+1} - n_{k+1} > N_{k}$, and so the variation of $v$ is equal to $2 \sum_{k = 1}^{\infty} s_{k} = 1$, which is finite, as $s_{k} \coloneqq 2^{-g_{3} k}$ and as $g_{3}$ is positive. This shows that $v$ is of bounded variation.  It remains to show that $v$ is $\mu_{\alpha}$-integrable.  For this recall that $\mu_{\alpha}(A_{k})=t_{k}$, for each $k \in \mathbb{N}$.  Choose a positive constant $\eta < \min \{ \delta, g_{3}/g_{1} \}$ and recall that $t_{n} \sim l(n) n^{-\delta}$.  By Lemma~\ref{lem:powerslowly}~\ref{SV(ii)}, there exists a constant $c > 0$ such that $t_{n} \leq c l(n)n^{-\delta} \leq c n^{\eta- \delta}$, for each $n \in \mathbb{N}$.  Therefore, by Lemma~\ref{lemmaMVT} and Lemma~\ref{lem:NKnk-1}, we have that
\[
\begin{aligned}
\int \lvert v \rvert \, \mathrm{d}\mu_{\alpha}
&= \sum_{k = 1}^{\infty} s_{k} \sum_{j = N_{k} - n_{k}}^{N_{k}} t_{j}\\
&\leq \sum_{k = 1}^{\infty} 2^{-g_{3} k} \sum_{j = N_{k} - n_{k}}^{N_{k}} \frac{c}{j^{\delta - \eta}}\\
& \leq (1 - \delta + \eta)^{-1} c \sum_{k = 1}^{\infty} \left( 2^{-g_{3} k}\left(2^{g_{1}k(1 - \delta + \eta)} - \left(2^{g_{1}k} - 2^{g_{2}k}\right)^{1 - \delta + \eta}\right) + 2^{-g_{3}k} {N_{k}}^{\eta - \delta} \right)\\
& \leq (1 - \delta + \eta)^{-1} c \sum_{k = 1}^{\infty} \left( 2^{(g_{2} - \delta g_{1} + \eta g_{1} - g_{3})k} + 2^{-g_{3}k} {N_{k}}^{\eta - \delta} \right).
\end{aligned}
\]
The latter series converges, since $\eta < \min \{ \delta, g_{3}/g_{1} \}$, $g_{2} < \delta g_{1}$ and $N_{k} > 1$, for all $k \in \mathbb{N}$.
\end{proof}

Our next aim is to show that $v$ belongs to $\mathcal{A}_{\alpha}$ and satisfies the condition given in \eqref{eq:main1}.

\begin{proposition}\label{prop:Balpha}
An observable $v$ defined as in \eqref{eq:observable} belongs to $\mathcal{A}_{\alpha}$.
\end{proposition}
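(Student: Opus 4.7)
The plan is to leverage Proposition~\ref{prop:L1andLinfty} for the $\mathcal{L}^{1}_{\mu_{\alpha}}$-integrability and the bound $\lVert v \rVert_{\infty} < \infty$, thereby reducing the proof to the sole remaining requirement in the definition of $\mathcal{A}_{\alpha}$: namely that $\widehat{F}_{\alpha}^{n-1}(v \cdot \mathds{1}_{A_{n}}) \in \mathcal{B}_{\alpha}$ for every $n \in \mathbb{N}$.

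First I would exploit the block structure of $v$. By Lemma~\ref{lem:NKnk-1}, the index blocks $\{ N_{k}-n_{k}, \ldots, N_{k} \}$ are pairwise disjoint, so for each fixed $n \in \mathbb{N}$ the product $v \cdot \mathds{1}_{A_{n}}$ is either identically zero or equals $s_{k}\, \mathds{1}_{A_{n}}$ for a unique index $k$ with $N_{k}-n_{k} \leq n \leq N_{k}$. In the non-trivial case, linearity of $\widehat{F}_{\alpha}$ together with Lemma~\ref{lem:partIIclaim3} yields
\[
\widehat{F}_{\alpha}^{n-1}(v \cdot \mathds{1}_{A_{n}}) \;=\; s_{k}\, \widehat{F}_{\alpha}^{n-1}(\mathds{1}_{A_{n}}) \;=\; s_{k}\, t_{n}\, \mathds{1}_{A_{1}}.
\]
Any scalar multiple of $\mathds{1}_{A_{1}}$ lies in $\mathcal{B}_{\alpha}$: such a function is supported in $\overline{A}_{1}$ (with value $0$ at $x=1$, since $A_{1}$ is right-open), satisfies $\lVert s_{k} t_{n} \mathds{1}_{A_{1}} \rVert_{\infty} = s_{k} t_{n} < \infty$, and is constant on every atom of $\beta_{\alpha}$, so that $D_{\alpha}(s_{k} t_{n} \mathds{1}_{A_{1}}) = 0$. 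Hence $\lVert \widehat{F}_{\alpha}^{n-1}(v \cdot \mathds{1}_{A_{n}}) \rVert_{\mathcal{B}_{\alpha}}$ is finite, which completes the verification.

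I do not anticipate any serious obstacle here: the substantive content has already been discharged in Lemma~\ref{lem:partIIclaim3}, which delivers the explicit formula $\widehat{F}_{\alpha}^{n-1}(\mathds{1}_{A_{n}}) = t_{n} \mathds{1}_{A_{1}}$ together with the correct value at $x = 1$, and in Lemma~\ref{lem:NKnk-1}, whose disjointness statement is exactly what allows $v \cdot \mathds{1}_{A_{n}}$ to be reduced to a single scalar multiple of $\mathds{1}_{A_{n}}$. The only subtlety worth flagging in writing up the proof is to record explicitly that the value at the endpoint $x=1$ is zero, ensuring that the image function is genuinely supported on $\overline{A}_{1}$ as demanded by the definition of $\mathcal{B}_{\alpha}$.
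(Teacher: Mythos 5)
Your proposal is correct and follows essentially the same route as the paper: invoke Proposition~\ref{prop:L1andLinfty} for integrability and boundedness, then use Lemma~\ref{lem:partIIclaim3} to compute $\widehat{F}_{\alpha}^{n-1}(v\cdot\mathds{1}_{A_{n}})$ explicitly as a scalar multiple of $\mathds{1}_{A_{1}}$, which clearly lies in $\mathcal{B}_{\alpha}$. If anything your write-up is slightly more careful than the paper's, since you retain the factor $s_{k}$ (which is inadvertently dropped in the paper's displayed formula \eqref{eq:flaphaaj}) and you explicitly check the value at the endpoint $x=1$.
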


\begin{proof}
By Proposition~\ref{prop:L1andLinfty}, we have that $v \in \mathcal{L}^{1}_{\mu_{\alpha}}([0,1])$ and that $\lVert v \rVert_{\infty} = 1$.  Moreover, by Lemma~\ref{lem:partIIclaim3}, we have on $[0, 1]$, that, for each $j \in \mathbb{N}$,
\begin{equation}\label{eq:flaphaaj}
\widehat{F}_{\alpha}^{j-1}(v \cdot \mathds{1}_{A_{j}})(x) = \begin{cases}
t_{j} & \text{if} \; N_{k} - n_{k} \leq j \leq N_{k} \; \text{for some} \; k \in \mathbb{N} \; \text{and if} \; x \in A_{1},\\
0 & \text{otherwise.}
\end{cases}
\end{equation}
Therefore, $\widehat{F}^{j-1}_{\alpha}(v \cdot \mathds{1}_{A_{j}}) \in \mathcal{B}_{\alpha}$, for all $j \in \mathbb{N}$, and hence, it follows that $v \in \mathcal{A}_{\alpha}$.
\end{proof}

\begin{proposition}\label{prop:summand}
An observable $v$ defined as in \eqref{eq:observable} satisfies the summability condition given in \eqref{eq:main1}.
\end{proposition}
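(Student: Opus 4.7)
The plan is to recognise that the summability condition is essentially equivalent to the integrability already established in Proposition~\ref{prop:L1andLinfty}, and therefore that only minor bookkeeping is needed.

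First I would invoke Lemma~\ref{lem:NKnk-1} to note that the index ranges $\{N_k - n_k, \ldots, N_k\}$ for $k \in \mathbb{N}$ are pairwise disjoint. Consequently, from the definition of $v$ in \eqref{eq:observable}, for each $j \in \mathbb{N}$ there is at most one $k \in \mathbb{N}$ with $j \in \{N_k - n_k, \ldots, N_k\}$, and in that case $v \cdot \mathds{1}_{A_j} = s_k \mathds{1}_{A_j}$; for all other $j$ one has $v \cdot \mathds{1}_{A_j} = 0$. By linearity of $\widehat{F}_\alpha$ together with Lemma~\ref{lem:partIIclaim3}, this yields $\widehat{F}_\alpha^{j-1}(v \cdot \mathds{1}_{A_j}) = s_k t_j \mathds{1}_{A_1}$ when $j$ lies in the $k$-th block, so $\lVert \widehat{F}_\alpha^{j-1}(v \cdot \mathds{1}_{A_j}) \rVert_\infty = s_k t_j$; otherwise the sup-norm is zero.

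Second, I would sum over $j$ to obtain
\[
\sum_{j = 1}^{\infty} \lVert \widehat{F}_\alpha^{j-1}(v \cdot \mathds{1}_{A_j}) \rVert_\infty = \sum_{k = 1}^{\infty} s_k \sum_{j = N_k - n_k}^{N_k} t_j,
\]
and observe that, because $\mu_\alpha(A_j) = t_j$ and the blocks are disjoint, the right-hand side is precisely the series representing $\int \lvert v \rvert \, \mathrm{d}\mu_\alpha$ that was shown to be finite in the proof of Proposition~\ref{prop:L1andLinfty} (using Lemmas~\ref{lemmaMVT} and \ref{lem:NKnk-1} together with conditions (C1) and (C2)). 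Hence \eqref{eq:main1} follows at once.

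There is no serious obstacle in this argument; the quantitative estimates — in particular the comparison $t_n \leq c n^{\eta - \delta}$ and the telescoping-type bound on $\sum_{j = N_k - n_k}^{N_k} j^{\eta - \delta}$ via Lemma~\ref{lemmaMVT} — have already been carried out. The one delicate step is the disjointness provided by Lemma~\ref{lem:NKnk-1}, which is what allows $v \cdot \mathds{1}_{A_j}$ to collapse to a single scaled indicator and Lemma~\ref{lem:partIIclaim3} to be applied directly to each summand.
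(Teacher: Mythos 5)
Your proposal is correct and follows essentially the same route as the paper: reduce each term to $s_k t_j$ via Lemma~\ref{lem:partIIclaim3} (the paper does this through \eqref{eq:flaphaaj}), sum over the disjoint blocks, and identify the resulting series with $\int \lvert v \rvert \, \mathrm{d}\mu_{\alpha}$, which is finite by Proposition~\ref{prop:L1andLinfty}. Your explicit inclusion of the factor $s_k$ in $\widehat{F}_{\alpha}^{j-1}(v \cdot \mathds{1}_{A_j}) = s_k t_j \mathds{1}_{A_1}$ is in fact slightly more careful than the paper's statement of \eqref{eq:flaphaaj}, which omits that factor.
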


\begin{proof}
Lemma~\ref{lem:partIIclaim3} and \eqref{eq:flaphaaj} together imply that
\[
\sum_{k = 0}^{\infty} \lVert \widehat{F}_{\alpha}^{k}(v \cdot \mathds{1}_{A_{k+1}}) \rVert_{\infty}
= \sum_{k = 1}^{\infty} s_{k} \sum_{j = N_{k} - n_{k}}^{N_{k}} t_{j}
= \sum_{k = 1}^{\infty} s_{k} \sum_{j = N_{k} - n_{k}}^{N_{k}} \mu_{\alpha}(A_{j})
= \int \lvert v \rvert \, \mathrm{d}\mu_{\alpha}.
\]
The latter term is finite, since $v \in \mathcal{L}_{\mu_{\alpha}}^{1}([0, 1])$, by Proposition~\ref{prop:L1andLinfty}.
\end{proof}

In the proof of Proposition~\ref{prop:liminfandlimsup}, we will require the following auxiliary result.

\begin{lemma}\label{lemma:keyLemma}
For each $N \in \mathbb{N}$, the following sequence diverges to infinity:
\[
\left( s_{k} \sum_{j = N_{k} - n_{k}}^{N_{k}-N} \frac{{N_{k}^{1-\delta} l(N_{k})}}{(N_{k} - j)^{1-\delta} l(N_{k} - j)} \frac{l(j)}{j^{\delta}} \right)_{k \in \mathbb{N}}.
\]
\end{lemma}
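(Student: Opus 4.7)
The plan is to make the change of variables $m = N_k - j$, separate the resulting expression into a power-law part and a slowly varying part, apply Karamata's theorem to the inner sum, and then use condition (C3) to show that the remaining exponential growth absorbs the slowly varying correction.

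After substituting $m = N_k - j$ the quantity of interest becomes
\[
s_k N_k^{1-\delta} l(N_k) \sum_{m=N}^{n_k} \frac{l(N_k - m)}{m^{1-\delta} l(m) (N_k - m)^\delta}.
\]
Condition (C2) yields $n_k / N_k \to 0$, so for every $m \in [N, n_k]$ the ratio $(N_k - m)/N_k$ lies in $[1/2, 1]$ once $k$ is sufficiently large. By Lemma~\ref{lem:powerslowly}\ref{SV(i)} we then have $l(N_k - m)/l(N_k) \to 1$ uniformly in such $m$, and clearly $(N_k - m)^{-\delta} \geq N_k^{-\delta}$. Hence, for large $k$, the sum is bounded below by a constant multiple of
\[
s_k (l(N_k))^2 N_k^{1-2\delta} \sum_{m = N}^{n_k} \frac{m^{\delta-1}}{l(m)}.
\]

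Next I would invoke Karamata's theorem for partial sums of regularly varying sequences: since $1/l$ is slowly varying and $\delta > 0$,
\[
\sum_{m=N}^{n_k} \frac{m^{\delta-1}}{l(m)} \sim \frac{n_k^{\delta}}{\delta\, l(n_k)}.
\]
Substituting this in, the quantity whose divergence we must establish is bounded below, up to a constant, by
\[
s_k \cdot \frac{(l(N_k))^2}{l(n_k)} \cdot \frac{n_k^\delta}{N_k^{2\delta - 1}}.
\]
Now I split this into its polynomial and slowly varying parts. Using $s_k = 2^{-g_3 k}$, $N_k \asymp 2^{g_1 k}$, $n_k \asymp 2^{g_2 k}$, the polynomial factor equals $2^{c_0 k}$ with $c_0 \coloneqq g_2 \delta - (2\delta - 1) g_1 - g_3$. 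Condition (C3) rearranges to $c_0 > (2g_1 + g_2)\epsilon > 0$, so the polynomial part grows exponentially. For the slowly varying factor, Lemma~\ref{lem:powerslowly}\ref{SV(ii)} tells us that for any $\alpha > 0$, eventually $l(N_k) \geq N_k^{-\alpha}$ and $l(n_k) \leq n_k^{\alpha}$, so $(l(N_k))^2 / l(n_k) \geq 2^{-(2g_1 + g_2)\alpha k}$ for large $k$. Choosing $\alpha = \epsilon / 2$, the net exponent is at least $c_0 - (2g_1 + g_2)\epsilon/2 > (2g_1 + g_2)\epsilon / 2 > 0$, establishing geometric divergence.

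The main obstacle is the slowly varying bookkeeping, since the factors $l(N_k)$, $l(n_k)$, $l(N_k - m)$, and $l(m)$ could in principle ruin the growth rate; condition (C3) is precisely what provides the polynomial slack needed to dominate all of them via Lemma~\ref{lem:powerslowly}\ref{SV(ii)}. Condition (C2) is used to guarantee $n_k = \mathfrak{o}(N_k)$ so that the uniform convergence in Lemma~\ref{lem:powerslowly}\ref{SV(i)} can be invoked, while condition (C1) ensures $N_k \to \infty$ fast enough to be in the regime where the slowly varying estimates apply.
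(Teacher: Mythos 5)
Your argument is correct and follows essentially the same route as the paper's proof: lower-bound the sum by trading every slowly varying factor for an $\epsilon$-power via Lemma~\ref{lem:powerslowly}\ref{SV(ii)}, reduce the inner sum over the window to $n_k^{\delta}$ (up to slowly varying corrections), and let condition (C3) supply exactly the exponential slack $c_0>(2g_1+g_2)\epsilon$ needed to dominate them. The only cosmetic difference is that you evaluate $\sum_{m\le n_k} m^{\delta-1}/l(m)$ sharply via the direct half of Karamata's theorem, where the paper bounds $l(m)$ by $\kappa^{-1}m^{\epsilon}$ and sums the resulting power directly.
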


\begin{proof}
The result follows immediately from combining the following three observations.
\begin{enumerate}[label={(\roman*)},leftmargin=*]
\item Using the facts that $\delta \in (1/2,1)$ and $\epsilon > 0$, that the sequence $(N_{k})_{k \in \mathbb{N}}$ is not bounded above and is strictly monotonically increasing, that $s_{k} \coloneqq 2^{-g_{3}k}$ and that $N$ is a fixed natural number, we have that 
\[
\lim_{k \to +\infty} s_{k} \left(\frac{N_{k}}{N_{k} - N} \right)^{\delta} {N_{k}}^{1 - 2 \delta - 2\epsilon} N^{\delta - \epsilon} = 0.
\]
\item For each $k \in \mathbb{N}$, we have that
\[
\begin{aligned}
s_{k} N_{k}^{1-2\delta - 2\epsilon} n_{k}^{\delta-\epsilon}
&\geq 2^{-g_{3} k} 2^{g_{1}(1- 2\delta - 2\epsilon)k} (2^{g_{2}(\delta - \epsilon)} -1)\\
&= 2^{(g_{1}(1-2\delta-2\epsilon) + g_{2}(\delta - \epsilon) - g_{3})k} - 2^{(g_{1}(1 - 2\delta - 2\epsilon) - g_{3}) k}.
\end{aligned}
\]
Using condition (C3) with the facts that $\delta \in (1/2,1)$, $\epsilon > 0$ and that $g_{1}, g_{2}$ and $g_{3}$ are positive, it follows that
\[
\lim_{k \in \mathbb{N}} s_{k} {N_{k}}^{1-2\delta - 2\epsilon} {n_{k}}^{\delta - \epsilon} = +\infty.
\]
\item There exist constants $\kappa, \xi > 0$ such that, for all $k \in \mathbb{N}$ sufficiently large,
\[
\begin{aligned}
&\sum_{j = N_{k} - n_{k} }^{N_{k}-N} \frac{{N_{k}}^{1-\delta} l(N_{k})}{(N_{k} - j)^{1-\delta}l(N_{k} - j)} \frac{l(j)}{j^{\delta}}\\
&\geq \left(\frac{1}{N_{k} - N}\right)^{\delta} l(N_{k} - N) l(N_{k}) N_{k}^{1-\delta} \e^{-\xi} \sum_{j = N_{k} - n_{k}}^{N_{k}-N} \frac{1}{(N_{k} - j)^{1-\delta}l(N_{k} - j)}\\
&\geq \kappa \e^{-\xi} \left(\frac{N_{k}}{N_{k} - N}\right)^{\delta}N_{k}^{1-2\delta - 2\epsilon} \left(n_{k}^{\delta - \epsilon}-N^{\delta - \epsilon}\right).
\end{aligned}
\]
Here, the first inequality follows from the facts that $l$ is a slowly varying function and that $\lim_{n \to \infty} (N_{k} - n_{k})/N_{k} = 1$ together with Lemma~\ref{lem:powerslowly}~\ref{SV(i)}.  The second inequality follows from Lemma~\ref{lem:powerslowly}~\ref{SV(ii)}, which guarantees  the existence of the constant $\kappa > 0$ such that $\kappa^{-1} n^{\epsilon} \geq l(n) \geq \kappa n^{-\epsilon}$, for all $n \in \mathbb{N}$.
\end{enumerate}
\end{proof}

\begin{proposition}\label{prop:liminfandlimsup}
For an observable $v$ as defined in \eqref{eq:observable}, we have that, on $\overline{A}_{1}$,
\begin{equation}\label{eq:mainthmiii}
\liminf_{n \to +\infty} w_{n}  \widehat{F}_{\alpha}^{n}(v) =  \Gamma_{\delta} \int v \, \mathrm{d} \mu_{\alpha}
\quad \text{and} \quad
\limsup_{n \to +\infty} w_{n}  \widehat{F}_{\alpha}^{n}(v) = +\infty.
\end{equation}
\end{proposition}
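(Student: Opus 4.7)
The plan is to decompose $v$ according to its ``bumps'' and apply Theorem~\ref{thm:MT2011:THM2.1} to each piece. By Lemma~\ref{lem:partIIclaim3} together with the fact that $v$ is constant with value $s_k$ on each $A_j$ for $j \in [N_k - n_k, N_k]$, the functions
\[
\psi_j \coloneqq \widehat{F}_\alpha^{j-1}(v \cdot \mathds{1}_{A_j}) = s_{k(j)} t_j \cdot \mathds{1}_{A_1}
\]
(set to $0$ if $j$ lies outside every bump) belong to $\mathcal{B}_\alpha$, and since $\widehat{F}_\alpha^n(v \cdot \mathds{1}_{A_j})$ is supported in $A_{j-n}$ for $j > n$ it vanishes on $\overline{A}_1$ for $j \geq n+2$. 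Combining this with Theorem~\ref{thm:MT2011:THM2.1} applied to each $\psi_j$ and $\int \psi_j \, \mathrm{d}\mu_\alpha = s_{k(j)} t_j$ yields, on $\overline{A}_1$,
\[
w_n \widehat{F}_\alpha^n(v) = \Gamma_\delta \sum_{j=1}^{n+1} \frac{w_n}{w_{n-j+1}} s_{k(j)} t_j + E_n, \qquad |E_n| \leq \sum_{j=1}^{n+1} \frac{w_n}{w_{n-j+1}} \epsilon_{n-j+1} s_{k(j)} t_j,
\]
where $\epsilon_m \to 0$ is the uniform error rate furnished by Theorem~\ref{thm:MT2011:THM2.1}.

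For the identity $\liminf = \Gamma_\delta \int v \, \mathrm{d}\mu_\alpha$, the direction $\liminf \geq \Gamma_\delta \int v\,\mathrm{d}\mu_\alpha$ I will obtain by positivity: for each fixed $M$, Theorem~\ref{thm:MT2011:THM2.1} applied termwise to $\psi_1, \ldots, \psi_M$ gives $\lim_{n \to \infty} w_n \widehat{F}_\alpha^n(v \cdot \mathds{1}_{\bigcup_{j \leq M} A_j})(x) = \Gamma_\delta \int v \cdot \mathds{1}_{\bigcup_{j \leq M} A_j} \, \mathrm{d}\mu_\alpha$, and letting $M \to \infty$ via Proposition~\ref{prop:L1andLinfty} upgrades this to the desired bound. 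For the reverse inequality I will use the subsequence $n_K \coloneqq 2 N_K$. Condition (C1) gives $g_1 > 1$, so $N_{K+1} > 2 N_K$ for large $K$ and only bumps $k \leq K$ contribute. For any such contributing $j$, $n_K - j + 1 \geq N_K + 1 \to \infty$, so $\epsilon_{n_K - j + 1} \leq \epsilon_{N_K + 1} \to 0$, while \eqref{eq:Karamata} yields $w_{n_K}/w_{n_K - j + 1} \leq w_{2 N_K}/w_{N_K + 1}$ bounded by a constant $C$. Splitting at a cutoff $k_0$, the head converges to $\Gamma_\delta \sum_{k \leq k_0} s_k \sum_j t_j$ (since $w_{n_K}/w_{n_K - j + 1} \to 1$ for fixed $j$), while the tail is dominated by $C \sum_{k > k_0} s_k \sum_j t_j$ which vanishes as $k_0 \to \infty$ by Proposition~\ref{prop:L1andLinfty}; the error satisfies $|E_{n_K}(x)| \leq C \epsilon_{N_K + 1} \int |v| \, \mathrm{d}\mu_\alpha \to 0$. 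This gives $\lim_K w_{n_K} \widehat{F}_\alpha^{n_K}(v)(x) = \Gamma_\delta \int v \, \mathrm{d}\mu_\alpha$.

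For the divergence $\limsup = +\infty$, I will use $n_K \coloneqq N_K$ and, by positivity, keep only contributions from the $K$-th bump with $j \in [N_K - n_K, N_K - N]$ for a fixed $N$ chosen once for all so that $\sup_{m > N} \epsilon_m < \Gamma_\delta / 2$. Each retained summand is then bounded below by $(\Gamma_\delta/2)(w_{N_K}/w_{N_K - j + 1}) s_K t_j$. Substituting the regularly-varying asymptotics $w_m \sim \overline{\Gamma}_\delta m^{1-\delta} l(m)$ and $t_j \sim l(j) j^{-\delta}$, the resulting sum is, up to a positive multiplicative constant, exactly the sequence in Lemma~\ref{lemma:keyLemma} with this value of $N$, which diverges to $+\infty$. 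The main obstacle is this $\limsup$ step: the error rate $\epsilon_m$ is useless at small $m$, which is precisely where the ratios $w_n / w_{n-j+1}$ are largest and drive the divergence, so one must amputate a fixed block of top indices and still recover divergence — this is the content of Lemma~\ref{lemma:keyLemma} and it is here that condition (C3) is consumed. A secondary delicacy is the liminf reverse direction: the geometric growth of the $N_k$ precludes simultaneously sending all ratios $w_n/w_{n-j+1}$ to $1$, forcing the head/tail split that relies decisively on the $\mathcal{L}^1_{\mu_\alpha}$-integrability furnished by Proposition~\ref{prop:L1andLinfty}.
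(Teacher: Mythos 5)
Your proposal is correct, and it splits into two halves of different character relative to the paper. The $\limsup$ argument and the inequality $\liminf_{n} w_{n}\widehat{F}_{\alpha}^{n}(v) \geq \Gamma_{\delta}\int v\,\mathrm{d}\mu_{\alpha}$ follow the paper essentially verbatim: the same decomposition $\widehat{F}_{\alpha}^{n}(v\cdot\mathds{1}_{A_{j}})=\widehat{F}_{\alpha}^{n-j+1}(s_{k(j)}t_{j}\mathds{1}_{A_{1}})$ via Lemma~\ref{lem:partIIclaim3}, the same evaluation along $n=N_{K}$ with a fixed block of top indices amputated, and the same reduction to Lemma~\ref{lemma:keyLemma} (your only elision is the harmless index shift $N_{K}-j$ versus $N_{K}-j+1$, which the paper absorbs into the constant $r$). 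Where you genuinely diverge is in showing that the value $\Gamma_{\delta}\int v\,\mathrm{d}\mu_{\alpha}$ is actually attained as the $\liminf$. The paper argues by contradiction: assuming $\liminf \geq c > \Gamma_{\delta}\int v\,\mathrm{d}\mu_{\alpha}$, it sums the iterates, invokes Karamata's Tauberian theorem, and contradicts pointwise dual ergodicity of the system (via the Darling--Kac property of $\overline{A}_{1}$ and \cite[Proposition 3.7.5]{JA:1997}). You instead exhibit the explicit subsequence $n_{K}=2N_{K}$ along which $w_{n_{K}}\widehat{F}_{\alpha}^{n_{K}}(v)\to\Gamma_{\delta}\int v\,\mathrm{d}\mu_{\alpha}$, using the lacunarity forced by (C1) so that every contributing index $j\leq N_{K}$ leaves the renewal time $n_{K}-j+1\geq N_{K}+1$ large, whence the uniform error of Theorem~\ref{thm:MT2011:THM2.1} is controlled and the ratios $w_{n_{K}}/w_{n_{K}-j+1}$ stay bounded by essentially $2^{1-\delta}$ via \eqref{eq:Karamata}; the head/tail split in $k_{0}$ then closes the argument using $\int|v|\,\mathrm{d}\mu_{\alpha}<\infty$. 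Your route is more elementary and self-contained (no Tauberian theorem, no appeal to pointwise dual ergodicity, and it identifies a concrete subsequence realising the $\liminf$), at the price of leaning on the specific gap structure of $v$; the paper's contradiction argument uses only the already-established lower bound and the Ces\`aro-type limit, so it would survive for observables without such a lacunary structure.
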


\begin{proof}
By Theorem~\ref{thm:MT2011:THM2.1} and Proposition~\ref{prop:conditions_H1_H2}, we have uniformly on $\overline{A}_{1}$ that
\[
\lim_{n \to +\infty} l(n) n^{1-\delta} \mathds{1}_{\overline{A}_{1}} \cdot \widehat{F}_{\alpha}^{n}(\mathds{1}_{A_{1}})
= (\Gamma_{\delta}/\overline{\Gamma}_{\delta}) \mu_{\alpha}(A_{1}) \mathds{1}_{\overline{A}_{1}}
= (\Gamma_{\delta}/\overline{\Gamma}_{\delta}) \mathds{1}_{\overline{A}_{1}}.
\]
Thus, given $\xi > 0$, there exists $N = N(\xi) \in \mathbb{N}$ such that, for all $n \geq N$ on $\overline{A}_{1}$,
\begin{equation}\label{eq:assumptionconvergence_BV}
\frac{\e^{\xi} \Gamma_{\delta} n^{\delta - 1}}{{\overline{\Gamma}_{\delta}}l(n)}
\geq \widehat{F}_{\alpha}^{n}(\mathds{1}_{A_{1}})
\geq \frac{\e^{-\xi} \Gamma_{\delta} n^{\delta - 1}}{{\overline{\Gamma}_{\delta}}l(n)}.
\end{equation}
We will first show the second statement in \eqref{eq:mainthmiii}.  For this, observe that by \eqref{eq:Karamata} it is sufficient to show that, on $A_{1}$,
\[
\limsup_{k \to +\infty} l(N_{k}) {N_{k}}^{1-\delta} \widehat{F}_{\alpha}^{N_{k}}(v)(x) = +\infty.
\]
In order to see this, let $\xi  > 0$ be fixed and let $p = p(\xi) \in \mathbb{N}$ denote the smallest integer for which $n_{p} > N$.  Since $\widehat{F}_{\alpha}$ is a positive linear operator, we have, for all $k > p$, that
\begin{equation}\label{eq:Nk-N25}
l(N_{k}) {N_{k}}^{1-\delta} \widehat{F}_{\alpha}^{N_{k}}(v) 
\geq s_{k} l(N_{k}) {N_{k}}^{1-\delta} \sum_{j = N_{k} - n_{k}}^{N_{k}-N} \widehat{F}_{\alpha}^{N_{k}}(\mathds{1}_{A_{j}}).
\end{equation}
Now, Lemma~\ref{lem:powerslowly}~\ref{SV(i)} implies that $\lim_{n \to +\infty} (n^{1-\delta}l(n))/((n+1)^{1-\delta}l(n+1)) = 1$.  As the sequence $(a_{n})_{n \in \mathbb{N}}$ is positive and since $a_{n} = \delta n^{-1-\delta} l(n)$ the value $r \coloneqq \inf \{ (n^{1-\delta}l(n))/((n+1)^{1-\delta}l(n+1)) \}$ is finite and strictly greater than zero.  Hence, by \eqref{eq:Thatonpetals1}, \eqref{eq:assumptionconvergence_BV} and \eqref{eq:Nk-N25} and the fact that $t_{n} \sim l(n) n^{-\delta}$, we have on $\overline{A}_{1}$ that, for each $k \in \mathbb{N}$ sufficiently large,
\[
\begin{aligned}
l(N_{k}) {N_{k}}^{1-\delta} \widehat{F}_{\alpha}^{N_{k}}(v)
&\geq \e^{-\xi}\frac{\Gamma_{\delta}}{{\overline{\Gamma}_{\delta}}} s_{k} \sum_{j = N_{k} - n_{k}}^{N_{k} - N} \frac{(N_{k}-j)^{1-\delta}l(N_{k}-j)}{(N_{k} -j +1)^{1-\delta}l(N_{k} -j +1)} \frac{N_{{k}}^{1-\delta} l(N_{k})}{(N_{k} - j)^{1-\delta}l(N_{k} - j)} t_{j}\\
&\geq \e^{-2\xi}\frac{\Gamma_{\delta}}{{\overline{\Gamma}_{\delta}}} r s_{k} \sum_{j = N_{k} - n_{k}}^{N_{k} - N} \frac{N_{{k}}^{1-\delta}l(N_{k})}{(N_{k} - j)^{1-\delta}l(N_{k} - j)} \frac{l(j)}{j^{\delta}}.
\end{aligned}
\]
By Lemma~\ref{lemma:keyLemma}, the latter term diverges.

All that remains to show is that the first statement of \eqref{eq:mainthmiii} holds.  For this, observe that, by positivity and linearity of $\widehat{F}$, Theorem~\ref{thm:MT2011:THM2.1},
Proposition~\ref{prop:conditions_H1_H2} and \eqref{eq:Thatonpetals1}, we have on $\overline{A}_{1}$ that, for each $k \in \mathbb{N}$, 
\[
\begin{aligned}
\Gamma_{\delta} \sum_{l=1}^{N_{k}}  \int v \cdot \mathds{1}_{A_{l}} \, \mathrm{d}\mu_{\alpha}
= \Gamma_{\delta} \sum_{m = 1}^{k} s_{m} \hspace{-0.1cm} \sum_{j = N_{m} - n_{m}}^{N_{m}} t_{j}
&= \sum_{m = 1}^{k} s_{m} \hspace{-0.1cm} \sum_{j = N_{m} - n_{m}}^{N_{m}} \hspace{-0.1cm} \liminf_{n \to +\infty} w_{n}\widehat{F}^{n-j+1}(\widehat{F}_{\alpha}^{j+1}(\mathds{1}_{A_{j}}))\\
&\leq \liminf_{n \to +\infty} w_{n} \widehat{F}^{n} \left( \sum_{m = 1}^{k} s_{m} \sum_{j = N_{m} - n_{m}}^{N_{m}} \mathds{1}_{A_{j}} \right)\\
&\leq \liminf_{n \to +\infty} w_{n} \widehat{F}^{n}(v).
\end{aligned}
\]
Since $k \in \mathbb{N}$ was arbitrary, the above inequalities imply that on $\overline{A}_{1}$,
\[
\liminf_{n \to +\infty} w_{n} \widehat{F}^{n}(v) \geq \Gamma_{\delta} \int v \, \mathrm{d}\mu_{\alpha}.
\]
Suppose that the latter inequality is strict, namely, suppose that there exists a constant $c > 0$ such that on $\overline{A}_{1}$,
\[
\liminf_{n \to +\infty} w_{n} \widehat{F}^{n}(v) \geq c > \Gamma_{\delta} \int v \, \mathrm{d}\mu_{\alpha}.
\]
This assumption together with \eqref{eq:Karamata} implies that, given $\xi > 0$, there exists $M = M(\xi) \in \mathbb{N}$ such that, for all $n \geq M$ and $x \in \overline{A}_{1}$,
\[
\widehat{F}^{n}(v)(x) \geq \e^{-\xi} {\overline{\Gamma}_{\delta}}^{-1} c n^{\delta - 1}/l(n).
\]
Thus, by Karamata's Tauberian Theorem for power series \cite[Corollary 1.7.3]{BGT:1987}, it follows that, for all $n \geq M$ and $x \in \overline{A}_{1}$,
\[
\begin{aligned}
\sum_{k = 1}^{n} \widehat{F}^{k}(v)(x)
&\geq \sum_{k = 1}^{M} \widehat{F}^{k}(v)(x) + \sum_{k = M+1}^{n} \e^{-\xi} {\overline{\Gamma}_{\delta}}^{-1} c k^{\delta - 1}/l(k)\\
&\geq \sum_{k = 1}^{M} \widehat{F}^{k}(v)(x) + \e^{-\xi} \overline{\Gamma}_{1-\delta} {\overline{\Gamma}_{\delta}}^{-1} c n^{\delta}/l(n).
\end{aligned}
\]
Hence,
\[
\liminf_{n \in +\infty} \frac{w_{n}}{n} \sum_{k = 1}^{n} \widehat{F}^{n}(v)(x) = 
\liminf_{n \to +\infty}n^{-\delta} l(n) \overline{\Gamma}_{\delta} \sum_{k = 1}^{n} \widehat{F}^{n}(v)(x)
\geq \overline{\Gamma}_{1-\delta} \delta^{-1} c
> \overline{\Gamma}_{1-\delta} \Gamma_{\delta} \int v \, \mathrm{d} \mu_{\alpha}.
\]
This is a contradiction, since by \eqref{eq:Karamata} and by combining Theorem~\ref{thm:MT2011:THM2.1} with Karamata's Tauberian Theorem for power series \cite[Corollary 1.7.3]{BGT:1987}, we have that the set $\overline{A}_{1}$ is a \mbox{Darling-Kac} set and therefore, by \cite[Proposition 3.7.5]{JA:1997}, the $\alpha$-Farey system is pointwise dual ergodic, meaning that, for $\mu_{\alpha}$-almost every $x \in [0, 1]$, we have that
\[
\lim_{n \in +\infty} \frac{w_{n}}{n} \sum_{k = 1}^{n} \widehat{F}^{n}(v)(x) = \overline{\Gamma}_{1-\delta} \Gamma_{\delta} \int v \, \mathrm{d} \mu_{\alpha}.
 \]
\end{proof}

\begin{proof}[Proof of Theorem~\ref{thm:main1}~\ref{iii}]
This follows from Propositions~\ref{prop:L1andLinfty}, \ref{prop:Balpha}, \ref{prop:summand} and \ref{prop:liminfandlimsup}.
\end{proof}

\end{document}